\documentclass[11pt]{amsart}

\usepackage[utf8]{inputenc}

\usepackage{amsthm}
\usepackage{amssymb}
\usepackage{amsfonts}
\usepackage{cite}
\usepackage{enumerate}
\usepackage{fullpage}
\usepackage{hyperref}
\usepackage{bbm}
\usepackage{color}
\usepackage{graphicx}
\usepackage{cleveref}

\newtheorem{theorem}{Theorem}[section]
\newtheorem{lemma}[theorem]{Lemma}
\newtheorem{prop}[theorem]{Proposition}
\newtheorem{definition}[theorem]{Definition}
\newtheorem{claim}[theorem]{Claim}

\DeclareMathOperator*{\argmax}{argmax}

\def\le{\leqslant}
\def\leq{\leqslant}
\def\ge{\geqslant}
\def\geq{\geqslant}

\def\P{\mathbb{P}}
\def\N{\mathbb{N}}
\def\eps{\varepsilon}
\usepackage[left]{lineno}

\title{Ramsey goodness of trees in random graphs}
\author{Pedro Araújo} 
\author{Luiz Moreira}
\author{Matías Pavez-Signé}

\linespread{1.2}

\address{Institute of Computer Science of the Czech Academy of Sciences, Pod Vodárenskou věží, 2
182 07 Praha 8, Czech Republic}
\email{pedrocampos.araujo@gmail.com}
\address{Departamento de Matemática, PUC-Rio, Rua Marquês de São Vicente, 225, Gávea, Rio de Janeiro 22451-900, Brazil}
\email{lzplm@gmail.com}
\address{Center for Mathematical Modelling (CNRS IRL-2807), Universidad de Chile, Beauchef 851, Santiago, Chile}
\email{mpavez@dim.uchile.cl}
\thanks{This research of PA and LM was partially supported by CNPq, and the research of MPS was partially supported by ANID	Doctoral scholarship ANID-PFCHA/Doctorado Nacional/2017-21171132 and the project 
UCH-1566 ``Consolidaci\'on de la internacionalizaci\'on de la investigaci\'on y postgrado de la Universidad de Chile".}

\begin{document}

\maketitle
\begin{abstract} For a graph $G$, we write $G\rightarrow \big(K_{r+1},\mathcal{T}(n,D)\big)$ if every blue-red colouring of the edges of $G$ contains either a blue copy of $K_{r+1}$, or a red copy of each tree with $n$ edges and maximum degree at most $D$. In 1977, Chvátal proved that for any integers $r,n,D \ge 2$, $K_N \rightarrow \big(K_{r+1},\mathcal{T}(n,D)\big)$ if and only if $N \ge rn+1$. We prove a random analogue of Chvátal's theorem for bounded degree trees, that is, we show that for each $r,D\ge 2$ there exist constants $C,C'>0$ such that if $p \ge  C{n}^{-2/(r+2)}$ and $N \geq rn + C'/p$, then
\[G(N,p) \rightarrow \big(K_{r+1},\mathcal{T}(n,D)\big)\]
with high probability as $n\to \infty$. The proof combines a stability argument with the embedding of trees in expander graphs. Furthermore, the proof of the stability result is based on a sparse random analogue of the Erd\H{o}s--S\'os conjecture for trees with linear size and bounded maximum degree, which may be of independent interest. 
\end{abstract}

\section{Introduction}
Ever since the seminal work of Erd\H{o}s and R\'enyi~\cite{erdHos1960evolution}, the study of the binomial random graph has played a central role in combinatorics. In this paper, we study the Ramsey properties of the Erd\H{o}s--R\'enyi random graph, continuing a line of research that was initiated in the 1980s by Frankl and R\"odl~\cite{franklrodl} and by \L uczak, Ruci\'nski, and Voigt~\cite{voigt}. Let us write $G \to (H_1,H_2)$ to denote that every blue-red colouring of the edges of $G$ contains either a blue copy of $H_1$ or a red copy of~$H_2$ (if $H_1 = H_2$ then we write $G \to H$). An important early breakthrough by R\"odl and Ruci\'nski \cite{RR1,RR2} established the following threshold result for fixed non-acyclic graphs $H$:
$$\lim_{N\to\infty}\P\big( G(N,p) \rightarrow H \big) = 
\begin{cases}
1 & \text{if } {p \gg N^{-1/m_2(H)}},\\
0&\text{if } p \ll N^{-1/m_2(H)},
\end{cases}$$
where $m_2(H) = \max\big\{ \frac{e(H')-1}{v(H')-2}: H'\subseteq H \text{ with } v(H')\geq 3 \big\}$. A corresponding result for hypergraphs was obtained by Friedgut, R\"odl and Schacht~\cite{friedgut2010ramsey} and independently by Conlon and Gowers~\cite{conlongowers}, and the $1$-statement of an asymmetric version (conjectured by Kreuter and Kohayakawa~\cite{yoshi} in 1997) was recently proved by Mousset, Nenadov, and Samotij~\cite{mousset2018towards}.

Ramsey properties of random graphs involving sparse graphs have also attracted significant attention in recent years. To give just two examples, Letzter~\cite{letzter2016path} proved that if $\eps > 0$ and $pn \to \infty$, then $G\big((3/2 + \eps)n, p\big) \to P_n$ with high probability (the constant $3/2$ is best possible), and Kohayakawa, Mota and Schacht~\cite{kohayakawa2019monochromatic} proved that $\big( \frac{\log N}{N} \big)^{1/2}$ is the threshold for the event that for any two-colouring of the edges of $G(N,p)$, there exist two monochromatic trees that partition the vertex set. 

In this paper we will be interested in the problem of extending to the setting of sparse random graphs a theorem of Chv\'atal~\cite{chvatal1977tree} from 1977, which states that if $r \in \N$, and $T$ is a tree with $n$ edges, then
\begin{equation*}\label{eq:Chvatal}
K_N \to (K_{r+1},T) \qquad \Leftrightarrow \qquad N \ge rn + 1.
\end{equation*}
The necessity of the lower bound on $N$ is easy to see, and (as was first observed by Burr~\cite{burr1981ramsey}) holds in significantly greater generality. To be precise, if $H$ is a connected graph, $F$ is a graph with $\sigma(F) \le |H|$, where $\sigma(F)$ is the minimum size of a colour class in a proper $\chi(F)$-colouring of $F$, and $N < \big( \chi(F) - 1 \big) \big( |H| - 1 \big) + \sigma(F)$, then $K_N \nrightarrow (F,H).$ Indeed, to see this it suffices to consider $\chi(F)-1$ disjoint red cliques of size $|H|-1$, and one additional disjoint red clique of size $\sigma(F)-1$. A (connected) graph $H$ is said to be \emph{Ramsey $F$-good} (or just \emph{$F$-good}) if $K_N \to (F,H)$ whenever $N \ge \big( \chi(F) - 1 \big) \big( |H| - 1 \big) + \sigma(F)$. The systematic study of Ramsey goodness was initiated by Burr and Erd\H{o}s~\cite{burrerdos} in 1983. 

As far as we are aware, the problem of Ramsey goodness in random graphs was first studied only very recently, by the second author~\cite{luiz}, who considered the case in which $F$ is a clique and $H$ is a path. The main results of~\cite{luiz} identified two different thresholds for the event that $G(N,p) \to (K_{r+1},P_n)$, for different values of $N$. More precisely, it was proved there that if $p \gg n^{-2/(r + 2)}$ and $t \gg 1/p$, then $G\big( rn + t, p \big) \to \big( K_{r + 1}, P_n \big),$ while if $p \gg n^{-2/(r + 1)}$ and $t = \Omega(n)$ then $G\big( rn + t, p \big) \to \big( K_{r + 1}, P_n \big),$ in both cases with high probability as $n \to \infty$. These results are sharp in the sense that, with high probability, $G(rn + t, p) \nrightarrow  (K_{r + 1}, P_n)$ in three different settings.  First, if $p\in (0,1)$ and $t\ll 1/p$, then one can partition $V(G(N,p))=V_0\cup V_1\cup\cdots \cup V_r$ such that $|V_0|=t$ and $e(V_0,V_r)=0$. This is possible since, with high probability, every set of size $o(1/p)$ has $o(n)$ external neighbours in $G(N,p)$. Then we can colour the edges in red if and only if they have both endpoints in the same part, without creating a blue $K_{r+1}$ or any red component with more than $n$ vertices. Second, for $n^{-2/(r + 1)} \ll p \ll n^{-2/(r + 2)} $, one can show that there are values of $t\gg 1/p$ such that $G\big( rn + t, p \big) \nrightarrow \big( K_{r + 1}, P_n \big)$. Finally, if $p \ll n^{-2/(r + 1)}$ and $t = O(n)$, then, with high probability, $G(N,p)$ has $o(n)$ copies of $K_{r+1}$, whose edges can be all coloured in red without creating any red component with more than $n$ vertices, see~\cite{luiz} for the details.

Our main theorems generalise the results of~\cite{luiz} from paths to arbitrary bounded degree trees. Let us denote by $\mathcal{T}(n,D)$ the class of all trees with $n$ edges and maximum degree at most $D$. We write $G\rightarrow (K_{r+1}, \mathcal{T}(n,D))$ to denote that $G \rightarrow (K_{r+1},T)$ for every $T\in \mathcal{T}(n,D)$.

\begin{theorem}\label{stab}
For each $r,D \geq 2$, there exist $C,C' > 0$ such that the following holds. If 
$$p \geq Cn^{-2/(r+2)} \qquad \text{and} \qquad N \geq rn + C'/p,$$ 
then $G(N,p) \rightarrow \big(K_{r+1},\mathcal{T}(n,D)\big)$ with high probability as $n \to \infty$. 
\end{theorem}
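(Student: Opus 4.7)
The plan is to combine a stability argument with an embedding theorem for bounded-degree trees into sparse expanders. Fix $r,D\geq 2$ and consider a blue-red colouring of $E(G(N,p))$ containing no blue $K_{r+1}$; write $R$ for the red subgraph and $T\in\mathcal{T}(n,D)$ for an arbitrary target tree.

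The first ingredient is a sparse embedding tool. For $p\geq Cn^{-2/(r+2)} \gg \log n/n$, any subgraph $H\subseteq G(N,p)$ on at least $n+1$ vertices satisfying a Friedman--Pippenger-type expansion condition (say $|N_H(S)\setminus S|\geq (D+1)|S|$ for every $S\subseteq V(H)$ with $|S|\leq n/(CD)$) contains a copy of every $T\in\mathcal{T}(n,D)$. Such statements follow from adapting the classical Friedman--Pippenger argument to typical subgraphs of $G(N,p)$ via Chernoff and union-bound estimates for expansion.

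The core of the proof is a stability dichotomy: w.h.p.\ over $G=G(N,p)$, every admissible colouring falls into one of two cases. In the \emph{non-extremal} case (a), there exists $U\subseteq V(G)$ with $|U|\geq n+1$ such that $R[U]$ is an expander as above, and the embedding tool is applied directly. In the \emph{near-extremal} case (b), there is a partition $V(G)=V_0\sqcup V_1\sqcup\cdots\sqcup V_r$ with $|V_0|=o(1/p)$ and $|V_i|\leq n$ for $i\geq 1$, and some small $\eps>0$, such that $R[V_i]$ contains all but an $\eps$-fraction of the edges of $G[V_i]$ for each $i$, while $R$ contains only an $\eps$-fraction of the $G$-edges between distinct parts. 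Here the hypotheses $N\geq rn+C'/p$ and $|V_0|=o(1/p)$ force, by pigeonhole, some $V_{i^*}$ with $|V_{i^*}|\geq n+\Omega(1/p)$; standard Chernoff estimates then show that $G[V_{i^*}]$ is a strong expander with room to spare, and removing the small red deficit preserves the required expansion, so the embedding tool applied to $R[V_{i^*}]$ produces the desired red $T$.

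The main obstacle is establishing the stability dichotomy itself, as no classical result transfers directly to the sparse random setting. Following the abstract, the plan is to deduce it from a sparse random Erd\H{o}s--S\'os-type lemma for $\mathcal{T}(n,D)$ with $n$ linear in $N$: namely, that w.h.p.\ over $G(N,p)$, any subgraph with relative edge density strictly exceeding the Tur\'an density $1-1/r$ already contains every $T\in\mathcal{T}(n,D)$. Failure of case (a) would then force, via sparse Tur\'an counting applied to the blue graph and a sparse $K_{r+1}$-stability result, the near-partition of case (b). Proving the sparse Erd\H{o}s--S\'os statement at the critical threshold $p\approx n^{-2/(r+2)}$ is the most delicate step and will likely rely on container-type methods or sparse regularity, combined with iterated pruning of low-degree vertices to produce the expander to which the embedding tool can be applied.
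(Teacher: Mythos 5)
Your overall skeleton (stability dichotomy via a sparse Erd\H{o}s--S\'os/Erd\H{o}s--Simonovits argument, then embedding into the red expander on the largest part) matches the paper's strategy, but the embedding step contains a genuine gap that the paper spends all of Section~\ref{sec:expanders} repairing. Your claimed tool --- that an expansion condition of Friedman--Pippenger type, $|N_H(S)\setminus S|\ge (D+1)|S|$ for all $|S|\le n/(CD)$, suffices to embed every $T\in\mathcal T(n,D)$ into a host on $n+1$ vertices --- is false as stated: Friedman--Pippenger only yields trees of size at most $|V(H)|/(2D+2)$, i.e.\ a constant factor smaller than the host. To embed an $n$-edge tree into a host with only $O(1/p)$ spare vertices one needs Haxell-type condition~(\ref{hax:2}) of Theorem~\ref{haxxel}: every set $X$ of some intermediate size $m$ must have $|N(X)|\ge n+D|X|+1$, i.e.\ its non-neighbourhood must be smaller than the spare room $O(1/p)$. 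This fails in the sparse random setting: a set of size $\Theta(1/p)$ in $G(N,p)$ typically has $\Theta(N)$ non-neighbours, and even after pruning, the adversarially chosen red graph may contain two disjoint sets of sizes $\omega(1/p)$ and $n/\log^4 n$ with no red edge between them. So ``standard Chernoff estimates'' do not give ``room to spare''; the embedding cannot be closed by a generic expansion lemma.

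The paper's fix is two-pronged: trees with at most $n/\log^4 n$ leaves are handled by Montgomery's absorption result (Theorem~\ref{mont}), while trees with many leaves require Theorem~\ref{manyleaves}, whose proof first selects a random set $W$ of size $\Theta(m_2)$ such that every $m_1$-set has fewer than $m_2$ non-neighbours in $W$, then embeds $T$ minus its leaves so that all parents of leaves land in $W$ (via the $m$-good embedding machinery of Lemmas~\ref{goodhaxxel} and~\ref{choosingwhere}), and finally completes the leaves by Hall's theorem. Without some substitute for this step your argument does not go through. A secondary, smaller point: in your case (b) you assert both $|V_i|\le n$ for all $i\ge 1$ and that pigeonhole yields $|V_{i^*}|\ge n+\Omega(1/p)$, which is contradictory as written; the paper instead proves (Theorem~\ref{finalstab}) that the only obstruction is a partition with $|V_0|\le C/p$ and $|V_i|\le n+C/p$ and all cross-edges blue, which is impossible by counting once $N\ge rn+(r+1)C/p$. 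Your sketch of the stability dichotomy itself is in the right spirit (the paper indeed deduces it from the resilience Theorem~\ref{resilience} plus the Conlon--Gowers/Schacht stability theorem), though the refinement to an exact partition with $e_R(V_i,V_j)=0$ and the relocation of $V_0$ via Janson's inequality also need to be carried out.
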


As mentioned above, it follows from the results of~\cite{luiz} that the bound on $N$ is sharp up to the value of $C$, and the bound on $p$ is sharp up to a the value of $C'$. For smaller values of $p$ we obtain the following bound. 

\begin{theorem}\label{nstab}
For every $r,D\geq2$ and $\varepsilon>0$, there exists $C > 0$ such that the following holds. If 
\[p \geq  Cn^{-2/(r+1)} \qquad \text{and} \qquad N \geq rn + \varepsilon n,\]
then $G(N,p) \rightarrow \big(K_{r+1},\mathcal{T}(n,D)\big)$ with high probability as $n \to \infty$. 
\end{theorem}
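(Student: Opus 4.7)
The plan is to prove Theorem~\ref{nstab} by combining a structural analysis of blue-$K_{r+1}$-free colourings of $G(N,p)$ with a tree embedding in an auxiliary red expander, sidestepping the stability step used for Theorem~\ref{stab}: the linear surplus $\varepsilon n$ in the bound on $N$ supplies enough slack. Fix $T \in \mathcal{T}(n,D)$ and a blue-red colouring of $E(G)$, where $G := G(N,p)$, with no blue $K_{r+1}$; we need to produce a red copy of $T$.

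First I would collect the pseudo-random properties that $G$ satisfies w.h.p.\ at density $p \ge Cn^{-2/(r+1)}$: concentration of vertex degrees around $Np$, of edge counts between pairs of linear-sized subsets, and of counts of $K_r$'s inside typical neighbourhoods. The exponent $-2/(r+1)$ matches the $2$-density $m_2(K_r)=(r+1)/2$, so every large subset of $G$ contains $K_r$'s in roughly the expected number, and by choosing $C$ large each vertex is in as many copies of $K_{r+1}$ as we like.

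The heart of the proof is to locate a subset $V' \subseteq V(G)$ with $|V'|\ge n + \Omega(\varepsilon n)$ such that the red graph restricted to $V'$, call it $R[V']$, is an expander of Friedman-Pippenger type: every small $S \subseteq V'$ satisfies $|N_R(S) \cap V'| \ge (D+1)|S|$. I would construct $V'$ by iteratively discarding vertices of low red-degree into the current candidate set. If some $v$ has very small red degree into $V'$, then almost all of its $G$-neighbours in $V'$ are blue, and by the counting from the previous step these blue neighbours contain many copies of $K_r$ whose edges are forced---through an iterated Chvátal-style induction on $r$---to be blue, yielding a blue $K_{r+1}$ through $v$, a contradiction. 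Since each discarded vertex can be charged to some near-extremal blue configuration, only $o(\varepsilon n)$ vertices are removed and $V'$ retains the required size.

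With $V'$ and the expansion in hand, a standard expander-based tree embedding lemma for bounded-degree trees places a red copy of $T$ inside $R[V']$, completing the proof. The principal obstacle is the extraction of a blue $K_{r+1}$ from a non-expanding vertex in the peeling step: at $p = Cn^{-2/(r+1)}$ the blue neighbourhood of $v$ sits essentially at the Kohayakawa-Łuczak-Rödl threshold for $K_r$, so direct transference is unavailable and the induction must count $K_r$-extensions by hand, carefully controlling how many $K_r$'s can be ``blocked'' by a red edge and ensuring that the error terms allow the peeling to stop before it consumes more than $o(\varepsilon n)$ vertices. Getting this count tight enough to be compatible with both the expansion constant needed for Friedman-Pippenger and the reservoir that the tree embedding requires is where most of the work will be.
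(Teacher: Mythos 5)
Your proposal has two genuine gaps, and the second is fatal to the whole strategy. First, the expansion you aim for is too weak for the embedding you need: a Friedman--Pippenger condition $|N_R(S)\cap V'|\ge (D+1)|S|$ for small $S$ only embeds trees with at most $|V'|/(2D+2)$ vertices, so a set $V'$ of size $n+O(\varepsilon n)$ cannot host a tree with $n$ edges this way. You would need Haxell's two-level condition (Theorem~\ref{haxxel}), whose second clause demands $|N(X)|\ge n+D|X|+1$ for sets $X$ of intermediate size --- a property about \emph{pairs of large sets}, not about individual vertex degrees, so a degree-based peeling cannot certify it.

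Second, and more seriously, the contradiction in your peeling step does not exist. If $v$ has low red degree into $V'$, then the edges \emph{from $v$} to most of $N(v)\cap V'$ are blue, but nothing forces any edge \emph{inside} that blue neighbourhood to be blue: the adversary may colour $G[N_B(v)]$ entirely red, in which case there is no blue $K_{r+1}$ through $v$ and no contradiction. Worse, even if those internal edges were adversarially coloured rather than all red, extracting a blue $K_r$ from a two-coloured set of only $\Theta(pN)=\Theta(n^{(r-1)/(r+1)})$ vertices at density $p=Cn^{-2/(r+1)}$ is exactly the sparse Ramsey transference problem you flag as "most of the work" --- it is not a computation one can do "by hand," and at this density it is false in general. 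The paper's proof avoids single-vertex neighbourhoods altogether: Lemma~\ref{weaklyclique} shows that if the red graph is not $\mathcal{T}(n,D)$-universal then a failure of \emph{weak} expansion produces, after $r$ iterations, $r+1$ pairwise disjoint sets of size $m=\Theta(\varepsilon n)$ with \emph{all} edges between them blue; Janson's inequality (Lemma~\ref{Janson}) applied to these linear-sized sets then yields a canonical blue $K_{r+1}$, and it is precisely the linear size of the sets that makes the Janson computation close at $p\ge Cn^{-2/(r+1)}$. To repair your argument you would need to replace the vertex-by-vertex peeling with this set-level dichotomy (weak expander versus a blue "complete $(r+1)$-partite pattern" on linear sets) and replace Friedman--Pippenger by Haxell's theorem.
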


In Section~\ref{sec:nstab}, we prove a stronger version of Theorem~\ref{nstab}, with a more accurate bound on $N$ and also allowing $D$ to be a function of $p$ (see Theorem~\ref{nstabv2}). We will prove Theorem~\ref{nstab} by iteratively applying a theorem due to Haxell~\cite{haxell} to find either red copies of every tree in $\mathcal{T}(n,D)$, or $r+1$ large disjoint sets with only blue edges between them. The result will then follow by a straightforward application of Janson's inequality. The proof of Theorem~\ref{stab} is significantly more challenging, and is based on a stability argument. One of the key steps is to prove that the random graph not only contains all large bounded degree trees, but is also resilient with respect to this property.

 Resilience is a measure of how much one has to perturb a graph in order to destroy a given property of it (see e.g.~\cite{botsurvey} for a discussion on resilience in the random graph) and it is a convenient way of phrasing extremal problems in general settings. For example, a classical result of Komlós, Sárközy and Szemerédi\cite{KSS1995} says that given $\delta>0$ and $n$ sufficiently large, every $n$-vertex graph $G$ with $\delta(G)\geq (1/2+\delta)n$ is universal\footnote{Given a family of graphs $\mathcal{F}$ and a graph $G$, we say that $G$ is $\mathcal{F}$-universal if $G$ contains every graph in $\mathcal{F}$ as a subgraph.} for the class of spanning trees with bounded degree. In other words, one can say that even if an adversary deletes a $(1/2-\delta)$-proportion of the edges incident at each vertex of $K_n$, the resulting graph is still universal for the class of spanning trees with bounded degree. Balogh, Csaba and Samotij~\cite{Samotij} proved that the same happens in the random graph for the class of almost spanning trees with bounded degree, provided that $p\ge C/n$ for some large constant $C$. That is, they showed that, with high probability, any subgraph of $G(n,p)$ obtained by deleting at most a $(1/2-o(1))$-proportion of the edges incident to each vertex of $G(n,p)$ is $\mathcal T(n-o(n),D)$-universal.  

One of the main features introduced in~\cite{Samotij} was an embedding technique for trees in bipartite expander graphs which works well together with the sparse regularity lemma. We combine these tools with the approach of Besomi, Stein and the third author~\cite{BPS3} to the Erd\H os--Sós Conjecture\footnote{The Erd\H os--S\'os Conjecture~\cite{Erdos64} from 1964 states that, given $k\in\mathbb N$, every graph with average degree greater than $k-1$ must contain a copy of each tree with $k$ edges. }, for bounded degree trees and dense host graphs, to obtain the following ``global" resilience
result.

\begin{theorem}\label{resilience}For every $D \geq 2$ and $\delta,\varrho\in(0,1)$, there exists $C > 0$ such that if $p\geq C/N$, then $G=G(N,p)$, with high probability, has the following property. Every subgraph $G' \subseteq G$ with $e(G')\geq \left(\varrho+\delta \right)e(G)$ is $\mathcal{T}(\varrho N,D)$-universal.\end{theorem}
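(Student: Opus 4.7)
Following the strategy announced in the paper, the plan is to combine the sparse regularity lemma, the Besomi--Pavez-Sign\'e--Stein approach to the Erd\H{o}s--S\'os conjecture for bounded-degree trees, and the sparse bipartite tree-embedding lemma of Balogh, Csaba and Samotij. Throughout, I would condition on $G=G(N,p)$ having the typical properties available at $p\geq C/N$: the sparse regularity lemma applies inside every subgraph, and bipartite subgraphs of $G$ on linear-sized vertex classes have density $\Theta(p)$ and satisfy linear expansion.

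Fix constants $\varepsilon\ll\eta\ll\delta,\varrho,1/D$ and apply a sparse regularity lemma to $G'\subseteq G$. This produces an equitable partition $V_0,V_1,\dots,V_k$ with $|V_0|\leq\varepsilon N$ in which all but an $\varepsilon$-fraction of the pairs $(V_i,V_j)$ are $(\varepsilon,p)$-regular. Let $R$ be the reduced graph on $[k]$ whose edges are those pairs that are $(\varepsilon,p)$-regular with relative density at least $\eta$ in $G'$. A standard double count shows that the edges of $G'$ not covered by $R$ contribute at most $(\eta+O(\varepsilon))p\binom{N}{2}$ to $e(G')$, and so the hypothesis $e(G')\geq(\varrho+\delta)e(G)$ yields $e(R)\geq(\varrho+\delta/2)\binom{k}{2}$; in particular the average degree of $R$ exceeds $\varrho k$.

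Now fix $T\in\mathcal{T}(\varrho N,D)$. Since $R$ is an $O(1)$-vertex graph of average degree above $\varrho k$, a dense Erd\H{o}s--S\'os-type argument for bounded-degree trees (in the style of Besomi--Pavez-Sign\'e--Stein) applied to $R$ produces a graph homomorphism $\phi\colon V(T)\to V(R)$ placing at most $(1-\gamma)|V_i|$ vertices of $T$ into each cluster $V_i$, for some $\gamma=\gamma(\delta,\varrho,D)>0$. Concretely, one locates a suitable spanning structure inside a large component of $R$ into whose blow-up $T$ embeds, using the bounded-degree hypothesis on $T$ to slice it into small subtrees that are distributed across $R$ while respecting the capacity bound $(1-\gamma)|V_i|$ on each cluster.

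Finally, I would lift $\phi$ to an honest embedding $\psi\colon V(T)\to V(G')$ with $\psi(v)\in V_{\phi(v)}$ using the sparse bipartite tree-embedding lemma of Balogh--Csaba--Samotij, which embeds a bounded-degree tree between two vertex classes joined by an $(\varepsilon,p)$-super-regular bipartite graph with linear expansion. After a cleaning step that ensures super-regularity on each edge of $R$, one processes $V(T)$ in a BFS order guided by $\phi$: the capacity slack $\gamma$ keeps each cluster from saturating, while $(\varepsilon,p)$-regularity combined with the typical expansion of $G$ ensures that the common neighbourhood of the already-embedded parents of the next vertex is linearly large, so the partial embedding always extends. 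The main obstacle is precisely this lifting step: unlike in the dense setting, no black-box blow-up lemma is available for sparse regular pairs, so the embedding must be carried out by hand and remain valid down to $p=\Theta(1/N)$. In particular, the few high-demand vertices of $T$ (whose bounded neighbourhood would collectively overload a cluster) together with vertices forced into exceptional positions must be reserved in advance and absorbed at the end using the abundance of leaves in $T$, and synchronising these reservations with the cluster capacities from the previous step is the technical heart of the proof.
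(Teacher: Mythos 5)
The overall framework you propose (sparse regularity, an allocation of $T$ to the reduced graph, then a lift via the Balogh--Csaba--Samotij bipartite embedding lemma) matches the paper's, but there is a genuine gap at the step you treat as a black box: the claim that, because $R$ has average degree above $\varrho k$, ``a dense Erd\H{o}s--S\'os-type argument'' produces a capacity-respecting homomorphism $\phi\colon V(T)\to V(R)$. This allocation is precisely the hard part of the theorem and does not follow from any off-the-shelf result: the Erd\H{o}s--S\'os conjecture is open, and its known approximate forms do not hand you a homomorphism of a tree of total size $\varrho N$ into the blow-up of a $k$-vertex graph of average degree about $\varrho k$ with no cluster overflowing. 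The paper's proof is devoted almost entirely to building the structure that makes this possible: it passes to a subgraph $R_0\subseteq R$ with average degree at least $(\varrho+\delta/3)k$ \emph{and} minimum degree at least half that, picks a high-degree cluster $X$, decomposes $N(X)$ into a matching $\mathcal{M}$ plus an independent set $\mathcal{Y}$ whose outside neighbourhood forms a bipartite graph $\mathcal{H}$, cuts $T$ into tiny rooted subtrees whose roots all lie in one colour class at pairwise even distance, embeds every root into $X$, and assigns each subtree to a single edge of $\mathcal{M}\cup E(\mathcal{H})$ by a greedy procedure that balances the two sides of each matching edge and always consumes $\mathcal{Y}$ at least as fast as $\mathcal{Z}$ (ordering the subtrees by the imbalance of their colour classes). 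Without an argument of this kind, the homomorphism $\phi$ you invoke simply is not known to exist.

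A secondary point: you identify the lifting step as the technical heart and worry about common neighbourhoods, super-regularity, and absorbing high-demand vertices via leaves at the end. In fact, once the allocation is done the lift is comparatively routine: each small subtree is embedded into one regular pair (first refined to a bipartite expander), each new tree vertex has a single already-embedded neighbour so no common-neighbourhood control is needed, and since $|T|=\varrho N+1$ with $\varrho+\delta\le 1$ there is linear slack in every cluster, so no absorption is required. Absorption via leaves does appear in the paper, but for the genuinely near-spanning embedding problem arising in the proof of the stability theorem, not here.
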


Theorem~\ref{resilience} is a consequence of a stronger result in which $G(N,p)$ can be replaced by a pseudorandom graph. More precisely, we only ask that the number of edges between any pair of disjoint sets of linear size is roughly what one would expect in $G(N,p)$. This result can be viewed as an approximate random analogue of the Erd\H os--Sós conjecture for bounded degree trees of linear size. We point out that Theorem~\ref{resilience} is sharp in the following senses. The value of $p$ is best possible, up to a constant factor, since the largest connected component of $G(N,p)$ is sublinear when $p\ll 1/N$. Moreover, for an integer $r\ge 2$ and $\varrho=1/r$, the constant $\varrho$ cannot be improved. Indeed, one can partition the vertex set into $r+1$ parts, one with at most $r$ vertices and the remaining parts having the same size and thus with fewer than $N/r$ vertices. With high probability, the subgraph $G'\subseteq G(N,p)$ obtained by removing edges between parts has $(1/r-o(1))e(G(N,p))$ edges but every connected component of $G'$ has less than $N/r$ vertices.

The remainder of the paper is organised as follows. In Section~\ref{sec:outline} we give an outline of the proofs of Theorems~\ref{stab} and~\ref{resilience}. In Section~\ref{sec:expanders} we state a series of results regarding tree embeddings in expander graphs, and then we prove Theorem~\ref{nstab} in Section~\ref{sec:nstab}. In Section~\ref{sec:tools} we recall the sparse regularity lemma and some facts about the random graph. We prove Theorem~\ref{resilience} in Section~\ref{sec:resilience}, and then, putting everything together, we prove Theorem~\ref{stab} in Section~\ref{sec:stab}. 

\section{Overview}\label{sec:outline}
In this section we give a rough sketch of the proofs of Theorems~\ref{stab} and~\ref{resilience}.

\subsection{The proof of Theorem~\ref{resilience}} We will use the so-called \textit{regularity method} for sparse graphs. Let $G'\subseteq G(N,p)$ be a graph with $e(G')\ge (\varrho+\delta)e(G(N,p))$. Using the sparse regularity lemma (see Section~\ref{sec:tools}) one finds a regular partition of $V(G')$ such that its corresponding reduced graph $R$ has edge density at least $\varrho+\delta/2$. To avoid confusion, we will refer to the vertices of $R$ as clusters and we set $k=|V(R)|$ for the number of clusters. By removing vertices from $R$ with less than $(\varrho+\delta/2)k/2$ neighbours, one by one, we can find an induced subgraph $R'\subseteq R$ with average degree at least $(\varrho+\delta/2)k$ and minimum degree at least $(\varrho+\delta/2)k/2$.

The lower bound on the average degree of $R'$ implies that there is a cluster $X\in V(R')$ such that $|N_{R'}(X)|\ge (\varrho+\delta/2)k$. We can partition $N_{R'}(X)$ into a matching $\mathcal{M}$ and an independent set $\mathcal Y$ so that every cluster in $\mathcal Y$ has a large neighbourhood outside $N_{R'}(X)$ (see Figure~\ref{figure:1}). We will use this structure in order to embed every tree from $\mathcal T(\varrho n,D)$. 
\begin{figure}[h!]
    \centering
    \includegraphics[scale=.56]{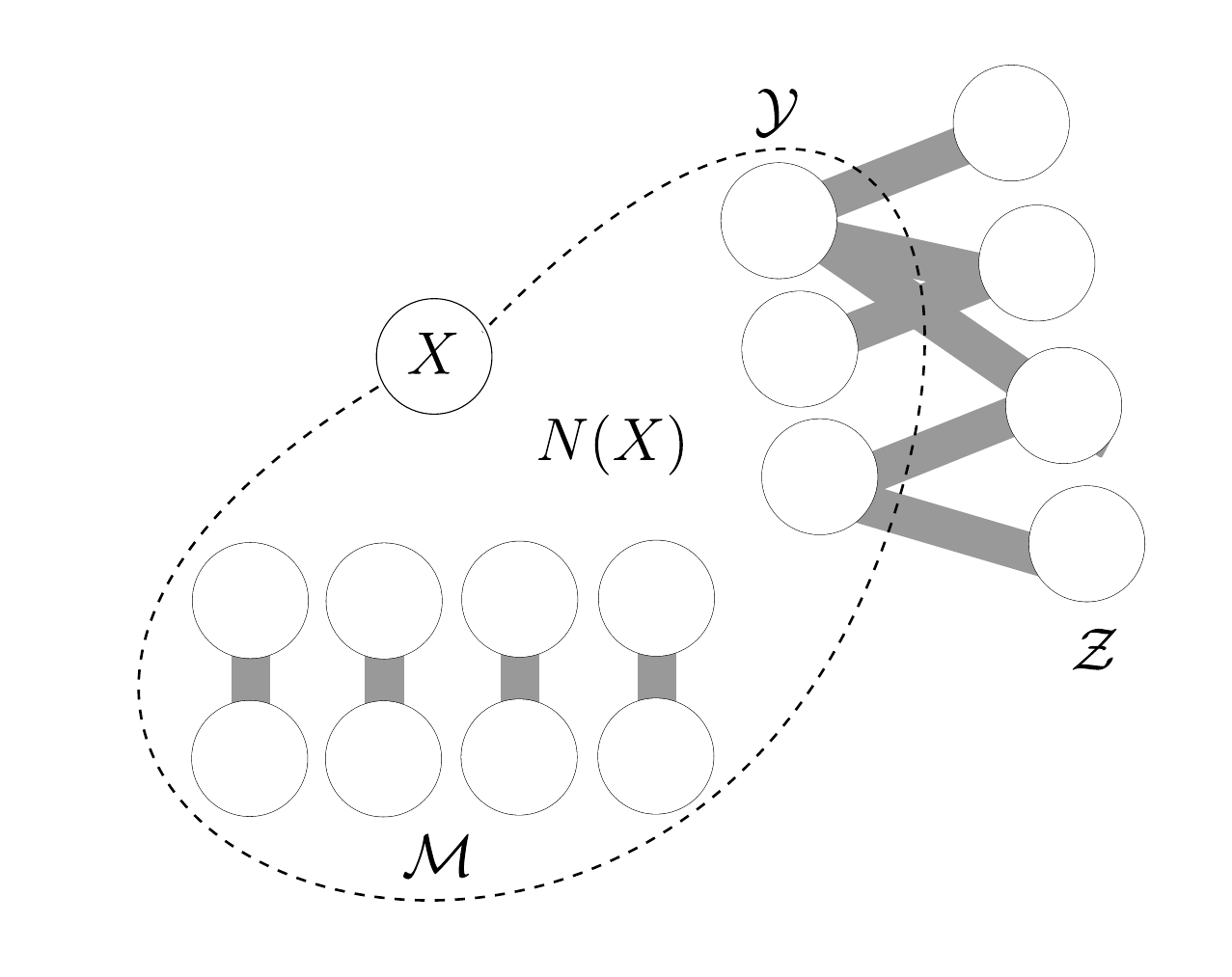}
    \label{figure:1}
    \caption{Structure in the reduced graph}
\end{figure}

The general idea is to partition a tree, embed each part into regular pairs and connect them through $X$. As an illustrative example, let us consider the case of a path $P$ with $\varrho n$ edges. We first cut $P$ into a constant number of small subpaths of odd length. We embed $P=P_1\dots  P_t$ sequentially path-by-path, in such a way that the embedding of $P$ remains connected at each step. Let  $\mathcal{H}$ be the bipartite graph induced by $\mathcal{Y}$ and $\mathcal{Z}= N_{R'}(\mathcal{Y})\setminus(X\cup N_{R'}(X))$. Starting with $P_1$, we embed the starting point of $P$ in $X$ and continue the embedding of $P_1$ into some edge either from $\mathcal M$ or $\mathcal H$. In general, the starting point of each subpath $P_i$ is embedded into $X$, and the rest of $P_i$ is embedded into some edge either from $\mathcal M$ or $\mathcal H$. Since $\mathcal{H}$ is bipartite and the number of vertices of $P_i$ is odd, the last vertex of $P_i$ can be embedded into a vertex having a large neighbourhood in $X$. This allows us to continue with the embedding of $P_{i+1}$, and so on.
 
 The proof for an arbitrary tree $T\in \mathcal{T}(\varrho n,D)$ follows the same general strategy. We first  split $T$ into a family of small rooted subtrees, and we ensure that the roots of the subtrees are at even distance from each other. The embedding of $T$ is done subtree-by-subtree following a breath first search, so that the root of each of small subtree is embedded into $X$ and the other vertices into some edge either from $\mathcal M$ or $\mathcal H$. Since $X$ is adjacent to both sides of every edge of $\mathcal M$, we can embed each subtree assigned to $\mathcal M$ in a balanced way, i.e., choosing to embed the largest bipartition class of a subtree in the cluster with the least amount of used vertices at each step. This will guarantee that almost all vertices in $\mathcal{M}$ will be used, provided that the subtrees are small enough compared to the size of the clusters. If $\mathcal M$ is large enough, then we can embed $T$ using only $\mathcal M$, but otherwise, we have to use $\mathcal H$. The main obstacle that appears while using $\mathcal H$ is that the bipartition classes of the subtrees might be unbalanced. This may be problematic because the strategy used to embed the roots in $X$ implies that the vertices of $T$ that are embedded in $\mathcal{Y}$ are all in the same bipartition class, in which case it might be impossible to use up almost all vertices in $\mathcal{Y}$, as we might run out of space in $\mathcal{Z}$. We solve this problem by assigning trees to $\mathcal{Y}$ so that we always use up more vertices in $\mathcal{Y}$ than in $\mathcal{Z}$. Therefore, if a cluster $Y\in \mathcal{Y}$ had no neighbours with spare room to embed a subtree, this would imply that we would have filled at least $2|N_{\mathcal{H}}(Y)|$ clusters of $\mathcal{H}$. The minimum degree of $R'$ is then enough to guarantee that we can go on with the aforementioned strategy.
 
\subsection{The proof of Theorem \ref{stab}} We will use a stability argument, together with Theorem 1.3, and some additional tools for embedding trees in expander graphs. Let us consider a \textit{typical} outcome of $G=G(N,p)$, where $N=rn+\Omega(1/p)$, and an arbitrary blue-red colouring of its edges with no blue copies of $K_{r+1}$ and no red copies of some tree in $\mathcal{T}(n,D)$. We divide the proof in the following steps.
\subsubsection{Rough structure of the colouring} Let $\eps,\alpha>0$ be small constants. Since the red graph $G_R$ is not $\mathcal T(n,D)$-universal, using Theorem~\ref{resilience} and the Erd\H{o}s-Simonovits stability theorem for sparse graphs (see Theorem~\ref{erdsim}) we show that the blue graph $G_B$ is close to $r$-partite. That is, there is a partition of the vertex set $V(G)=W_1\cup\dots\cup W_r$ such that $G[W_i]$ has at most $\eps pN^2$ blue edges for each $i\in [r]$.  For $i\in[r]$, since $e_B(W_i)\le \eps pn^2$ we can prove that there exists a large subset $V_i\subseteq W_i$ such that $G_R[V_i]$ is an \textit{expander graph}. We then show that $|V_i|=(1\pm o(1))n$ for each $i\in [r]$. Indeed, if $|V_i|\ge (1+\alpha)n$ for some $i\in[r]$ and $\alpha>0$, then using a theorem due to Haxell~\cite{haxell} (see Theorem~\ref{haxxel}) we deduce that $G_R[V_i]$ is $\mathcal T(n,D)$-universal, which is a contradiction with our assumption. Since no part is too large and not many vertices are removed, all the parts must have approximately the same size. Setting $V_0=V(G)\setminus (V_1\cup \dots \cup V_r)$, we obtain a partition  $V(G)=V_0\cup V_1\cup\dots\cup V_r$ such that $|V_0|\le \alpha N$ and $|V_i|=(1\pm \alpha)n$ for each $i\in [r]$.
\subsubsection{Refined structure of the colouring} In the second step we remove from each $V_i$ those vertices having a large blue neighbourhood in $V_i$ and the vertices having few neighbours in some $V_j$. Let $V(G)=V_0'\cup V_1'\cup\dots\cup V_r'$ be the resulting partition. We show that $e(V_i',V_j')=0$ for every $1\leq i<j\leq r$. From this we apply Janson's inequality to derive that any vertex $v\in V_0'$ with $\Omega(pN)$ blue neighbours in every other part would span a blue $K_{r+1}$. Moreover, we show that for all but $O(1/p)$ vertices  $v\in V'_0$ there is a unique $i\in [r]$ such that $d_B(v,V'_i)=o(pN)$ and $d_R(v,V'_i)=\Omega(pN)$, so we update the partition by setting $V_i':=V'_i\cup\{v\}$ and $V'_0=V'_0\setminus \{v\}$. Repeating this argument, we relocate vertices from $V'_0$ until only $O(1/p)$ vertices remain, and thus we end up with a partition $V=U_0\cup U_1\cup\dots \cup U_r$ such that $|U_0|=O(1/p)$, and for each $i\in[r]$ we have $\Delta(G_B[U_i])=o(pN)$ and $\delta(G_R[U_i])=\Omega(pN)$.
\subsubsection{Embedding of trees in expander graphs} Let $i^*\in [r]$ be such that $|U_{i^*}|$ is maximal. Since $N=rn+\Omega(1/p)$,  we have that $|U_{i^*}|=n+\Omega(1/p)$ and thus we must deal with the problem of embedding trees from $\mathcal{T}(n,D)$ in expander graphs of order $n+\Omega(1/p)$. This is the final aspect of the proof of Theorem~\ref{stab}.

The case of trees with at most $n/\log^4n$ leaves is covered by a theorem of Montgomery~\cite{montgomery}, which says that expander graphs are universal for the class of spanning trees with bounded degree and at most $n/\log ^4n$ leaves. For trees with at least $n/\log^4 n$ leaves, previous results in the literature do not fit in our context. Nevertheless, we may use an intermediate step in the proof of theorem of Haxell~\cite{haxell} (Theorem~\ref{haxxel}) which gives sufficient conditions to extend the partial embedding of a tree by adding a leaf at each step. To use this result we need to guarantee two conditions at each step. The first one is that the host graph has ``good"  expansion properties, and the second is that the partial embedding does not concentrate the expansion of the host graph. However, this strategy reaches the following barrier in our context. There might be two disjoint sets of sizes $\omega(1/p)$ and $n/\log^4 n$, respectively, with no edges in between. To see why this is an impediment, let $T\in \mathcal{T}(n,D)$ be a tree with at least $n/\log^4n$ leaves and let $T'\subseteq T$ be the subtree obtained by removing the leaves from $T$. Suppose that exists an embedding of $T'$ in $G_R[U_{i^*}]$. We can extend the embedding of $T'$ to an embedding of $T$ if and only if we can guarantee a certain Hall-type condition in the bipartite graph induced by the image of the parents of leaves and the set of unused vertices in $U_{i^\star}$. However, this graph might have $\omega(1/p)$ isolated vertices and we have only $O(1/p)$ ``extra" vertices.

We deal with this problem beforehand in the proof of Theorem \ref{manyleaves} in Section 3. The idea is to choose a random set $R\subseteq U_{i^*}$ of size $\Omega(n/\log^4 n)$ and then prove that there exists a realisation of $R$ such that every set $X\subseteq U_{i^*}$ of size $\Omega(1/p)$ and every set $Y\subseteq R$ of size $n/\log^4n$ have at least one edge in between. With some additional work, we can embed $T'$ in $G_R[U_{i^*}]$ so that the parents of the leaves are embedded in $R$ and then we can apply Hall's Theorem to finish the embedding.

\section{Trees in expanders}\label{sec:expanders}

For a graph $H$ and a subset $X\subseteq V(H)$, we denote by $\Gamma(X)=\bigcup_{x\in X}N(x)$ the set of neighbours of $X$ and write $N(X)=\Gamma(X)\setminus X$ for the external neighbourhood of $X$. In this section, we study the family of graphs called \textit{expanders} in which subsets of vertices have a large external neighbourhood. The notion of expander graphs has a plentiful number of applications in combinatorics and it is particularly useful for embedding trees. Indeed, Friedman and Pippenger~\cite{pip} proved that given integers $m$ and $D$, if a graph $H$ satisfies
\[|\Gamma(X)|\geq (D+1)|X|\text{ for all }X\subseteq V(H)\text{ with }1\le|X|\leq 2m, \]
then $H$ contains all trees with $m$ vertices and maximum degree $D$. A limitation of this result is that it only works for trees of size at most $|V(H)|/(2D+2)$. In a successful attempt to overcome this, Haxell \cite{haxell} considered a different notion of expansion to prove the following theorem.
\begin{theorem}
\label{haxxel}
Let $D,m,t \in \mathbb{N}$ and let $H$ be a graph with the following
properties:
\begin{enumerate}
    \item$|N(X)|\geq D|X|+1$, for all $X\subseteq V(H)$
    with $1\leq |X| \leq m$.
    \item\label{hax:2} $|N(X)|\geq t+D|X|+1$, for all $X\subseteq V(H)$
    with $m+1\leq |X| \leq 2m.$
\end{enumerate}
Then $H$ contains a copy of every tree $T$ with $t$ vertices and maximum degree at most $D$. Furthermore, given $v\in V(H)$ and $u\in V(T)$,
there exists an embedding of $T$ mapping $u$ to $v$.
\end{theorem}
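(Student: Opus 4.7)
The plan is to embed $T$ into $H$ one vertex at a time, always adding a leaf to the current partial embedding, while maintaining a structural invariant that guarantees future extensions remain possible. Fix a BFS ordering $u_1 = u, u_2, \ldots, u_t$ of $V(T)$ rooted at $u$, so that each $u_{i+1}$ is a leaf of $T_{i+1} := \{u_1, \ldots, u_{i+1}\}$ attached to a unique parent $u_{j(i+1)} \in T_i$. Set $\phi_1(u) = v$, and at step $i$ extend $\phi_i$ to $\phi_{i+1}$ by mapping $u_{i+1}$ to a carefully chosen unused neighbour of $\phi_i(u_{j(i+1)})$ in $H$.

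The invariant I would maintain is a Hall-type condition on the current embedding $\phi_i\colon T_i \to V(H)$: for every $S \subseteq T_i$, the set of vertices in $V(H) \setminus \phi_i(T_i)$ that are $H$-adjacent to some vertex of $\phi_i(S)$ has size at least $\sum_{x \in S}(\deg_T(x) - \deg_{T_i}(x))$. Equivalently, the auxiliary bipartite multigraph between $T_i$ (with capacity $\deg_T(x) - \deg_{T_i}(x)$ at each $x$) and the unused vertices of $H$ satisfies Hall's condition, so Hall's theorem yields a system of distinct representatives assigning each yet-unembedded neighbour (in $T$) of an embedded vertex to a distinct unused $H$-neighbour; this in particular provides a legal image for $u_{i+1}$. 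The invariant holds at $i = 1$ by condition~(1) applied to $\{v\}$, which gives $|N(v)| \ge D + 1 \ge \deg_T(u) + 1$.

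The main obstacle is preserving the invariant across the extension step. A greedy choice of $\phi_{i+1}(u_{i+1})$ may violate Hall's condition for some set $S$ that depended on the removed unused vertex. The resolution is an alternating-path rotation argument within the auxiliary bipartite graph: rather than simply remove a vertex from the representative system, one reroutes representatives along an alternating path so that the slack from elsewhere is transferred to compensate for the loss. To see that such a rotation must exist, suppose no valid extension preserves the invariant; then there is $S \subseteq T_{i+1}$ whose joint unused $H$-neighbourhood $U$ satisfies $|U| < \sum_{x \in S}(\deg_T(x) - \deg_{T_{i+1}}(x)) \le D|S|$, and since $N_H(\phi_{i+1}(S)) \subseteq U \cup (\phi_{i+1}(T_{i+1}) \setminus \phi_{i+1}(S))$ we get $|N_H(\phi_{i+1}(S))| < D|S| + t$. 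For $m+1 \le |S| \le 2m$ this directly contradicts condition~(2), the additive slack $t$ being precisely designed to absorb the cost of the already-embedded $T_{i+1}$; for $1 \le |S| \le m$ a refined accounting, together with the observation that before the extension the invariant held so only one unused vertex was just consumed, contradicts condition~(1). Iterating the extension $t - 1$ times produces the required embedding of $T$ with $\phi_t(u) = v$.
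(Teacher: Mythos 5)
This statement is quoted in the paper as Haxell's theorem and is not proved there, so I am judging your attempt on its own merits. The general strategy (leaf-by-leaf extension while maintaining a Hall-type invariant) is the right one, but there are two genuine gaps. First, your invariant quantifies only over subsets $S\subseteq T_i$ of already-embedded tree vertices, and such an invariant cannot be maintained: after you embed $u_{i+1}$ at a host vertex $w$, the invariant applied to $S=\{u_{i+1}\}$ requires $w$ to have at least $\deg_T(u_{i+1})-1\le D-1$ unused neighbours, but your invariant at step $i$ said nothing about $w$, which was then an unembedded host vertex; condition~(1) only gives $|N(w)|\ge D+1$, which is swamped once up to $t$ vertices have been used. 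This is exactly why Haxell's ``goodness'' condition (and the paper's notion of $m$-good in the definition preceding Lemma~\ref{goodhaxxel}) ranges over all subsets of $V(H)$ of size at most $m$ and carries the extra term $D|X\setminus\varphi(V(T))|$ for unembedded vertices: the vertex about to be used must already be known to have surplus.

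Second, the ``alternating-path rotation'' is not an argument. The invariant is Hall's condition on the auxiliary bipartite graph, a property of the graph that either holds or fails after the extension; rerouting a particular system of distinct representatives cannot restore a failed Hall condition. The actual content of Haxell's proof is in the \emph{choice} of $w$: one shows that the critical sets (those for which the goodness inequality is tight) of size at most $m$ are closed under union --- and this is precisely where condition~(2) is used, since a union of size between $m+1$ and $2m$ would enjoy the $+t$ slack and so could not be critical --- whence there is a maximal critical set whose neighbourhood $w$ can avoid. Your contradiction argument instead fixes a single violating set $S$ for a single choice of $w$ (whereas failure of the induction means \emph{every} admissible $w$ has \emph{some} violating set, and these sets vary with $w$), does not treat sets of size larger than $2m$, and for $1\le |S|\le m$ the bound $|N_H(\phi_{i+1}(S))|<D|S|+t$ does not contradict condition~(1), which only guarantees $\ge D|S|+1$; the promised ``refined accounting'' is exactly the missing critical-set argument, which is the heart of the proof.
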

A different and convenient way of phrasing property~(\ref{hax:2}) of Theorem \ref{haxxel} is the following. Let $H$ be a graph such that every pair of disjoint sets $X,Y\subseteq V(H)$, with $|X|=m_1$ and $|Y|=m_2$, satisfies $e(X,Y)>0$. Then for every $Z\subseteq V(H)$, with $m_1\le |Z|\leq 2m_1$, there are at most $m_2-1$ vertices in the non-neighbourhood of $Z$. By discounting the non-neighbours of $Z$ and the vertices in $Z$, we get 
\begin{equation}
\label{expanders}
|N(Z)| \geq |V(H)| - |Z| - m_2 +1.
\end{equation}
Therefore, when $|V(H)|-m_2\geq t+2(D+1)m_1$ we recover property~(\ref{hax:2}). The main result of this section considers the case where $m_1$ and $m_2$ have different orders of magnitude, which leads us to the following definition.
\begin{definition}
Let $D, m_1,m_2$ be integers. We say that a graph $H$ is an $(m_1,m_2,D)$-expander if 
\begin{enumerate}[(i)]
    \item\label{strong} $|N(X)| \geq D|X|+1$ for all $X\subseteq V(H)$ with $1\le|X|\leq m_1$, and
    
    \item\label{weak} $e(X,Y)>0$ for all disjoint sets $X,Y\subseteq V(H)$ with $|X|=m_1$ and $|Y|=m_2$.
\end{enumerate}
Moreover, if only property~\eqref{weak} holds, then we say that $H$ is a weak $(m_1,m_2)$-expander. We will often omit $D$ when it is clear from context.
\end{definition}
As is usual with tree embedding problems, we deal separately with trees with many or few leaves. Using the Absorption Method, Montgomery \cite{montgomery} proved the following result.
\begin{theorem}
\label{mont}
  Let $n$ be sufficiently large, let $D$ be a positive integer, and set $d=D\log^4n/20$. If $H$ is a $(n/2d,n/2d,d)$-expander on $n$ vertices, then $H$ contains every spanning tree with maximum degree bounded by $D$ and at most $n/d$ leaves.
\end{theorem}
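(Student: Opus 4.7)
The plan is to adapt Montgomery's absorption method, exploiting the fact that a bounded-degree tree with at most $n/d$ leaves is structurally dominated by long bare paths. A standard decomposition argument yields a partition $T = T_0 \cup P_1 \cup \ldots \cup P_s$, where $T_0$ is a subtree of size $O(n/d)$ containing every vertex of degree at least $3$ together with short buffer paths attached to each such vertex, and the $P_i$ are internally disjoint bare paths of length $\Theta(\log^3 n)$ attached to $T_0$ at their endpoints. The embedding of $T$ into $H$ will be built in two phases that correspond to this decomposition, with an absorbing structure reserved in $H$ to handle the bare paths.

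The first key step is the construction, inside $H$, of an \emph{absorbing structure} $A \subseteq V(H)$ of size $o(n)$ with the following property: for any set $S \subseteq V(H)\setminus A$ whose size equals the number of bare-path vertices of $T$ not accounted for by $T_0$, and any prescribed pairing of entry vertices in $A$, the graph $H[A \cup S]$ contains an internally disjoint system of paths between the designated endpoints realising the prescribed lengths $|V(P_i)|$. The construction would first use condition~(\ref{strong}) to run a Friedman--Pippenger style argument producing, for each pair of potential endpoints, a reservoir of short connecting paths, and then combine these reservoirs into a global flexible gadget via a random matching argument. Condition~(\ref{weak}) provides the Hall-type certification required so that every small collection of prospective endpoints can be simultaneously connected.

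With $A$ reserved, I would embed $T_0$ into $V(H)\setminus A$ using Theorem~\ref{haxxel} applied to the induced subgraph, ensuring that the attachment points of the $P_i$ are mapped to designated vertices with good connectivity into $A$. Next, each $P_i$ would be extended partially into $V(H)\setminus(A\cup \mathrm{Im}(T_0))$ by greedy extension, again using~(\ref{strong}), which behaves as a strong minimum-degree condition at small scales. Finally, the absorbing property of $A$ is invoked: the vertices still uncovered form exactly the set $S$ for which $A$ was designed, so each $P_i$ can be closed into a path of the correct length through $A\cup S$, yielding a spanning embedding of~$T$.

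The main obstacle is the construction and verification of the absorber. The expansion hypotheses are substantially weaker than the minimum-degree conditions typical in absorption arguments, so the entire system of connecting paths has to be built purely out of the expansion, which forces a delicate choice of the scale parameter (matched to $d = D\log^4 n/20$, so that $A$ stays within size $o(n/d)$ yet still provides flexibility at the leaf scale). A secondary difficulty is to preserve the bipartition of $T$ throughout the absorption phase, since each $P_i$ has a fixed length and parity that must be realised exactly in $H$; this typically requires building the absorber out of pieces of both parities so that either side of the bipartition of the remainder can be absorbed.
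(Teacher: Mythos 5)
First, a point of orientation: the paper does not actually prove Theorem~\ref{mont}. It is quoted from Montgomery's work, with the remark that Montgomery's proof (Section~4.2 of that paper) uses only the stated expansion property of $G(n,p)$, so the statement follows verbatim from his argument. Your outline is, in essence, a reconstruction of that absorption strategy --- decompose a tree with few leaves into a small core plus long bare paths, reserve an absorbing structure, embed the core and most of each path by greedy extension, and close the paths through the absorber --- so you have identified the right approach, and it is the same one the paper relies on.

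However, as written your proposal is a plan rather than a proof. Everything that makes the theorem hard is concentrated in the step you defer: constructing, purely from properties~(\ref{strong}) and~(\ref{weak}) of the $(n/2d,n/2d,d)$-expander, an absorbing set $A$ of size $o(n)$ such that for \emph{every} leftover set $S$ of the right size and every prescription of endpoint pairs, lengths and parities, $H[A\cup S]$ contains the required system of internally disjoint paths. You describe this as ``a Friedman--Pippenger style argument'' followed by ``a random matching argument'' and you yourself flag it as the main obstacle, but you give no construction of the gadget, no precise statement of the flexibility property to be verified, and no argument for why weak expansion at scale $n/2d$ yields the Hall-type condition when $S$ is adversarial rather than random. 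Since this absorber is the entire technical content of Montgomery's Section~4.2, the proposal cannot be assessed as a correct proof in its current form; it would need either to carry out that construction in full or, as the paper does, to cite Montgomery's proof and verify that the only hypothesis used there is the expansion appearing in the statement.
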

We remark that, although Theorem~\ref{mont} is not stated explicitly in~\cite{montgomery}, it follows directly from Montgomery's proof (see~\cite[Section~4.2]{montgomery}), where it is only used that $G(n,p)$ is an expander as in Theorem~\ref{mont}. The main result of this section deals with the case of (non-spanning) trees with many leaves.
\begin{theorem}
  \label{manyleaves}
Let $m_1,m_2,n, D$ be positive integers such that $6m_1\log n  < m_2$ and $16Dm_2 \leq n$, and assume that $n$ is sufficiently large. Let $H$ be a graph on $n$ vertices such that $H$ is 
\begin{enumerate}[(1)]
  \item\label{exp:2}a weak $(m_1, n/32D)$-expander, and
  \item\label{exp:3}a weak $(m_2,m_2)$-expander.
\end{enumerate}
Then $H$ contains every tree $T\in\mathcal T(n-m_1,D)$ with at least $24Dm_2$ leaves.
\end{theorem}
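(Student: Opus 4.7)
The strategy is to embed $T$ into $H$ in two phases. Let $L^{*}\subseteq L(T)$ be a chosen subset of roughly $16Dm_{2}$ leaves of $T$ (possible since $|L(T)|\ge 24Dm_{2}$), let $P^{*}$ be the set of their parents, and set $T^{*}:=T-L^{*}$. In Phase~1 we embed $T^{*}$ into $H$ while steering the images of $P^{*}$ into a carefully chosen reservoir $R\subseteq V(H)$ of size of order $n/(32D)$. In Phase~2 we extend the embedding to the leaves of $L^{*}$ via Hall's theorem, applied to the bipartite graph between $L^{*}$ and the unused set $U:=V(H)\setminus\phi(V(T^{*}))$ of size $|U|=m_{1}-1+|L^{*}|$. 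The two weak-expansion hypotheses give, via the argument preceding~\eqref{expanders}, the following bounds: for $Z\subseteq V(H)$, the non-neighbourhood of $Z$ in $V(H)\setminus Z$ has size at most $n/(32D)-1$ when $|Z|\ge m_{1}$, at most $m_{1}-1$ when $|Z|\ge n/(32D)$ (dually, from the symmetric reading of~\eqref{exp:2}), and at most $m_{2}-1$ when $|Z|\ge m_{2}$. These three estimates replace property~\eqref{hax:2} of Theorem~\ref{haxxel} in different size regimes.

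For Phase~1, we extend the embedding of $T^{*}$ one vertex at a time in a breadth-first order, using an intermediate lemma from the proof of Theorem~\ref{haxxel}: given a partial embedding, the lemma adds a new leaf provided a local Hall-type condition holds between the embedded fringe and the free vertices. The three non-neighbourhood estimates above furnish this local condition in the relevant size regimes. At each step, if the new vertex belongs to $P^{*}$, we additionally require its image to be chosen from $R$; because $|R|$ is comfortably larger than $|P^{*}|$, this steering is compatible with the flexibility in the extension step.

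For Phase~2, we verify Hall's condition in the bipartite graph on $(L^{*},U)$ with edges $\{\ell u\colon \phi(p(\ell))\sim_{H}u\}$. For any $S\subseteq L^{*}$ with parent-images $Q_{S}:=\{\phi(p(\ell)):\ell\in S\}\subseteq R$, we bound the non-neighbourhood of $Q_{S}$ in $U$ by that in $V(H)$ and apply the estimates above depending on $|Q_{S}|$. The reservoir is built so that in the dominant regime the strong bound (non-neighbourhood $\le m_{1}-1$) applies, giving $|N(Q_{S})\cap U|\ge |U|-m_{1}+1=|L^{*}|\ge |S|$; the remaining regimes are handled by the other two non-neighbourhood estimates together with the bounded-degree structure of $T$, which controls $|S|$ relative to $|Q_{S}|$.

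The main obstacle is Phase~1: since property~\eqref{strong} of the expander definition is missing from our hypotheses, Theorem~\ref{haxxel} does not apply off the shelf, and the intermediate step must be executed so that the fringe sets appearing in the local Hall conditions always have size at least $m_{1}$. This is achieved by processing the tree $T^{*}$ in an order adapted to its bounded-degree and many-leaves structure, so that the weak expansion suffices throughout. A secondary difficulty is coordinating this extension with the steering of $P^{*}$ into $R$; this is resolved by adding a compatible matching constraint to the intermediate step, which succeeds because $|R|\gg |P^{*}|$ and the reservoir was chosen to avoid the few vertices where the local Hall argument could fail.
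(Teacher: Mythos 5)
Your two--phase skeleton (remove a batch of leaves, embed the pruned tree while steering the parents into a reservoir, finish with Hall) is indeed the paper's strategy, but the proposal is missing the two ingredients that make each phase actually work, and in both cases the gap is substantive rather than cosmetic.

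First, the reservoir. In Phase~2 the set $Q_S$ of parent--images has size between roughly $m_2$ and $16Dm_2$, and for such sizes your three non-neighbourhood estimates give only the bound $m_2-1$ (the bound $m_1-1$ requires $|Q_S|\ge n/32D$, which need not hold since $m_2$ may be far below $n/(512D^2)$). With a non-neighbourhood of size up to $m_2-1$ and a leftover set of size $m_1+|L^*|$, Hall's condition fails precisely because $m_1<m_2$ --- this is the obstruction the paper points out explicitly after the statement of the theorem. No ``dual reading'' of the hypotheses can rescue this; the paper instead \emph{constructs} a set $W_1$ of size $|U_1|+4Dm_2=\Theta(Dm_2)$ (not $\Theta(n/D)$) by a random choice: for each $m_1$-set $X$ the expected number of vertices of $W_1$ missed by $X$ is at most $m_2/2$, a hypergeometric tail bound plus a union bound over all $n^{m_1}$ sets (this is exactly where the hypothesis $6m_1\log n<m_2$ enters) yields a realisation in which every $m_1$-set misses fewer than $m_2$ vertices of $W_1$, equivalently every subset of $W_1$ of size at least $m_2$ has fewer than $m_1$ non-neighbours in $H$. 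Your proposal asserts that the reservoir ``is built so that in the dominant regime the strong bound applies'' but supplies no mechanism for this, and the hypothesis $6m_1\log n<m_2$ is never used.

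Second, Phase~1. The leaf-by-leaf extension lemma (Lemma~\ref{goodhaxxel}) requires the goodness condition for \emph{all} sets of size between $1$ and $m$, singletons included, so it cannot be made to ``only ever see fringe sets of size at least $m_1$'' by reordering the tree; small sets genuinely must expand. The paper manufactures this expansion by first splitting $V(H)$ into three parts $W_1,W_2,W_3$ matched to a tripartition of $T-L$, and then iteratively deleting non-expanding small sets to extract bipartite $(m_2,D)$-expanders (Claim~\ref{claim:bip expanders}); the weak $(m_2,m_2)$-expansion guarantees that at most $2m_2$ vertices are deleted from each part, which the size slack $4Dm_2$ absorbs. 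The steering of the parents into the reservoir is then automatic because $U_1$ is one of the three classes of the tripartition and Lemma~\ref{choosingwhere} embeds each class into its designated part --- no ad hoc ``matching constraint'' is needed. As written, your Phase~1 is an assertion that the known machinery can be adapted, not a proof.
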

We remark that the proof of Theorem 3.1 in \cite{haxell} relies on a clever inductive argument in order to embed all vertices of the tree but the leaves, and then uses a Hall-type result to finish the embedding. However, the hypothesis of Theorem \ref{manyleaves} do not enable a straightforward modification of this proof for the following reason. Given a tree $T$, let $L\subseteq V(T)$ be the set of leaves of $T$ and let $P=N(L)$ be their parents. Note that if $T\in \mathcal{T}(n-m_1,D)$ is a tree with $|L|= \Omega (m_2)$ leaves, then we also have $|P|=  \Omega(m_2)$. Suppose that we have a partial embedding of $T-L$ which we want to extend to $T$. By the hypothesis of Theorem \ref{manyleaves}, it might be that the image of $P$ has $m_2-1$ non-neighbours in the leftover vertices, in which case is impossible to extend the embedding of $T-L$ since $m_1<m_2$. 

We address this obstacle by finding a set $W\subseteq V(H)$ with $\Theta(m_2)$ vertices such that every subset $X\subseteq W$ with $|X|=m_2$ has less than $m_1$ non-neighbours in $H$. We then manage to find an embedding $\varphi:V(T- L)\to V(H)$ such that $\varphi(P)\subseteq W$, in which case we would have that
\[|N(X)\setminus \varphi(V(T-L))|\geq n-|T-L| - m_1+1> |L|\]
for every $X\subseteq \varphi(P)$ with $|X|\geq m_2$. Nevertheless, is also necessary to guarantee that small subsets of $\varphi(P)$ have enough neighbours in the set of unused vertices. Motivated by this, we need to define a `good' embedding of a tree. Basically, we say that an embedding of a tree $T$ is \textit{good} if the set of used vertices satisfies a certain Hall-type condition, for small sets, to the set of unused vertices. 
\begin{definition}
Let $m$ be a positive integer, let $T$ be a tree with maximum degree at most $D$, and let $H$ be a bipartite graph with parts $V_1$ and $V_2$. We say that an embedding $\varphi:V(T)\to V(H)$ is $m$-good in $H$ if for every $i\in \{1,2\}$ and $X\subseteq V_i$, with $1\le|X|\leq m$, we have
\[ |N_H(X)\setminus \varphi(V(T))| \geq \sum_{v\in \varphi^{-1}(X)}\big(D - d_T(v)\big) + D|X\setminus \varphi(V(T))|. \]
\end{definition}
In the previous definition we considered $H$ as being bipartite for technical reasons. More specifically, as we want to embed the set of parents of leaves into a set $W$, we have to alternate the embedding of  $T$ between $W$ and $V(H)\setminus W$ and thus it is easier to consider $H$ as being a bipartite graph.  The next lemma gives sufficient conditions to extend good embeddings, and it was proved in~\cite{Samotij} as the induction step\footnote{Under the  hypothesis Theorem~7 from~\cite{Samotij}, the authors state that good embeddings can be extended as ``Property 2" in page 6 from~\cite{Samotij}. Moreover, the only place where they use the size of neighbours of sets with more than $m$ vertices is in the proof of Claim~8. One can check that~\eqref{mew} is enough to get the same proof.} in the proof of a bipartite analogue of Theorem~\ref{haxxel} (see Theorem \ref{magmar}).
\begin{lemma}
  \label{goodhaxxel}
  Let $m,n,D$ be positive integers, let $T$ be a tree with maximum degree at most $D$, and let $H$ be a bipartite graph with parts $V_1$ and $V_2$. Suppose that there exists an $m$-good embedding $\varphi:V(T)\to V(H)$, and that for $i\in\{1,2\}$ and any subset $X\subseteq V_i$, with $m\leq|X|\leq 2m$, we have
 \begin{equation}
 \label{mew} |N_H(X)\setminus \varphi(V(T))| \geq 2Dm + 2.\end{equation}
Then for every vertex $v\in T$, with $d_T(v)<D$, there exists an $m$-good embedding of the tree obtained by adding to $T$ a leaf adjacent to $v$.
\end{lemma}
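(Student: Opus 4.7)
The plan is to pick the image $w$ of the new leaf $\ell$ from the set $A := N_H(\varphi(v))\setminus \varphi(V(T))$, inside the part $V_a$ of $H$ opposite to the one containing $\varphi(v)$; since $H$ is bipartite and $\varphi$ must respect the bipartition, this is the only possibility. Applying the $m$-goodness of $\varphi$ to $X=\{\varphi(v)\}$ shows $|A|\geq D - d_T(v)\geq 1$, so candidates exist; the task is to identify one that preserves $m$-goodness.

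Write $U:=\varphi(V(T))$ and define the defect
\[
\mathrm{def}(X) := |N_H(X)\setminus U| - \sum_{u\in\varphi^{-1}(X)}(D-d_T(u)) - D|X\setminus U|,
\]
so that the $m$-goodness of $\varphi$ is precisely $\mathrm{def}(X)\geq 0$ for every $X$ with $1\leq |X|\leq m$. A case check on whether $X$ lies in the same part of the bipartition as $w$, and on whether $\varphi(v)\in X$, shows that after setting $\varphi'(\ell)=w$ (so that $U$ becomes $U\cup\{w\}$ and $d_T(v)$ increases by one) the new defect is non-negative in every case, \emph{except} in the single ``tight blocker'' configuration $X\subseteq V_{3-a}$, $\varphi(v)\notin X$, $\mathrm{def}(X)=0$, $w\in N_H(X)$, where the new defect drops to $-1$. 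It therefore suffices to find $w\in A$ not adjacent to any tight blocker, i.e., to any tight set $X\subseteq V_{3-a}$ with $\varphi(v)\notin X$ and $1\leq |X|\leq m$.

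Suppose, for a contradiction, that every $w\in A$ lies in $N_H(X_w)$ for some tight blocker $X_w$ of size at most $m$. Starting from $Z:=\emptyset$, while $|Z|\leq m-1$ and $A\not\subseteq N_H(Z)$, pick any $w\in A\setminus N_H(Z)$ and replace $Z$ by $Z\cup X_w$. The submodularity inequality
\[
\mathrm{def}(Z\cup X_w) + \mathrm{def}(Z\cap X_w)\ \leq\ \mathrm{def}(Z) + \mathrm{def}(X_w),
\]
which follows from the submodularity of $|N_H(\cdot)\setminus U|$ and the modularity of the remaining two terms, combined with $\mathrm{def}(Z\cap X_w)\geq 0$ (since $|Z\cap X_w|\leq m$), preserves the invariant $\mathrm{def}(Z)\leq 0$; the invariant $\varphi(v)\notin Z$ is also kept. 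The loop halts either in case (a) $A\subseteq N_H(Z)$ with $1\leq |Z|\leq m-1$, or in case (b) $m\leq |Z|\leq 2m-1$ after the final merge.

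In case (a), $A\subseteq N_H(Z)$ forces $N_H(Z\cup\{\varphi(v)\})\setminus U = N_H(Z)\setminus U$; applying $m$-goodness of $\varphi$ to $Z\cup\{\varphi(v)\}$ (of size at most $m$) and using the tightness of $Z$ (forced by $\mathrm{def}(Z)\leq 0$ combined with $\mathrm{def}(Z)\geq 0$ from goodness, since $|Z|\leq m$) forces $D-d_T(v)\leq 0$, contradicting $d_T(v)<D$. In case (b), hypothesis~\eqref{mew} applied to $Z$ gives $|N_H(Z)\setminus U|\geq 2Dm+2$, whereas $\mathrm{def}(Z)\leq 0$ together with $|Z|\leq 2m$ yields $|N_H(Z)\setminus U|\leq D|Z|\leq 2Dm$, a contradiction. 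The main delicate points are the defect case analysis (verifying that the only harmful configuration is the one isolated above) and the submodularity-based merging (ensuring that $Z$ always lands in one of the two useful size regimes).
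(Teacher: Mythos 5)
Your proof is correct. Note that the paper itself does not prove Lemma~\ref{goodhaxxel}: it imports it from Balogh--Csaba--Samotij (the footnote explains that their induction step, ``Property 2'' and Claim~8, goes through verbatim once one checks that hypothesis~\eqref{mew} suffices for sets of size between $m$ and $2m$). What you have written is a correct self-contained reconstruction of essentially that argument: the leaf must go to some $w\in N_H(\varphi(v))\setminus\varphi(V(T))$ in the part opposite $\varphi(v)$ (nonempty by $m$-goodness at $\{\varphi(v)\}$ and $d_T(v)<D$); your defect bookkeeping correctly isolates the unique harmful configuration (a tight set $X\subseteq V_{3-a}$ with $\varphi(v)\notin X$ and $w\in N_H(X)$), since for $X$ in the other part or containing $\varphi(v)$ the loss of $w$ from the free neighbourhood is exactly compensated by the decrease of $D-d_T(v)$ or of $D|X\setminus\varphi(V(T))|$. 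The contradiction step is also sound: the greedy union of blockers stays of non-positive defect by submodularity of $X\mapsto|N_H(X)\setminus\varphi(V(T))|$ together with $\mathrm{def}(Z\cap X_w)\ge 0$ (goodness, or $\mathrm{def}(\emptyset)=0$), and terminates either at a tight $Z$ of size at most $m-1$ dominating all of $A$ — whence $m$-goodness applied to $Z\cup\{\varphi(v)\}$ forces $d_T(v)\ge D$ — or at a set of size in $[m,2m-1]$ with at most $D|Z|\le 2Dm$ free neighbours, contradicting~\eqref{mew}. The only stylistic difference from the source is that Balogh--Csaba--Samotij organise the blockers via a single maximal critical set rather than an explicit merging loop, but the content is the same.
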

We will be able to use Lemma~\ref{goodhaxxel} in graphs satisfying the following notion of `bipartite expansion'.
\begin{definition}
   Let $D\geq 2$ and let $H$ be a bipartite graph with parts $V_1$ and $V_2$ such that $|V_1|\leq |V_2|$. Let $m$ be a positive integer with $m<|V_1|$. We say that $H$ is a bipartite $(m,D)$-expander if the following two properties hold.
   \begin{enumerate}
     \item\label{bipexp:1} For $i\in \{1,2\}$, every set $X\subseteq V_i$, with $1\le |X|\le m$, satisfies $|N_H(X)|\ge D|X|$.
     \item\label{bipexp:2} For every pair of sets $X_1\subseteq V_1$ and $X_2\subseteq V_2$, each of size at least $m$, we have $e(X_1,X_2)>0$. 
   \end{enumerate}\end{definition}
Note that property~\eqref{bipexp:2} implies that for every subset $X\subseteq V_i$, with $|X|\geq m$, we have
 \[|N(X)|\geq |V_{3-1}|-m+1.\] 
This will guarantee that \eqref{mew} holds for the embedding of any tree with small enough bipartition classes. Now we can state one of the main results that we need for the proof of Theorem~\ref{manyleaves}.
\begin{lemma}
\label{choosingwhere}
Let $m,D$ be positive integers and let $T$ be a tree with maximum degree at most $D$. Let $U_1\cup U_2$ be any partition of one the bipartition classes of $T$ and let $U_3$ be the other bipartition class. Let $H$ be a graph on $n$ vertices and let $V_1,V_2,V_3\subseteq V(H)$ be disjoint sets such that $|V_i|\geq |U_i|+3Dm$ for $i\in\{1,2,3\}$. If $H[V_1, V_3]$, $H[V_2, V_3]$ and $H[V_1\cup V_2,V_3]$ are bipartite $(m,D)$-expanders, then there exists an $m$-good  embedding $\varphi:V(T)\to V(H)$ such that  $\varphi(U_i)\subseteq V_i$  for $i\in\{1,2,3\}$.
\end{lemma}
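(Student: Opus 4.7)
The plan is to construct the embedding $\varphi$ incrementally, adding the vertices of $T$ one at a time in a BFS order and invoking Lemma~\ref{goodhaxxel} at each step in the appropriate bipartite subgraph of $H$. Write $H_1 := H[V_1, V_3]$, $H_2 := H[V_2, V_3]$, and $H_3 := H[V_1 \cup V_2, V_3]$, all of which are bipartite $(m, D)$-expanders by hypothesis. Root $T$ at an arbitrary vertex and list its vertices $u_0, u_1, \ldots, u_t$ in BFS order, so that for each $i \ge 1$, $u_i$ is a leaf of $T_i := T[\{u_0, \ldots, u_i\}]$ attached to a parent $p_i \in T_{i-1}$.

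Starting from a suitable initial embedding $\varphi_0$ of $u_0$, I would extend $\varphi_{i-1}$ to $\varphi_i$ by placing $u_i$ adjacent to $\varphi(p_i)$, using Lemma~\ref{goodhaxxel} in a bipartite graph chosen according to which part $U_k$ contains $u_i$: if $u_i \in U_1$ (so $p_i \in U_3$ and $\varphi(p_i) \in V_3$), apply Lemma~\ref{goodhaxxel} to $H_1$ to place $\varphi(u_i) \in V_1$; symmetrically for $u_i \in U_2$ via $H_2$; and if $u_i \in U_3$ with $p_i \in U_j$ for some $j \in \{1, 2\}$, apply Lemma~\ref{goodhaxxel} to $H_j$ to place $\varphi(u_i) \in V_3$. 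The invariant I intend to maintain is that, restricted to the relevant pair of parts, the partial embedding is $m$-good in all three bipartite graphs $H_1$, $H_2$, $H_3$ simultaneously.

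To check that Lemma~\ref{goodhaxxel} applies at each step, one verifies condition~\eqref{mew} in the relevant $H_k$. This is immediate from property~\eqref{bipexp:2}: for a set $X$ of size between $m$ and $2m$ on one side of $H_k$, the non-neighbourhood of $X$ in the opposite part has size at most $m-1$, hence $|N_{H_k}(X)| \ge |V_{\ast}| - m + 1$ where $V_{\ast}$ is the opposite part. Combining with $|V_{\ast}| - |\varphi(V(T_{i-1}))| \ge |V_{\ast}| - |V(T)| \ge 3Dm$, which follows from the hypothesis $|V_{\ast}| \ge |U_{\ast}| + 3Dm$, yields at least $3Dm - m + 1 \ge 2Dm + 2$ unused neighbours, as required.

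The main obstacle will be showing that the $m$-good invariant is preserved in all three bipartite graphs at once, since Lemma~\ref{goodhaxxel} only guarantees preservation in the graph to which it is directly applied. I plan to handle this by a case analysis exploiting: (a) the vertex sets of $H_1$ and $H_2$ share only $V_3$, so placing a vertex into $V_1$ cannot affect $m$-goodness in $H_2$, and vice versa; (b) the inclusions $H_1, H_2 \subseteq H_3$ ensure that the $m$-good condition in $H_k$ for a set $X \subseteq V_k$ transfers directly to the corresponding condition in $H_3$; and (c) for sets $X \subseteq V_3$, the slack $|V_i| \ge |U_i| + 3Dm$ absorbs the small discrepancies between the $H_k$ and $H_3$ conditions that may arise from the newly placed vertex. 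A careful choice of $\varphi(u_i)$ among the candidates guaranteed by Lemma~\ref{goodhaxxel} should then preserve all three invariants simultaneously, completing the induction and yielding the required $m$-good embedding with $\varphi(U_i) \subseteq V_i$ for each $i \in \{1,2,3\}$.
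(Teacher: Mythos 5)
Your overall architecture is the same as the paper's: extend a partial embedding leaf by leaf via Lemma~\ref{goodhaxxel}, maintaining the invariant that the embedding is simultaneously $m$-good in $H[V_1,V_3]$, $H[V_2,V_3]$ and $H[V_1\cup V_2,V_3]$. However, there is a genuine gap in the step you yourself flag as the main obstacle, and it stems from your choice of which bipartite graph to apply Lemma~\ref{goodhaxxel} in when adding a vertex $u_i\in U_3$. You apply it in $H_j=H[V_j,V_3]$ where $j$ is the side containing the parent $p_i$. The lemma then certifies $m$-goodness only in $H_j$, and the new vertex lands in $V_3$, which is a vertex of \emph{all three} bipartite graphs. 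For a small set $X\subseteq V_{3-j}$ with $1\le|X|\le m$, the $m$-good inequality in $H_{3-j}$ now has its left-hand side $|N(X)\cap V_3\setminus\varphi(V(T))|$ reduced by one (the new image may be a neighbour of $X$), while its right-hand side is unchanged: $\varphi^{-1}(X)$ gains no new vertices and $d_T(v)$ is unchanged for every $v\in\varphi^{-1}(X)$, since the new leaf hangs off a vertex embedded in $V_j$. None of your cases (a)--(c) covers this: (a) concerns placing a vertex into $V_1$ or $V_2$, not into $V_3$; (b) goes the wrong direction (from $H_k$ to $H_3$, for sets on the $V_1\cup V_2$ side); and (c) only treats sets $X\subseteq V_3$. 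These unit losses accumulate over the up to $|U_3|$ insertions of $U_3$-vertices, so the invariant genuinely fails. The paper avoids this precisely by applying Lemma~\ref{goodhaxxel} in $H[V_1\cup V_2,V_3]$ whenever the new vertex lies in $U_3$; $m$-goodness there restricts immediately to $m$-goodness in both $H[V_1,V_3]$ and $H[V_2,V_3]$ for subsets of $V_1$ or $V_2$, and for subsets of $V_3$ there is nothing to check since no new vertex entered $V_1\cup V_2$. Note also that Lemma~\ref{goodhaxxel} asserts the existence of one $m$-good extension rather than a menu of candidates, so the ``careful choice among the candidates'' you invoke is not available from the lemma as stated.

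Two smaller points. First, your verification of \eqref{mew} writes $|V_\ast|-|\varphi(V(T_{i-1}))|\ge|V_\ast|-|V(T)|\ge 3Dm$, which does not follow from $|V_\ast|\ge|U_\ast|+3Dm$ (indeed $|V(T)|$ may exceed $|V_\ast|$); the correct count is that only vertices of $U_\ast$ are embedded into $V_\ast$, giving $|N(X)\setminus\varphi(V(T))|\ge|V_\ast|-m+1-|U_\ast|\ge 3Dm-m+1\ge 2Dm+2$, as in \eqref{psyduck}. Second, you should handle the base case explicitly (a single vertex, possibly in $U_3$ with no parent), which the paper does in Claim~\ref{claim:basecase} using property~\eqref{bipexp:2} of the bipartite expanders.
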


  The strategy of the proof of Lemma~\ref{choosingwhere} is to iteratively apply Lemma~\ref{goodhaxxel} in order to extend a partial embedding of the tree by adding a leaf at each step. Since we will alternate between vertices of $V_1,V_2$ and $V_3$, we will need to keep track that the embeddings are $m$-good in the graphs $H[V_1,V_3]$, $H[V_2,V_3]$ and $H[V_1\cup V_2, V_3]$, respectively. This will guarantee that, at any stage of the embedding, small subsets of $V_1\cup V_2$ have enough neighbours in the unused vertices of $V_3$, and that small subsets of $V_3$ have enough neighbours in the unused vertices of both $V_1$ and $V_2$. 
  
  In the context of Lemma~\ref{choosingwhere}, for a subtree $S\subseteq T$ we say that $\varphi:V(S)\to V(H)$ is $m$-great if
\begin{enumerate}[(I)]
\item\label{great:1} $U_i\cap V(S)$ is mapped to $V_i$, for $i\in\{1,2,3\}$, and
\item\label{great:2}    $\varphi$ is $m$-good in $H[V_1\cup V_2,V_3]$ and in $H[V_i,V_3]$, for $i\in\{1,2\}$.
\end{enumerate}
\begin{proof}[Proof of Lemma~\ref{choosingwhere}] We start by showing that there exists an $m$-great embedding of any single vertex subtree $S\subseteq T$ 
 \begin{claim}\label{claim:basecase}
  Let $S\subseteq T$ be a single vertex subtree. If $\varphi:V(S)\to V(H)$ is an embedding which satisfies property~\eqref{great:1}, then $\varphi$ is $m$-great.
 \end{claim}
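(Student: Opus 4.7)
My plan is to verify property~(II) of $m$-greatness, since property~(I) holds by hypothesis. That is, I will show that the single-vertex embedding $\varphi:V(S)\to V(H)$ is $m$-good in each of the three bipartite graphs $H[V_1,V_3]$, $H[V_2,V_3]$, and $H[V_1\cup V_2,V_3]$. Let $v$ be the unique vertex of $S$ and set $v^* := \varphi(v)$; by property~(I), $v^* \in V_{i_0}$ for the index $i_0 \in \{1,2,3\}$ with $v \in U_{i_0}$.

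Fix one of the three bipartite graphs, call it $H'$, with bipartition $A\cup B$, and consider any $X$ contained in $A$ or in $B$ with $1\le |X|\le m$. The $m$-good inequality to be checked reads
\[
|N_{H'}(X)\setminus \varphi(V(S))|
\;\ge\;
\sum_{u\in \varphi^{-1}(X)} \big(D-d_S(u)\big)
+ D\,|X\setminus \varphi(V(S))|.
\]
Because $|V(S)|=1$ and $d_S(v)=0$, a direct computation shows that the right-hand side equals $D|X|$ regardless of whether $v^* \in X$ or not: if $v^* \in X$, the sum contributes $D$ while $|X\setminus\{v^*\}|=|X|-1$, and if $v^* \notin X$, the sum is empty while $|X\setminus\{v^*\}|=|X|$. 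For the left-hand side, I plan to invoke the bipartite $(m,D)$-expansion of $H'$, which yields $|N_{H'}(X)|\ge D|X|$.

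The remainder of the argument is a short case analysis on the position of $v^*$. If $v^*$ lies on the same side of $H'$ as $X$, or if $v^*\notin V(H')$ altogether, then bipartiteness forces $v^*\notin N_{H'}(X)$, so $|N_{H'}(X)\setminus\{v^*\}|=|N_{H'}(X)|\ge D|X|$ and we are done. The remaining configurations, where $v^*$ lies on the opposite side of $H'$ from $X$, are handled by the same expansion bound together with the observation that only the single vertex $v^*$ is being subtracted. The main obstacle, such as it is, is the routine bookkeeping of tracking which of the three bipartite graphs contains $v^*$ as $i_0$ ranges over $\{1,2,3\}$ and which side of $H'$ the set $X$ lies on; I do not anticipate any structural difficulty, since this claim is a base case whose proof amounts to unfolding the definitions of $m$-good and of bipartite $(m,D)$-expander.
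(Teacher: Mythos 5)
Your overall plan coincides with the paper's: check $m$-goodness in each of the three bipartite graphs, observe that for a single embedded vertex of degree zero the right-hand side of the $m$-good inequality equals $D|X|$ in every case, and bound the left-hand side by expansion. Your computation of the right-hand side is correct. The gap is precisely in the configuration you defer to at the end: when $v^*$ lies on the opposite side of $H'$ from $X$ and $v^*\in N_{H'}(X)$, the first property of a bipartite $(m,D)$-expander gives only $|N_{H'}(X)|\ge D|X|$, so $|N_{H'}(X)\setminus\{v^*\}|\ge D|X|-1$, which is one short of the required $D|X|$. Asserting that this case is ``handled by the same expansion bound together with the observation that only the single vertex $v^*$ is being subtracted'' does not close it: subtracting one vertex from a neighbourhood that may have size exactly $D|X|$ defeats the inequality.

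The paper's argument does not rest on the degree-expansion property alone. For $m\le|X|\le 2m$ it invokes the second property of the bipartite expander (any set of size $m$ on one side has fewer than $m$ non-neighbours on the other), which yields $|N(X)\cap V_3|\ge |V_3|-m+1\ge |U_3|+3Dm-m+1$; after discounting the single used vertex there is still slack of order $Dm$ above the required bound, and it is exactly the hypothesis $|V_i|\ge |U_i|+3Dm$ of Lemma~\ref{choosingwhere} that supplies this slack. Your write-up never uses this second property, so you have no mechanism to absorb the loss of $v^*$. (To be fair, for sets of size below $m$ only the degree-expansion property is available and the same off-by-one lurks in the paper's terse treatment; but your proposal explicitly claims the problematic case is resolved, and as written it is not. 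A repair would need either a ``$+1$'' of slack in the expansion hypothesis or an argument excluding $v^*$ from $N_{H'}(X)$.)
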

 \begin{proof}[Proof of Claim~\ref{claim:basecase}]
 We will only prove that $\varphi$ is $m$-good in $H[V_1,V_3]$, as the other cases are completely analogous. Since $H[V_1,V_3]$ is a bipartite $(m,D)$-expander, then for $X\subseteq V_1$, with $m\le|X|\leq 2m$, we have 
\[|(N(X)\cap V_3)\setminus \varphi(V(S))|\geq |V_3|-|S|-m+1,\]
\noindent which is larger than the required lower bound in the definition of $m$-goodness. Since the same bound holds if $X\subseteq V_3$, it follows that $\varphi$ is $m$-good in $H[V_1,V_3]$.
 \end{proof}
Now that we have proved the base case, we will prove that any $m$-great embedding of a subtree $S\subset T$ can be extended by adding a leaf. Let $s\in V(S)$ and $v\in V(T-S)$  such that $sv\in E(T)$. Assume we have an $m$-great embedding $\varphi:V(S)\to V(H)$ and we want to add $v$. We deal separately with the cases when $v \in U_3$ or $v \in U_1\cup U_2$. 

Suppose that $v\in U_3$. Since $H[V_1\cup V_2, V_3]$ is a $(m,D)$-expander, then for $X\subseteq V_1\cup V_2$ (and analogously for  $X\subseteq V_3$), with $m\leq |X|\leq 2m$, we have that
  \begin{equation} \label{psyduck}
     |(N(X)\cap V_3) \setminus \varphi(V(S))|\ge |V_3|-m+1-|U_3|\geq 3Dm-m+1 \geq 2Dm+2. 
  \end{equation}
Thus, by Lemma~\ref{goodhaxxel}, there exists an $m$-good embedding $\varphi':V(S+sv)\to V(H[V_1\cup V_2,V_3])$. We argue now that $\varphi'$ is $m$-good in $H[V_i,V_3]$, for $i\in\{1,2\}$. Indeed, given $X\subseteq V_i$ for some $i\in\{1,2\}$, we already know that $|(N(X)\cap V_3)\setminus \varphi'(V(S))|\ge 2Dm+2$  since $\varphi'$ is $m$-good in $H[V_1\cup V_2,V_3]$. For $X\subseteq V_3$ there is nothing to prove, since $\varphi$ was $m$-great and we did not use any additional vertices from either $V_1$ or $V_2$. 

  The case when $v\in U_1$ (resp. $v\in U_2$) is analogous, but we apply Lemma~\ref{goodhaxxel} to $\varphi$ in the bipartite graph $H[V_1,V_3]$ (resp. $H[V_2,V_3]$), together with the same calculation as in~\eqref{psyduck}, to get an $m$-good embedding $\varphi'$. Note that $\varphi'(v)\in V_1$ (resp. $\varphi'(v)\in V_2$). This guarantees that $\varphi'$ is $m$-good in $H[V_1,V_3]$ and $H[V_2,V_3]$. Moreover, for $H[V_1\cup V_2,V_3]$ we only need to guarantee the neighbourhood expansion for $X\subseteq V_3$ with $m\le |X|\leq 2m$. Note that since $\varphi'$ is $m$-good in $H[V_i,V_3]$ for $i\in\{1,2\}$ we have
  \[ |(N(X)\cap (V_1\cup V_2))\setminus \varphi'(V(S))|\geq |(N(X)\cap V_1)\setminus \varphi'(V(S))|\ge 2Dm+2,\]
 and thus $\varphi'$ is $m$-good in $H[V_1\cup V_2,V_3]$.
  \end{proof}
The last ingredient that we need for Theorem~\ref{manyleaves} is a well-known generalisation of Hall's theorem.
\begin{lemma}\label{lem:Hall}Let $G$ be a bipartite graph with parts $A=\{a_1,\dots,a_\ell\}$ and $B$. Let $(d_i)_{i \in [\ell]}$ be a sequence of non-negative integers and let $(S_i)_{i\in [\ell]}$ be a collection of stars such that $S_i$ has $d_i$ leaves for each $i\in [\ell]$. Then $G$ contains an embedding of $(S_i)_{i\in[\ell]}$, with the centres mapped into $A$, if and only if
	\begin{equation}
		|N(X)|\geq \sum_{x\in X}d_{x}\text{ for all }X\subseteq A.
	\end{equation}
\end{lemma}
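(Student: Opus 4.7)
The plan is to reduce the statement to the classical Hall's theorem via the standard vertex-splitting trick. Define an auxiliary bipartite graph $G'$ with parts $A'$ and $B$, where
\[ A' = \bigl\{ a_i^{(j)} : i \in [\ell],\ j \in [d_i] \bigr\}, \]
i.e.\ we replace each $a_i \in A$ by $d_i$ ``copies'', and we put an edge from $a_i^{(j)}$ to $b \in B$ in $G'$ whenever $a_ib \in E(G)$. The crucial observation is that an embedding of the stars $(S_i)_{i\in[\ell]}$ into $G$ with centres sent to $A$ corresponds bijectively to a matching in $G'$ that saturates $A'$: such an embedding must assign to each $a_i$ a set of $d_i$ distinct neighbours in $B$ (the leaves of $S_i$), and these sets must be pairwise disjoint across different $i$ because the embedding is injective.

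For the necessity direction, if such an embedding exists then for every $X \subseteq A$ the union of the leaves assigned to $\{S_i : a_i \in X\}$ is a subset of $N_G(X)$ of size $\sum_{x \in X} d_x$, giving the required inequality. For sufficiency, I would apply Hall's theorem to $G'$. Since $N_{G'}(a_i^{(j)}) = N_G(a_i)$ for every copy, the minimiser of $|N_{G'}(Y)| - |Y|$ over $Y \subseteq A'$ may be taken to include either all or none of the copies of each $a_i$; writing $X \subseteq A$ for the corresponding set of originals, the Hall condition in $G'$ reads $|N_G(X)| \ge \sum_{x \in X} d_x$, which is precisely the hypothesis.

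There is essentially no obstacle here: the argument is a routine reduction, and the only thing to double-check is the correspondence between matchings in $G'$ saturating $A'$ and embeddings of the star forest, which follows directly from the construction. The statement is included mainly as a convenient tool for the final Hall-type step in the embedding strategy described before Theorem~\ref{manyleaves}, so the proof can be kept short.
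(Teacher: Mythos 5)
Your reduction is correct: splitting each $a_i$ into $d_i$ copies turns the star-forest embedding into a matching saturating $A'$, the necessity direction is immediate, and the Hall condition for $G'$ follows from the stated hypothesis exactly as you argue. The paper gives no proof at all (it cites the lemma as a well-known generalisation of Hall's theorem), and your argument is precisely the standard one that justifies that citation, so there is nothing to reconcile.
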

\begin{proof}[Proof of Theorem~\ref{manyleaves}]
 Let $L$ be a set of $12Dm_2$ leaves of $T$ in the same bipartition class and let $U_1$ be the set of parents of $L$ in $T$. Note that $ 12m_2\leq |U_1|\leq 12Dm_2$. We choose, uniformly at random, a set $W\subseteq V$ with $r=|U_1|+4Dm_2$ vertices, and note that $r \leq 16Dm_2\leq n$ . For each set $X\subseteq V(H)$ with $m_1$ vertices, let $Z_X=\{y\in W\setminus X: d(y,X)=0 \}$ . Since $H$ is a weak $(m_1,n/32D)$-expander, then  
  \[\mathbb{E}|Z_X|\leq \dfrac{r}{n}\cdot \dfrac{n}{32D} \leq  \dfrac{m_2}{2}.\]
  By standard tail bounds for the hypergeometric distribution (see Theorem 2.10 in \cite{rucinskibook}), we have 
\[ \mathbb{P}(|Z_X|\geq m_2)\leq \exp\left(- \dfrac{m_2}{6}\right).\]
Denoting by $Z$ the number of sets $X\subseteq V(H)$ of size $m_1$ such that $|Z_X| \geq m_2$, we have 
  \[ \mathbb{E}[Z] \leq n^{m_1}\exp(-m_2/6) <1,\]
since $6m_1\log n<m_2$. This implies that there is a realisation of $W$, denoted by $W_1$, such that every subset $X\subseteq V(H)$ of size $m_1$ has less than $m_2$ non-neighbours in $W_1$. Set $T'=T-L$ and suppose that one of the bipartition classes of $T'$ is $U_1\cup U_2$ and the other is $U_3$. We take two disjoint sets $W_2, W_3\subseteq V(H)\setminus W_1$ such that $|W_i|= |U_i|+4Dm_2$ for $i\in\{2,3\}$, which is possible since in this case we have
\[|W_1|+|W_2|+|W_3| =   |T|-|L|+12Dm_2 \le n.\]
\begin{claim}\label{claim:bip expanders}
 For each $i\in\{1,2,3\}$ there exists $V_i\subseteq W_i$, with $|W_i\setminus V_i|\leq 2m_2$, such that the graphs $H[V_1\cup V_2,V_3]$, $H[V_1,V_3]$ and $H[V_2,V_3]$ are bipartite $(m_2,D)$-expanders.
\end{claim}
\begin{proof}[Proof of Claim~\ref{claim:bip expanders}]
Since $H$ is a weak $(m_2,m_2)$-expander,  the second property of the bipartite expansion is already satisfied for all the three bipartite graphs. We will find the sets $V_i$'s iteratively. We set $X_i=\emptyset$ and $V_i:=W_i$ for $i\in\{1,2,3\}$. While there exists
\begin{itemize}
\item $X\subseteq V_3$ with $|X|\leq m_2$ and $|N(X)\cap V_i|< D|X|$ for some $i\in\{1,2\}$, 
we set $X_i:=X_i\cup X$ and $V_3:=V_3\setminus X$, and
\item $X\subseteq V_1\cup V_2$ with $|X|\leq m_2$ and $|N(X)\cap V_3|< D|X|$, we
set $X_3:=X_3\cup X$ and $V_i:=V_i\setminus X$ for $i\in\{1,2\}.$
\end{itemize}
First, we show that at each step we have $|X_i|\leq m_2$ for $i\in\{1,2,3\}$, and that
\[|N(X_3)\cap W_3| < D|X| \quad \text{ and } \quad |N(X_i)\cap W_i|< D|X|\]
for $i\in\{1,2\}$. Indeed, if this is satisfied for some $X_1,X_2,X_3$ and there exists $X\subseteq V_1\cup V_2$ (or analogously for $X\subseteq V_3)$ with $|N(X)\cap W_3|< D|X|$, then clearly we have that
\[|N(X_3\cup X)\cap V_3| \le |N(X_3)\cap W_3| + |N(X)\cap W_3| 
                    < D|X_3| + D|X| = D|X_3\cup X|.\]
If we had that $|X|\geq m_2$, then by the weak $(m_2,m_2)$-expansion of $H$, $X$ would have less than $m_2$ non-neighbours in $V_3$ and therefore we would have that
\[|N(X)\cap V_3|\geq |V_3|-m_2 \geq 2Dm_2\geq D|X|+1,\]
which contradicts the choice of $X$. This finishes the proof since $|X_1\cup X_2|,|X_3|\leq 2m_2$. 
\end{proof}
Let $V_i\subseteq W_i$ be the sets given by Claim~\ref{claim:bip expanders} for $i\in\{1,2,3\}$ so that $H[V_1\cup V_2,V_3]$, $H[V_1,V_3]$ and $H[V_2,V_3]$ are bipartite $(m_2,D)$-expanders. Observe that 
\[|V_i|\geq |U_i|+4Dm_2-2m_2 \geq |U_i|+3Dm_2,\]
for $i\in\{1,2,3\}$ which, by~\ref{choosingwhere}, implies that we can find an $m_2$-good embedding $\varphi':V(T')\to V(H)$ such that $\varphi'(U_i)\subseteq V_i$ for $i\in\{1,2,3\}$. To finish the embedding of $L$, we will use Lemma~\ref{lem:Hall} in the bipartite graph $H[\varphi'(U_1),V(H)\setminus \varphi'(V(T'))]$.

Note that the condition of Lemma~\ref{lem:Hall} is satisfied for every subset $X\subseteq \varphi'(U_1)$ with $|X|\leq m_2$ as $\varphi'$ is $m_2$-good and $\Delta(T)\le D$. Moreover, since $\varphi'(U_1)\subseteq W_1$ and by the choice of $W_1$, every subset $X\subseteq \varphi'(U_1)$, with $|X|\geq m_2$, has less than $m_1$ non-neighbours and therefore
\[|N(X)\cap V(H)\setminus \varphi'(V(T')) |\geq |V(H)\setminus \varphi'(V(T'))|- m_1 \geq |L|, \]
as $|T'|=|T|-|L|=n-m_1-|L|$. Then Lemma~\ref{lem:Hall} implies we can finish the embedding of $L$ and thus finish the proof.

\end{proof}

\section{Proof of Theorem \ref{nstab}}\label{sec:nstab}

The proof of Theorem \ref{nstab} follows by applying Proposition \ref{indhaxxel} $r+1$ times. For an appropriate choice of $m_1$ and $m_2$ there will be two possibilities. If the red graph is a weak $(m_1,m_2)$-expander, then,  using Theorem \ref{haxxel}, we show that it is $\mathcal{T}(n,D)$-universal. Otherwise it will contain two disjoint sets of size $m_1$ and $m_2$, respectively, with all edges in between coloured in blue. We repeat this argument $r$ times in the induced graph on the set with $m_2$ vertices. 
At the end of this process, if the red graph is not $\mathcal{T}(n,D)$-universal, then we get $r+1$ disjoint sets, each of size $m_1$, with all the edges in between coloured in blue. This reasoning is made precise in the proof of the following lemma.

\begin{lemma}
\label{weaklyclique}
  Let $n,m,r,D=D(n)$ be positive integers and let $H$ be a graph on $N=rn+10Drm$ vertices. Then one of the following holds:
  \begin{enumerate}
      \item $H$ is $\mathcal{T}(n,D)$-universal.
      \item There are disjoint sets $U_1,\dots,U_{r+1}\subseteq V(H)$, each of size $m$, such that $e(V_i,V_j)=0$ for $1\leq i<j\leq r+1$.
  \end{enumerate}
\end{lemma}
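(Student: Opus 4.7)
My plan is to prove this lemma by induction on $r$, where at each step I extract an ``edgeless'' bipartite pair using a variant of Haxell's theorem and then recurse. Concretely, the core of the proof is an auxiliary claim: if $H'$ is any graph on at least $n + m_2 + 3(D+1)m$ vertices (for some positive integer $m_2$), then either $H'$ is $\mathcal{T}(n,D)$-universal, or there exist disjoint $U,W\subseteq V(H')$ with $|U|=m$, $|W|=m_2$, and $e_{H'}(U,W)=0$. With this claim in hand, the lemma follows: for $r\ge 2$ I apply it to $H$ with $m_2 := (r-1)n+10D(r-1)m$ (the vertex count $N=rn+10Drm$ comfortably meets the requirement, since $7D\ge 3$), obtaining either universality of $H$ or an edgeless pair $(U_1,W_1)$. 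In the latter case, the inductive hypothesis applied to $H[W_1]$ yields $r$ disjoint size-$m$ sets $U_2,\dots,U_{r+1}\subseteq W_1$ with pairwise no edges, and since $e_H(U_1,W_1)=0$ these combine with $U_1$ to give the desired $r+1$ sets. The base case $r=1$ applies the claim with $m_2=m$.

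To prove the auxiliary claim I split into two cases. If $H'$ is not a weak $(m,m_2)$-expander, then by definition an edgeless pair of the required sizes exists and we are done. Otherwise I aim to verify the hypotheses of Theorem~\ref{haxxel} (with Haxell's parameter $m$ and tree size $t=n$) on a suitable induced subgraph of $H'$. Condition~(2) of Haxell, for sets of size between $m+1$ and $2m$, will follow readily from the weak $(m,m_2)$-expansion via the reformulation~(\ref{expanders}) combined with the lower bound on $|V(H')|$. The genuinely subtle point is condition~(1): it may fail via a small set $X$ with $|X|\le m$ and $|N(X)|\le D|X|$, and crucially this $X$ need not satisfy $|X|\ge m$, so I cannot simply take $U:=X$ to get the edgeless pair.

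To overcome this I will run an accumulation procedure: initialise $V^*:=V(H')$ and $S:=\emptyset$, and while $|S|<m$ and some $X\subseteq V^*$ violates Haxell's condition~(1) inside $H'[V^*]$, set $S:=S\cup X$ and $V^*:=V^*\setminus(X\cup N_{H'[V^*]}(X))$. The key property is that after the procedure, $S$ has no edges in $H'$ to $V^*$, since each added set has its external neighbours (in the then-current $H'[V^*]$) removed in the same iteration. The procedure terminates in one of two ways. Either $|S|\ge m$, in which case the vertex budget forces $|V^*|\ge m_2$ and we extract $U\subseteq S$, $W\subseteq V^*$ of the required sizes. Or $|S|<m$, in which case at most $(D+1)(m-1)$ vertices have been removed, so $H'[V^*]$ satisfies condition~(2) of Haxell by the same calculation applied to $H'$, and it satisfies condition~(1) by the termination of the loop; Theorem~\ref{haxxel} then yields that $H'[V^*]$, and hence $H'$, is $\mathcal{T}(n,D)$-universal. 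The main obstacle throughout is this balancing act: the constant $10$ in $N=rn+10Drm$ is precisely what is needed to accommodate both possible terminations of the accumulation procedure at every inductive level.
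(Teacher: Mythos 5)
Your proof is correct, and its overall architecture coincides with the paper's: induct (the paper iterates) over $r$, and at each level either the graph fails to be a weak $(m,m_2)$-expander for a suitable $m_2$ of the form $(r-1)n+\Theta(Drm)$ --- yielding the edgeless pair directly --- or weak expansion plus Haxell's theorem forces $\mathcal T(n,D)$-universality. The one place where you genuinely diverge is in securing Haxell's small-set condition~(1). The paper's Proposition~\ref{indhaxxel} deletes a \emph{single maximal} set $Z$ with $|Z|\le m$ and $|N(Z)|<D|Z|$, and then uses maximality together with the weak $(m,m_2)$-expansion to argue that any remaining violating set must have size exceeding $m$; no further deletions are needed. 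You instead run an iterative stripping procedure, removing each violating set \emph{together with its neighbourhood}, and exploit the dichotomy at termination: if little is stripped you get an expander, and if the stripped core $S$ reaches size $m$ it has, by construction, no edges to the surviving $|V^*|\ge m_2$ vertices, so the edgeless pair falls out of the procedure itself rather than from a failure of weak expansion. Your route costs a slightly more delicate vertex-budget bookkeeping (the factor $3(D+1)m$, absorbed by the paper's $10Drm$ slack) but has the mild advantage that the expander-extraction step does not itself rely on weak expansion; the paper's maximality trick is shorter. Both arguments are sound and the quantitative requirements are met at every level.
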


Before proving Lemma \ref{weaklyclique} we need to show that weak expander graphs contains an almost spanning expander.

\begin{prop}
  \label{indhaxxel}
Let $D,m_1, m_2$ be integers and let $H=(V,E)$ be a graph with $|V|\geq m_2+(2D+2)m_1 $. If $H$ is a weak $(m_1,m_2)$-expander, then there exists a set $V'\subseteq V$, with $|V\setminus V'|\leq m_1$, such that $H[V']$ is a $(m_1,m_2,D)$-expander.
\end{prop}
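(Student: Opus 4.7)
The plan is to build $V'$ by iteratively deleting small ``bad'' sets of vertices that witness a failure of property~\eqref{strong}, and to use weak $(m_1,m_2)$-expansion to prove that the process terminates after removing at most $m_1$ vertices in total. Property~\eqref{weak} for $H[V']$ will then be inherited from $H$, since any two disjoint sets of sizes $m_1$ and $m_2$ inside $V'$ are also disjoint subsets of $V$.

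More precisely, I would set $V_1=V$, and at step $i\ge 1$ ask whether there exists $X_i\subseteq V_i$ with $1\le |X_i|\le m_1$ and $|N_{H[V_i]}(X_i)|\le D|X_i|$. If no such $X_i$ exists, then $H[V_i]$ satisfies property~\eqref{strong}, so I set $V'=V_i$ and stop. Otherwise, I pick such an $X_i$, set $V_{i+1}=V_i\setminus X_i$, and continue. Writing $Z_i=X_1\cup\cdots\cup X_i$, the key structural observation is that any external neighbour $v\in N_H(Z_s)$ can be traced back to the \emph{first} index $i\le s$ for which $v$ has a neighbour in $X_i$; since $v\notin Z_s\supseteq Z_{i-1}\cup X_i$, we have $v\in V_i\setminus X_i$ and hence $v\in N_{H[V_i]}(X_i)$. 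This gives the containment
\[
N_H(Z_s)\subseteq \bigcup_{i=1}^s N_{H[V_i]}(X_i),
\qquad\text{so}\qquad
|N_H(Z_s)|\le \sum_{i=1}^s D|X_i|=D|Z_s|.
\]

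Next I would prove by contradiction that $|Z_i|\le m_1$ at every step, so that the process removes at most $m_1$ vertices. If instead there is a first index $s$ with $|Z_s|>m_1$, then by minimality $|Z_{s-1}|\le m_1$ and $|X_s|\le m_1$, so $m_1<|Z_s|\le 2m_1$ and $|N_H(Z_s)|\le 2Dm_1$. Consequently
\[
|V\setminus(Z_s\cup N_H(Z_s))|\ge |V|-(2D+2)m_1\ge m_2,
\]
using the hypothesis on $|V|$. Choosing any $X\subseteq Z_s$ with $|X|=m_1$ and any $Y\subseteq V\setminus(Z_s\cup N_H(Z_s))$ with $|Y|=m_2$ produces disjoint sets of the required sizes with $e_H(X,Y)=0$, contradicting the weak $(m_1,m_2)$-expansion of $H$.

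The only delicate point, and the one I would be most careful about, is the containment $N_H(Z_s)\subseteq \bigcup_i N_{H[V_i]}(X_i)$: in the induced subgraphs one must keep straight that ``external'' means external in $H[V_i]$, not in $H$, and that vertices in $Z_{i-1}$ are already deleted by the time we examine $X_i$. Once this bookkeeping is settled, verifying that $|V_i|\ge m_1+m_2$ throughout (so that the weak expansion property is not vacuous) and that property~\eqref{weak} passes to $V'$ are both immediate from $|V\setminus V'|\le m_1$ and the hypothesis $|V|\ge m_2+(2D+2)m_1$.
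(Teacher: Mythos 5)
Your proof is correct and takes essentially the same approach as the paper: delete expansion-violating sets greedily and use the weak $(m_1,m_2)$-expansion together with $|V|\ge m_2+(2D+2)m_1$ to cap the total deletion at $m_1$ vertices. The only cosmetic difference is that the paper removes a single \emph{maximal} violating set $Z$ in one shot and then argues that any remaining violating set in $H[V\setminus Z]$ must have size greater than $m_1$, whereas you run an explicit iteration and derive a contradiction at the first step where the union exceeds $m_1$; the underlying mechanism (the union of violating sets is itself violating, and a violating set of size in $(m_1,2m_1]$ leaves room for an $m_2$-set with no edges to an $m_1$-set) is the same.
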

\begin{proof}
  Take a maximal set $Z\subseteq V$ with $1\le |Z|\le m_1$ and $|N(Z)|< D|Z|$, and set $V'=V\setminus Z$. We will prove that for any $X\subseteq V'$ with $|N(X)\cap V'|< D|X|$ we have that $|X|> m_1$, which shows that $H[V']$ is a $(m_1,m_2,D)$-expander. Observe that for such $X$ we have
  \[|N(X\cup Z)|< D|X\cup Z|.\]
  By the maximality of $Z$, we conclude that $|X\cup Z|> m_1$. Since $H$ is a weak $(m_1,m_2)$-expander, there are less than $m_2$ non-neighbours of $X\cup Z$ in $V'\setminus (X\cup Z)$ and then
  \begin{equation*}
    \begin{split}
      D|X|  > |N(X)\cap V'| 
           & \geq |N(X\cup Z)\cap V'| - |N(Z)\cap V'|\\
           &> |V'|- m_2 +1 - |X|-D|Z|  \\
           &= |V|-m_2+1-(D+1)m_1 -|X|.
    \end{split}
  \end{equation*}
Using that $|V|\ge m_2+(2D+2)m_1$ we get $D|X|>(D+1)m_1 - |X|$ which implies $|X|>m_1$ and finishes the proof.
\end{proof}
Now we move to the proof of Lemma \ref{weaklyclique}.

\begin{proof}[Proof of Lemma \ref{weaklyclique}]
   We assume that $H$ is not $\mathcal{T}(n,D)$-universal and set $V_0=V(H)$. We will prove that for $s\in[r]$ there exist disjoint sets $U_s$ and $V_s$  with
   \[|U_s|=m \quad \text{and} \quad |V_s|= (r-s)n + (r-s+1)5Dm,\]
    \noindent such that $e(U_s,V_s)=0$ and $U_s,V_s\subseteq V_{s-1}$. Indeed, if this is true, we set $U_{r+1}=V_r$ and get that $e(U_i,U_j)=0$ for every $1\leq i<j\leq r+1$, which is what we want to prove. We remark that throughtout this section, $D$ does not need to be a constant.

Suppose we have found sets $V_0, U_1,V_1,\cdots, U_s,V_s$ as above for some $s\in[r]$, or just $V_0$ for $s=0$, and let us show how to find $U_{s+1}$ and $V_{s+1}$. Let $m_s= (r-s-1)n +(r-s)5Dm$ and suppose that $H[V_s]$ is not a weak $(m,m_s)$-expander. Then there are disjoint sets $U_{s+1},V_{s+1}\subseteq V_s$ of size $m$ and $m_s$, respectively, such that $e(U_{s+1},V_{s+1})=0$. Therefore, we only need to prove that $H[V_s]$ is not a weak $(m,m_s)$-expander. 

Now, we show that if $H[V_s]$ were a weak $(m,m_s)$-expander, then it would be $\mathcal{T}(n,D)$-universal, which we assumed not to be true. To prove that, we first note that 
  \[  |V_s|-m_s = n+5Dm. \]
Since $|V_s|\geq (D+2)m+m_s$,  there exists a subset $V'_s\subseteq V_s$ such that $|V_s\setminus V'_s|\leq m$ and $H[V'_s]$ is $(m,m_s,D)$-expander. As reasoned in~\eqref{expanders}, for a set $X\subseteq V_{s}'$, with $m\leq |X| \leq 2m$, the $(m,m_s,D)$-expansion implies that
\begin{equation*}
 \begin{split}
     |N(X)\cap V'_s|\geq |V'_s| - m_2-|X|+1
                    &\geq |V_s| - m-m_2-2m+1\\
                    &\geq n+5Dm-3m+1\\
                    &\geq n + D|X|+1.
 \end{split}
\end{equation*}

\noindent The above inequality and the first property of the $(m,m_s,D)$-expansion imply,  by Theorem \ref{haxxel}, that  $H[V'_i]$ is $\mathcal{T}(n,D)$-universal.   
\end{proof}

Lemma \ref{weaklyclique} reduces the proof of Theorem \ref{nstab} to finding the minimum value $m$ such that every collection of $r+1$ disjoint $m$-sets, with high probability, span a copy of $K_{r+1}$ in $G(N,p)$ with one vertex in each $m$-set. Such a copy of $K_{r+1}$ will be called a \textit{canonical copy}. To do this we have the following lemma, whose proof is a standard application of Janson's inequality and therefore we omit it.
\begin{lemma}
      \label{Janson}
    Let $r\ge 2$ and let $G = G(N,p)$, with $p\gg N^{-2/(r+1)}$. Fix a disjoint collection $V_1,\dots,V_{r+1}
    \subseteq V(G)$, with $|V_i|=m_i$ for $i\in[r+1]$. Then the probability that $V_1,\cdots,V_{r+1}$ spans a canonical copy of $K_{r+1}$ is at least 
    \[ 1-\exp\left(-\Omega\left(p^{\binom{r+1}{2}} \prod_{i=1}^{r+1}m_i \right) \right).\]
    In particular, there exists a constant $C>0$ such that if an integer $m$ satisfies
    \begin{equation}
     \label{jansoncond}   
     m^{r+1}p^{\binom{r+1}{2}}\geq C\log{\binom{N}{m}}, 
     \end{equation}
then with high probability there exists a canonical copy of $K_{r+1}$ in every collection of $r+1$ disjoint $m$-sets.
\end{lemma}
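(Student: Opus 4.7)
The plan is a standard Janson calculation followed by a union bound over collections of disjoint $m$-subsets.

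For each tuple $a = (v_1, \ldots, v_{r+1}) \in V_1 \times \cdots \times V_{r+1}$ let $I_a$ denote the indicator of the event that all $\binom{r+1}{2}$ edges among $v_1, \ldots, v_{r+1}$ are present in $G = G(N,p)$, and set $X = \sum_a I_a$. Then $\mu := \E[X] = p^{\binom{r+1}{2}}\prod_{i=1}^{r+1} m_i$, and a canonical copy of $K_{r+1}$ exists iff $X \ge 1$, so it suffices to upper bound $\P(X = 0)$. Janson's inequality yields
$$\P(X = 0) \;\le\; \exp\!\Big(-\frac{\mu^2}{2(\mu + \Delta)}\Big), \qquad \Delta := \sum_{\substack{a \neq b \\ a \sim b}} \E[I_a I_b],$$
where $a \sim b$ means the two canonical copies share at least one common edge (equivalently, at least two common vertices).

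To bound $\Delta$, we classify intersecting pairs by the number $k \in \{2, \ldots, r\}$ of shared vertices. Such a pair shares exactly $\binom{k}{2}$ common edges, contributing $p^{2\binom{r+1}{2} - \binom{k}{2}}$ per pair; a direct count of the ordered pairs, combined with the identity $\binom{r+1}{2} - \binom{k}{2} = (r+1-k)(r+k)/2$, gives
$$\frac{\Delta}{\mu} \;\le\; \sum_{k=2}^{r} \binom{r+1}{k}\Big(\max_i m_i \cdot p^{(r+k)/2}\Big)^{r+1-k}.$$
A short case analysis on $k$, using $m_i \le N$ and $p \gg N^{-2/(r+1)}$, bounds each summand by a constant depending only on $r$, so $\Delta = O(\mu)$. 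Plugging into Janson yields $\P(X = 0) \le \exp(-c\mu)$ for some $c = c(r) > 0$, which is the first statement of the lemma.

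For the ``in particular'' part, apply the first statement to each of the at most $\binom{N}{m}^{r+1}$ collections of $r+1$ pairwise disjoint $m$-subsets of $V(G)$. By a union bound, the probability that some such collection contains no canonical copy of $K_{r+1}$ is at most $\binom{N}{m}^{r+1}\exp(-c\,m^{r+1}p^{\binom{r+1}{2}})$, and this is $o(1)$ as soon as the constant $C$ in \eqref{jansoncond} is chosen larger than $(r+2)/c$. The only technical step is the verification that $\Delta = O(\mu)$; the identity for $\binom{r+1}{2} - \binom{k}{2}$ above makes the bookkeeping clean, and the threshold $p \gg N^{-2/(r+1)}$ is precisely what renders the worst-case term (at $k = 2$) bounded, everything else being routine.
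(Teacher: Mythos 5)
Your framework (Janson's inequality plus a union bound over the at most $\binom{N}{m}^{r+1}$ collections) is exactly the ``standard application'' the paper has in mind when it omits this proof, and your formula for $\Delta/\mu$ is correct. The gap is the step where you claim each summand $\binom{r+1}{k}\bigl(\max_i m_i\cdot p^{(r+k)/2}\bigr)^{r+1-k}$ is $O(1)$: to bound it you would need an \emph{upper} bound $p\lesssim (\max_i m_i)^{-2/(r+k)}$, whereas the hypothesis $p\gg N^{-2/(r+1)}$ is a \emph{lower} bound on $p$ and only makes these terms larger. In the regime $p=\Theta(1)$ and $m_i=\Theta(N)$ (which is allowed by the statement and occurs in the paper's applications) the $k=2$ term is $\Theta\bigl(N^{r-1}p^{\binom{r+1}{2}-1}\bigr)\to\infty$, so $\Delta=O(\mu)$ fails and $\P(X=0)\le\exp(-c\mu)$ does not follow. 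Indeed that bound is false there: the event $e(V_1,V_2)=0$ already forces $X=0$ and has probability $(1-p)^{m_1m_2}=\exp(-\Theta(N^2))$, which is far larger than $\exp(-\Omega(N^{r+1}))$ when $r\ge 2$. (So the first display of the lemma should itself be read with the full Janson exponent $\mu^2/(\mu+\Delta)$ in place of $\mu$.)

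The repair is to not discard $\Delta$. Keep $\P(X=0)\le\exp\bigl(-\mu^2/(2(\mu+\Delta))\bigr)$ and note that for $m_1=\dots=m_{r+1}=m$ your own formula gives $\mu^2/(\mu+\Delta)\ge c_r\min_{2\le k\le r+1}m^kp^{\binom{k}{2}}$; since $k\mapsto k\log m+\binom{k}{2}\log p$ is concave, the minimum is attained at an endpoint, i.e.\ it equals $c_r\min\bigl(m^2p,\,m^{r+1}p^{\binom{r+1}{2}}\bigr)$. The union bound therefore requires \emph{both} quantities to exceed a large multiple of $\log\binom{N}{m}$: condition~\eqref{jansoncond} supplies the second, and one must check separately that $m^2p\ge C\log\binom{N}{m}$ in the regimes where the lemma is invoked (this is where the lower bound on $p$ and the size of $m$ actually enter, not in making $\Delta$ small).
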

Now we may state a stronger version of Theorem \ref{nstab}, with $t=O(m)$ and $m$ satisfying \eqref{jansoncond}. Note that when $t=\Omega(N)$, condition~\eqref{jansoncond} is equivalent to say that $p\geq CN^{-2/(r+1)}$, for some $C>0$.

\begin{theorem}
\label{nstabv2}
For every $r,D=D(n)\geq2$ and for every $p=p(n)$ and $m$ satisfying \eqref{jansoncond}, if
\[N \geq rn + 10Drm,\]
then $G(N,p) \rightarrow \big(K_{r+1},\mathcal{T}(n,D)\big)$ with high probability.
\end{theorem}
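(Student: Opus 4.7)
The statement follows by combining the two lemmas just proved, Lemma~\ref{weaklyclique} and Lemma~\ref{Janson}, so the proof is essentially a matter of bookkeeping. The plan is to fix a typical outcome of $G = G(N,p)$ on which the Janson-type conclusion of Lemma~\ref{Janson} holds, namely that every collection of $r+1$ pairwise disjoint vertex sets of size $m$ in $G$ spans at least one canonical copy of $K_{r+1}$. Since $m$ is assumed to satisfy condition~\eqref{jansoncond}, Lemma~\ref{Janson} guarantees this event has probability $1-o(1)$.

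Conditioning on this event, I would then take an arbitrary blue-red colouring of $E(G)$ and let $H \subseteq G$ denote the red subgraph. The claim is that either $H$ is $\mathcal{T}(n,D)$-universal (so the colouring contains a red copy of every tree in $\mathcal{T}(n,D)$ and we are done), or the colouring contains a blue $K_{r+1}$. To see the dichotomy, apply Lemma~\ref{weaklyclique} to $H$: since $H$ has $N \ge rn + 10Drm$ vertices, either $H$ is $\mathcal{T}(n,D)$-universal, or there exist pairwise disjoint sets $U_1,\dots,U_{r+1} \subseteq V(H)$, each of size $m$, with $e_H(U_i,U_j)=0$ for all $1 \le i < j \le r+1$. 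In the second case, every edge of $G$ between distinct $U_i$ and $U_j$ must be blue.

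Finally, by the Janson event we conditioned on, the tuple $(U_1,\dots,U_{r+1})$ spans a canonical copy of $K_{r+1}$ in $G$, meaning a clique with exactly one vertex in each $U_i$. All the edges of this $K_{r+1}$ lie between distinct $U_i$'s, hence are blue, producing a blue copy of $K_{r+1}$. This contradicts the assumption that the colouring avoids a blue $K_{r+1}$, so $H$ must have been $\mathcal{T}(n,D)$-universal, which proves $G \to (K_{r+1}, \mathcal{T}(n,D))$.

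There is no real obstacle here: the nontrivial work has already been done in Theorem~\ref{haxxel} and in Lemma~\ref{weaklyclique} (via Proposition~\ref{indhaxxel}), which iteratively extract either an expander certifying universality or $r+1$ pairwise non-adjacent large sets in the red graph, while Lemma~\ref{Janson} handles the probabilistic input. The only point that needs a brief sanity check is that the hypothesis $N \ge rn + 10Drm$ lines up exactly with the hypothesis of Lemma~\ref{weaklyclique}, and that the value of $m$ assumed in the theorem is precisely the one for which Lemma~\ref{Janson} yields the desired canonical-clique property with probability $1-o(1)$; both are immediate from the statements.
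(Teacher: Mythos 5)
Your proposal is correct and matches the paper's own proof essentially verbatim: condition on the Janson event from Lemma~\ref{Janson}, apply Lemma~\ref{weaklyclique} to the red graph, and observe that the resulting $r+1$ pairwise non-red-adjacent $m$-sets span a canonical (hence blue) copy of $K_{r+1}$. No issues.
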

\begin{proof}
Let $G=G(N,p)$, where $N=rn+ 10Drm$, and consider the event in which every collection of $r+1$ disjoint sets of size $m$ span a canonical copy of $K_{r+1}$. By Lemma \ref{Janson} and the hypothesis on $m$, this happens with high probability. Let  $G_R$, $G_B\subseteq G$ be the red and blue graphs in a given edge colouring of $G$. By  Lemma \ref{weaklyclique}, if $G_R$ is not $\mathcal{T}(n,D)$-universal, then there are disjoint sets $U_1,\dots,U_{r+1}$ of size $m$ such that $e_R(U_i,U_j)=0$ for all $1\leq i<j\leq r+1$. In other words, all the edges in between these sets are coloured blue, which spans a blue copy of $K_{r+1}$ by the choice of $m$.
\end{proof}

\section{Regularity and facts about the random graph}\label{sec:tools}
In this section we state some tools needed for the proof of Theorem~\ref{stab} and Theorem~\ref{resilience}. 

\subsection{The sparse random Erd\H{o}s--Simonovits stability theorem}  The following result is one of a series of random analogues of extremal results proved, independently, by Conlon and Gowers~\cite{conlongowers} and by Schacht~\cite{schacht}.

\begin{theorem}
\label{erdsim}
For every $r\geq 2$ and $\varepsilon>0$, there are positive numbers
$C'$ and $\delta$ such that for $p\geq C'N^{-2/(r+2)}$ 
the following holds. With high probability, every $K_{r+1}$-free subgraph $G$ of $G(N,p)$ with
\[ e(G)\ge \left( 1 - \dfrac{1}{r} - \delta \right)p\binom{N}{2}\]
\noindent can be made $r$-partite by removing at most $\varepsilon pN^2$ edges.
\end{theorem}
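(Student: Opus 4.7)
The plan is to follow the standard route that reduces a sparse random extremal-stability statement to its dense counterpart via the sparse regularity lemma of Kohayakawa and Rödl, using the sparse counting (``\text{K\L R}'') lemma as the crucial bridge.

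First I would apply the sparse regularity lemma to the $K_{r+1}$-free graph $G\subseteq G(N,p)$, obtaining an equipartition $V(G)=V_1\cup\cdots\cup V_k$ into a bounded number of clusters such that all but at most $\eps_0 k^2$ pairs $(V_i,V_j)$ are $(\eps_0,p)$-regular with respect to $G$. On the set $[k]$ define the \emph{reduced graph} $R$ by placing an edge $ij$ whenever $(V_i,V_j)$ is $(\eps_0,p)$-regular in $G$ and has $G$-density at least $\eta p$, where $\eta=\eta(\eps)>0$ is a small parameter to be fixed below. A routine double count, using the hypothesis $e(G)\ge (1-1/r-\delta)p\binom{N}{2}$ together with the upper bound $e(G(N,p))\le (1+o(1))p\binom{N}{2}$ (which holds w.h.p.), produces $e(R)\ge (1-1/r-\delta')\binom{k}{2}$ for some $\delta'=\delta'(\delta,\eta,\eps_0)$ that can be made as small as we wish by taking $\delta,\eta,\eps_0$ small.

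The key step, and the one from which the threshold $p\ge C'N^{-2/(r+2)}=C'N^{-1/m_2(K_{r+1})}$ arises, is to prove that the reduced graph $R$ is itself $K_{r+1}$-free. This follows from the sparse counting/embedding lemma for cliques in $G(N,p)$, formerly the K\L R conjecture of Kohayakawa, \L uczak and Rödl and now a theorem, provable either by the hypergraph containers method of Balogh--Morris--Samotij and Saxton--Thomason, or via the transference principles of Conlon--Gowers and Schacht. Quantitatively, w.h.p.\ over $G(N,p)$, any $r+1$ clusters that are pairwise $(\eps_0,p)$-regular with density at least $\eta p$ inside $G$ must span a copy of $K_{r+1}$ in $G$; since $G$ has none, $R$ cannot contain $K_{r+1}$.

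At this point the dense (classical) Erd\H{o}s--Simonovits stability theorem applies to $R$: the bounds $e(R)\ge (1-1/r-\delta')\binom{k}{2}$ and $K_{r+1}\not\subseteq R$ yield a partition $[k]=P_1\cup\cdots\cup P_r$ such that $R$ has at most $(\eps/4)k^2$ edges inside the $P_s$'s, provided $\delta'$ is small enough. Lifting this to $V(G)$ via $W_s=\bigcup_{i\in P_s}V_i$, the edges of $G$ trapped inside some $W_s$ come from four sources: (a) edges inside a single cluster, (b) edges inside an irregular pair, (c) edges inside a regular pair of density $<\eta p$, and (d) edges inside regular pairs whose reduced edge lies within some $P_s$. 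The standard estimates $O(k^{-1})\cdot p N^2$, $\eps_0 p N^2$, $\eta p N^2$, $(\eps/4) p N^2$ on each source show that the total is at most $\eps p N^2$ once the parameters are tuned.

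The hard step is, unsurprisingly, the sparse counting lemma used to conclude that $R$ is $K_{r+1}$-free: it is the only place the threshold $p\ge C'N^{-2/(r+2)}$ is actually exploited, and it is the ingredient whose proof (via containers or transference) contains the real difficulty. Once it is granted, the rest of the argument is a routine ``reduction to dense stability'' through the sparse regularity lemma, and the choice of parameters $\delta\ll\delta'\ll\eta\ll\eps_0\ll\eps$ can be carried out in the standard way.
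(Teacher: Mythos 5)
The paper does not actually prove Theorem~\ref{erdsim}: it imports it as a black box, attributed to Conlon--Gowers and to Schacht, whose arguments are transference principles that carry the dense Erd\H{o}s--Simonovits theorem over to sparse random hosts directly (the stability version of Schacht's framework is due to Samotij). Your route --- sparse regularity, a reduced graph shown to be $K_{r+1}$-free via the embedding consequence of the K\L R conjecture, the dense stability theorem applied to the reduced graph, and a lift back to $V(G)$ --- is a genuinely different but equally standard derivation, and your outline is essentially correct: you locate precisely where the hypothesis $p\ge C'N^{-2/(r+2)}=C'N^{-1/m_2(K_{r+1})}$ is used, and the bookkeeping in the lifting step is routine (you should also account for edges meeting the exceptional class $V_0$ of the regular partition, and note that the embedding corollary of K\L R needs the clusters to have linear size, which the bounded number of parts guarantees; both are minor). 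The trade-off between the two approaches is that yours modularises all of the difficulty into the sparse embedding lemma --- itself a deep theorem, provable by hypergraph containers or by the same transference machinery --- and in exchange gives a transparent reduction to the dense statement, whereas the cited proofs bypass sparse regularity entirely and obtain the sparse stability theorem in one step; neither is more elementary, since the hard core is comparable, but your version makes the dependence of $\delta$ and $C'$ on $\eps$ and $r$ easy to trace through the parameter hierarchy $\delta\ll\delta'\ll\eta\ll\eps_0\ll\eps$.
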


    \subsection{Sparse regularity}The proof of Theorem~\ref{resilience} relies on a sparse version of the Szemer\'edi's Regularity lemma. In order to state this result, we need  some basic definitions.
\begin{definition} Let $\eta,p\in (0,1)$. We say that an $n$-vertex graph $G$ is $(\eta,p)$\textit{-uniform}, if all disjoint sets $A,B\subseteq V(G)$ with $|A|,|B|\geq \eta n$ satisfy
  \begin{equation}\label{uniform:1}
  (1-\eta)p|A||B| \leq e_G(A,B)\leq (1+\eta)p|A||B|\end{equation}
\noindent and
  \begin{equation}\label{uniform:2}(1-\eta)p\dbinom{|A|}{2} \leq e_G(A)\leq (1+\eta)p\dbinom{|A|}{2}.\end{equation}
\noindent Furthermore, we say that $G$ is $(\eta,p)$\textit{-upper-uniform} if (possibly) only the upper bounds in~\eqref{uniform:1} and~\eqref{uniform:2} hold for all $A,B\subseteq V(G)$ as above.

\end{definition}
    Let $G$ be a graph and let $p\in(0,1)$. Given two disjoint sets $A,B\subseteq V(G)$, we define the $p$-density of the pair $(A,B)$ by
    \begin{equation*}d_p(A,B)=\frac{e(A,B)}{p|A||B|}.\end{equation*}
Given $\varepsilon >0$, we say that the pair $(A,B)$ is $(\varepsilon ,p)$-regular if for all $A'\subseteq A$ and $B'\subseteq B$, with $|A'|\ge \varepsilon|A|$ and $|B'|\ge \varepsilon|B|$, we have
    \[|d_p(A',B')-d_p(A,B)|\leq \varepsilon.\]
\noindent Now we state some standard results regarding properties of regular pairs (we refer to the survey~\cite{gerke_steger_2005} for the proofs).

\begin{lemma}\label{lem:regularpairs}
      Given $0<\varepsilon<\alpha$, let $G$ be a graph and let $A,B\subseteq V(G)$ be disjoint sets such that $(A,B)$ is $(\varepsilon,p)$-regular with $d_p(A,B)=d>0$. Then the following are true.
      \begin{enumerate}
      \item For any $A'\subseteq A$ with $|A'|\geq \alpha |A|$ and $B'\subseteq B$ with $|B'|\geq \alpha |B|$, the pair $(A',B')$ is $(\varepsilon/\alpha,p)$-regular with $p$-density at least $d-\varepsilon$.

      \item There are at most $\varepsilon |A|$ vertices in $A$ with less then $(d-\varepsilon)p|B|$ neighbours in $B$.
      \end{enumerate}
      
\end{lemma}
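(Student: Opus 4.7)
The plan is to prove each item directly from the definition of $(\varepsilon,p)$-regularity. Both facts are standard and follow from short density-comparison arguments, so I will sketch each in a few lines.

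For item (1), the density lower bound is immediate: since $|A'|\ge\alpha|A|\ge\varepsilon|A|$ and $|B'|\ge\alpha|B|\ge\varepsilon|B|$ (we may assume $\alpha\ge\varepsilon$, otherwise the statement is vacuous as the regular subpair condition cannot be invoked), applying the $(\varepsilon,p)$-regularity of $(A,B)$ to $(A',B')$ gives $|d_p(A',B')-d|\le\varepsilon$, and in particular $d_p(A',B')\ge d-\varepsilon$. For the regularity inheritance, take any $A''\subseteq A'$ and $B''\subseteq B'$ with $|A''|\ge(\varepsilon/\alpha)|A'|$ and $|B''|\ge(\varepsilon/\alpha)|B'|$; then $|A''|\ge\varepsilon|A|$ and $|B''|\ge\varepsilon|B|$, so again by $(\varepsilon,p)$-regularity of $(A,B)$ we have $|d_p(A'',B'')-d|\le\varepsilon$. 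Combining with $|d_p(A',B')-d|\le\varepsilon$ via the triangle inequality yields $|d_p(A'',B'')-d_p(A',B')|\le 2\varepsilon\le\varepsilon/\alpha$ provided $\alpha\le 1/2$ (the usual normalisation, and the only regime in which the statement is non-trivial). Hence $(A',B')$ is $(\varepsilon/\alpha,p)$-regular as claimed.

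For item (2), let $A^*=\{v\in A: d_G(v,B)<(d-\varepsilon)p|B|\}$ and suppose for contradiction that $|A^*|>\varepsilon|A|$. Then the pair $(A^*,B)$ satisfies $|A^*|\ge\varepsilon|A|$, so by the $(\varepsilon,p)$-regularity of $(A,B)$ we have
\[
d_p(A^*,B)=\frac{e(A^*,B)}{p|A^*||B|}\ge d-\varepsilon.
\]
On the other hand, summing the degree bound over $v\in A^*$ gives
\[
e(A^*,B)=\sum_{v\in A^*}d_G(v,B)<|A^*|\cdot(d-\varepsilon)p|B|,
\]
so $d_p(A^*,B)<d-\varepsilon$, a contradiction. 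Therefore $|A^*|\le\varepsilon|A|$, proving the claim.

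Neither step presents any genuine difficulty; both are textbook arguments. The only mildly subtle point is tracking the parameters in item (1), where the $\varepsilon/\alpha$ regularity constant is obtained from a triangle-inequality bound of $2\varepsilon$, which is absorbed into $\varepsilon/\alpha$ under the standing assumption $\alpha\le 1/2$. Since this lemma is invoked later only as a black box, we may simply cite the survey~\cite{gerke_steger_2005} without giving the proof in the final text.
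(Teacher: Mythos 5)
Your argument is correct and is the standard textbook proof; the paper itself gives no proof of this lemma, simply citing the survey \cite{gerke_steger_2005}, so there is nothing to compare against beyond noting that your route is the expected one. The one point worth being careful about is the constant in item (1): the triangle inequality genuinely gives only $|d_p(A'',B'')-d_p(A',B')|\le 2\varepsilon$, so the clean conclusion is $(\max\{\varepsilon/\alpha,\,2\varepsilon\},p)$-regularity, which coincides with the stated $\varepsilon/\alpha$ exactly when $\alpha\le 1/2$; you flag this correctly, and it is harmless here since every invocation in the paper has $\alpha\le 1/2$ (e.g.\ $\alpha=1/3$ when passing to the sets $V_{i,a}$ in Proposition~\ref{matching}, and $\alpha=1/2$ for the pair $(W_\ell,V_{\ell,2})$ in the cleaning stage, where the paper indeed records the conclusion as $(2\varepsilon,p)$-regularity).
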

\noindent A partition $V(G)=V_0\cup V_1\cup\dots \cup V_k$ is said to be $(\eps,p)$-regular if
 \begin{enumerate}
	\item $|V_0|\leq \varepsilon|V(G)|$,
	\item $|V_i|=|V_j|$ for all $i,j\in[k]$, and
	\item all but at most $\varepsilon k^2$ pairs $(V_i,V_j)$ are $(\varepsilon,p)$-regular.
\end{enumerate}
We may now state a sparse version of Szemerédi's regularity lemma, due to Kohayakawa and Rödl~\cite{reglemma, rodlregularity}  .

\begin{theorem}\label{reg}Given $\varepsilon>0$ and $k_0\in \mathbb{N}$, there are $\eta>0$ and $K_0\ge k_0$ such that the following holds. Let $G$ be an $\eta$-upper-uniform graph on $n\ge k_0$ vertices and let $p\in (0,1)$. Then $G$ admits an $(\eps,p)$-regular partition $V(G)=V_0\cup V_1\cup \dots \cup V_k$ with $k_0\le k\le K_0$.\end{theorem}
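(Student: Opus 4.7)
The plan is to follow Szemerédi's original density-increment strategy, adapted to the sparse setting by using the $p$-density $d_p$ in place of the ordinary density and exploiting $(\eta,p)$-upper-uniformity in a critical way. For a partition $\mathcal{P} = \{V_1,\ldots,V_k\}$ of $V(G)$, define its $p$-index
\[
\mathrm{ind}_p(\mathcal{P}) \;=\; \sum_{1\le i<j\le k} \frac{|V_i||V_j|}{n^2}\,d_p(V_i,V_j)^2.
\]
The upper-uniformity hypothesis is used precisely to show that this quantity is bounded above by some absolute constant $M = M(\eta)$: indeed, a standard averaging argument over pairs shows that if one had a partition with $\mathrm{ind}_p(\mathcal{P})$ arbitrarily large, some pair $(V_i,V_j)$ with $|V_i|,|V_j|\ge \eta n$ would have $d_p$-value exceeding $(1+\eta)$, contradicting the upper bound in the definition of upper-uniformity (one handles small classes by absorbing them into $V_0$). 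Without this step the entire argument collapses in the sparse regime, since $p$-densities are a priori unbounded.

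The heart of the proof is the \emph{refinement lemma}: if $\mathcal{P}$ is an equipartition with $|V_0|\le \varepsilon n$ which fails to be $(\varepsilon,p)$-regular, then there is a refinement $\mathcal{P}'$ with $|\mathcal{P}'|\le |\mathcal{P}|\cdot 2^{|\mathcal{P}|}$ and $\mathrm{ind}_p(\mathcal{P}') \ge \mathrm{ind}_p(\mathcal{P}) + \varepsilon^5/2$. The proof is the familiar one: for each irregular pair $(V_i,V_j)$ choose witnesses $A_{ij}\subseteq V_i$, $B_{ij}\subseteq V_j$ of size at least $\varepsilon|V_i|$ whose $p$-density differs from $d_p(V_i,V_j)$ by at least $\varepsilon$; refine each $V_i$ using the coarsest common refinement induced by the $A_{ij}$ and $B_{ji}$; then by convexity (the Cauchy--Schwarz/defect inequality applied to $d_p$) each irregular pair contributes at least $\varepsilon^4$ to the increment, and summing over the $\varepsilon k^2$ irregular pairs yields the claimed increase of $\varepsilon^5/2$. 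The only sparse-specific subtlety is that the densities inside the sum can be large, but this is again absorbed by the upper-uniformity bound on $\mathrm{ind}_p$.

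To conclude, start from an arbitrary equipartition $\mathcal{P}_0$ of $V(G)$ into $k_0$ parts (with a small exceptional class $V_0$), and iterate the refinement lemma: each iteration increases the index by at least $\varepsilon^5/2$, and since the index is bounded by $M(\eta)$ (choosing $\eta$ sufficiently small in terms of $\varepsilon$), the process must terminate after at most $T = \lceil 2M(\eta)/\varepsilon^5\rceil$ steps. The final partition is $(\varepsilon,p)$-regular, and its size is bounded by the tower-type function $K_0 = \mathrm{tower}(k_0, T)$ obtained by applying the refinement bound $T$ times. Throughout, if during an iteration any class becomes too small, it is discarded into the exceptional set $V_0$; a routine bookkeeping argument ensures that $|V_0|\le \varepsilon n$ is maintained. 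The main technical obstacle—and the only place the sparse setting genuinely differs from the dense proof—is verifying that upper-uniformity suffices to bound the $p$-index; once this is in hand, the rest of the argument is a direct transcription of Szemerédi's original proof with every density replaced by its $p$-analogue.
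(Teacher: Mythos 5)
The paper does not actually prove Theorem~\ref{reg}; it is quoted from Kohayakawa and R\"odl, and your argument is precisely the standard energy-increment proof given in those references, with the ordinary density replaced by the $p$-density and upper-uniformity used to bound the index. It is correct, with the one standard caveat that $\eta$ must ultimately be chosen small relative to $1/K_0$ (not merely ``in terms of $\varepsilon$'') so that every class arising during the iteration has size at least $\eta n$ and upper-uniformity therefore bounds all the relevant $p$-densities by $1+\eta$ --- which is permissible, and non-circular, because the number of iterations depends only on $\varepsilon$ and the theorem quantifies $\eta$ after $\varepsilon$ and $k_0$.
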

Let $G$ be a graph that admits an $(\eps,p)$-regular partition $V(G)=V_0\cup V_1\cup\dots \cup V_k$, and let $d\in(0,1)$. The $(\eps,p,d)$-reduced graph $R$, with respect to this $(\eps,p)$-regular partition of $G$, is the graph with vertex set $V(R)=\{V_i:i\in[k]\}$, called clusters, such that $V_iV_j$ is an edge if and only if $(V_i,V_j)$ is an $(\eps,p)$-regular pair with $d_p(V_i,V_j)\ge d$. Next proposition establishes that the edge density of $R$ is roughly the same as in $G$. Since its proof is fairly standard in the applications of the Regularity Lemma, we omit it.

\begin{prop}\label{reg:edges}Let $\eps,\eta,p,d\in(0,1)$ and let $k\in\mathbb N$ such that $k\ge 1/\eps$. Let $G$ be an $(\eta,p)$-upper uniform graph on $n$ vertices that admits an $(\eps,p)$-regular partition $V(G)=V_0\cup V_1\cup\dots\cup V_k$, and let $R$ be the $(\eps,p,d)$-reduced graph of $G$ with respect to this partition. Then
	\[e(R)\ge \frac{e(G)}{(1+\eta)p}\left(\frac{k}{n}\right)^2-(6\eps+d)k^2.\]
	\end{prop}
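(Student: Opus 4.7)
The plan is the standard ``edge-counting'' argument showing that the reduced graph inherits essentially all of the edges of $G$. I would upper-bound $e(G)$ by splitting its edges into five classes according to where their endpoints lie: (i) edges incident to the exceptional set $V_0$; (ii) edges inside a single cluster $V_i$; (iii) edges across a pair $(V_i,V_j)$ that is not $(\eps,p)$-regular; (iv) edges across a regular pair with $p$-density less than $d$; and (v) edges across a regular pair with $p$-density at least $d$, i.e.\ the pairs corresponding to edges of $R$.

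Using the $(\eta,p)$-upper-uniformity of $G$ and the fact that $|V_i|\le n/k$, each of the first four classes is straightforward to control. Since $|V_0|\le \eps n$, class (i) contributes at most $(1+\eta)p\eps n\cdot n$ edges. Class (ii) contributes at most $k\cdot (1+\eta)p\binom{n/k}{2}\le (1+\eta)pn^2/(2k)$ edges. There are at most $\eps k^2$ irregular pairs, so class (iii) contributes at most $\eps k^2\cdot(1+\eta)p(n/k)^2=\eps(1+\eta)pn^2$ edges; and since each low-density pair contributes at most $dp(n/k)^2$ edges, class (iv) totals at most $\binom{k}{2}\cdot dp(n/k)^2\le dpn^2/2$. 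Finally, each edge of $R$ accounts for at most $(1+\eta)p(n/k)^2$ edges of class (v).

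Summing and solving for $e(R)$ yields
\[
e(R)\ge \frac{e(G)}{(1+\eta)p}\Bigl(\frac{k}{n}\Bigr)^2 -\eps k^2-\frac{k}{2}-\eps k^2 -\frac{d k^2}{2(1+\eta)}.
\]
Using the hypothesis $k\ge 1/\eps$ to bound $k/2\le \eps k^2/2$, and dropping the harmless $(1+\eta)^{-1}$ factor in front of $d$, the total error is at most $(6\eps+d)k^2$, which is the claimed bound. There is no genuine obstacle: the only minor care needed is to count edges incident to $V_0$ (both inside $V_0$ and between $V_0$ and the clusters) just once, and to allow the $(1+\eta)$ factor to be absorbed loosely into the error term.
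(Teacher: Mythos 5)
Your proof is correct and is precisely the standard edge-counting argument the paper has in mind (the paper omits its own proof as routine); the decomposition into the five edge classes, the use of $k\ge 1/\eps$ to absorb the $k/2$ term from intra-cluster edges, and the final arithmetic all check out against the claimed error term $(6\eps+d)k^2$. The only caveat is that applying the $(1+\eta)p$ upper-uniformity bound to edges inside a single cluster $V_i$ or incident to $V_0$ implicitly assumes those sets have at least $\eta n$ vertices, a standard tacit convention in sparse regularity arguments that the paper also adopts.
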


\subsection{Facts about the random graph}

We state three lemmas concerning properties of $G(N,p)$ and we omit their proofs. The first two follow by a simple application of Chernoff's bound and the third by Janson's inequality.

\begin{lemma}\label{etap}
For every $\eta>0$ there exists $C>0$ such that if $p\geq C/N$ then, with high probability, $G(N,p)$ is $(\eta,p)$-uniform.
\end{lemma}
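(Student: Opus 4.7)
The plan is to establish both estimates in the definition of $(\eta,p)$-uniformity by a direct Chernoff bound applied to each fixed pair of sets, followed by a union bound over the (at most exponentially many) choices of sets. Throughout, fix $\eta\in(0,1)$ and let $C=C(\eta)$ be chosen sufficiently large at the end.

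First I would handle the bipartite estimate \eqref{uniform:1}. Fix disjoint sets $A,B\subseteq[N]$ with $|A|,|B|\ge\eta N$, and let $X_{A,B}=e_G(A,B)$. Under $G=G(N,p)$ the variable $X_{A,B}$ is a sum of $|A||B|$ independent Bernoulli$(p)$ random variables with mean $\mu:=p|A||B|\ge p\eta^2 N^2\ge C\eta^2 N$. The standard multiplicative Chernoff inequality gives
\[
\mathbb P\bigl(|X_{A,B}-\mu|>\eta\mu\bigr)\le 2\exp\!\left(-\tfrac{\eta^2\mu}{3}\right)\le 2\exp\!\left(-\tfrac{C\eta^4 N}{3}\right).
\]
The number of ordered pairs of disjoint subsets of $[N]$ is at most $3^N$ (each vertex is in $A$, in $B$, or in neither). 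A union bound shows that the probability that some such pair violates~\eqref{uniform:1} is at most $2\cdot 3^N\exp(-C\eta^4 N/3)$, which tends to $0$ as $N\to\infty$ provided $C$ is chosen large enough (e.g.\ $C\ge 12/\eta^4$).

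Next I would treat the single-set estimate \eqref{uniform:2} in exactly the same way. For fixed $A\subseteq[N]$ with $|A|\ge\eta N$, the variable $e_G(A)$ is a sum of $\binom{|A|}{2}$ independent Bernoulli$(p)$ random variables with mean $p\binom{|A|}{2}\ge \tfrac{p\eta^2 N^2}{3}$ for $N$ large. The same Chernoff bound yields a deviation probability at most $2\exp(-C\eta^4 N/9)$, and a union bound over the at most $2^N$ candidate sets $A$ again gives a total probability tending to $0$ for $C$ sufficiently large.

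Combining the two union bounds shows that, with high probability, every pair of disjoint sets $A,B$ of size at least $\eta N$ satisfies \eqref{uniform:1} and every set $A$ of size at least $\eta N$ satisfies \eqref{uniform:2}, i.e., $G(N,p)$ is $(\eta,p)$-uniform. No step here is delicate; the only thing to be careful about is choosing $C$ after fixing $\eta$ so that the Chernoff exponent $\Theta(C\eta^4 N)$ dominates the $O(N\log 3)$ entropy term arising from the union bound over set choices.
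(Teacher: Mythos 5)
Your proof is correct and is exactly the argument the paper has in mind: the authors omit the proof of this lemma, remarking only that it ``follows by a simple application of Chernoff's bound,'' which is precisely your Chernoff-plus-union-bound over the at most $3^N$ choices of disjoint sets. The constants check out (the exponent $C\eta^4N/3$ beats the $N\log 3$ entropy term once $C\ge 12/\eta^4$), so there is nothing to add.
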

In particular, since any spanning subgraph of an $(\eta,p)$-uniform graph is $(\eta,p)$-upper-uniform, then, with high probability, every spanning subgraph of $G(N,p)$ is $(\eta,p)$-upper-uniform, as long as $p\ge C/N$.

\begin{lemma}  For every $\gamma>0$,
   $G=G(N,p)$ a.a.s satisfies the following properties.
\label{p2}
\begin{enumerate}[$(i)$]
 \item\label{lem:Gnp1} For every set $U\subseteq V$ with $|U|\geq \gamma N$,
    there are at most $64/\gamma p$ vertices in $V$ with less
    than $\gamma pN/8$ neighbours in $U$.
    \item\label{lem:Gnp2} For every $c>0$, there exists $0<c'<1$ such that $G$ is a weak $(c/p,c'N)$-expander. Moreover, $c'\rightarrow 0$
    as $c\rightarrow \infty$.
\end{enumerate}
\end{lemma}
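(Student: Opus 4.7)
The plan is to prove both parts by fixing a candidate bad configuration, bounding the probability of that configuration by a Chernoff/Janson-type estimate, and then taking a union bound over all such configurations.

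For part~\eqref{lem:Gnp1}, the natural witness of failure is a pair of disjoint sets $U,S \subseteq V$ with $|U|=\lceil \gamma N \rceil$ and $|S|=s:=\lceil 64/(\gamma p)\rceil$ such that every vertex in $S$ has fewer than $\gamma pN/8$ neighbours in $U$. I would first reduce to the case $|U|=\lceil \gamma N\rceil$ by monotonicity (adding vertices to $U$ can only increase degrees). The assumed failure implies $e(U,S) < s\gamma pN/8 = p|U||S|/8$, while $\E[e(U,S)] = p|U||S|$. A standard Chernoff bound for a $\mathrm{Bin}(|U||S|,p)$ variable then gives
\[
\Pr\bigl[e(U,S) < p|U||S|/8\bigr] \;\le\; \exp(-c_0 p|U||S|) \;=\; \exp(-8 c_0 N)
\]
for an absolute constant $c_0>0$. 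The number of pairs $(U,S)$ is at most $\binom{N}{\gamma N}\binom{N}{s}\le 2^N (eN/s)^s$, and $(eN/s)^s = \exp(O(N))$ with a constant that depends only on $\gamma$ (and on $pN$, which is bounded below by the relevant applications); choosing the constant~$64$ in the statement was tailored so that this union bound is still dominated by $\exp(-8c_0 N)$, giving a failure probability of $o(1)$.

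For part~\eqref{lem:Gnp2}, I would fix $c>0$ and disjoint sets $X,Y$ with $|X|=\lceil c/p \rceil$ and $|Y|=\lceil c' N \rceil$, where $c'$ is to be chosen. Since $e(X,Y)\sim\mathrm{Bin}(|X||Y|,p)$,
\[
\Pr[e(X,Y)=0]=(1-p)^{|X||Y|}\le \exp(-p|X||Y|)\le \exp(-cc'N).
\]
The number of such pairs is at most
\[
\binom{N}{c/p}\binom{N}{c'N}\le \exp\!\left(\tfrac{c}{p}\log(eNp/c)+c'N\log(e/c')\right).
\]
Under the hypotheses of our applications ($p$ bounded below by a large multiple of $1/N$), the first term in the exponent is $O(N)$ with a constant independent of $c'$, while the second term is $c'N\log(e/c')\to 0\cdot N$ as $c'\to 0$. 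Hence I can choose $c'=c'(c)$ small enough so that $cc'N$ beats the union bound by a linear factor in $N$, giving failure probability $o(1)$. Moreover, the required smallness of $c'$ scales inversely with $c$, so $c'\to 0$ as $c\to\infty$, as claimed.

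The main ``obstacle'' is bookkeeping rather than substance: the constants in both parts are tight enough that one must be careful to verify that the exponential decay $\exp(-\Theta(N))$ coming from Chernoff/Janson strictly dominates the entropy terms $\binom{N}{\gamma N}$ and $\binom{N}{c/p}$ arising from the union bound. In part~(i) this is why the constant $64$ appears, and in part~(ii) this forces the precise dependence $c'=c'(c)$ with $c'\to 0$ as $c\to\infty$; no more delicate argument is needed.
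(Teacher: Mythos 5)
The paper gives no proof of this lemma at all: it states that both parts follow from a simple application of Chernoff's bound plus a union bound, which is exactly your strategy, so your approach coincides with the intended one. Part~(i) is fine in outline (note that $p|U||S|=64N$, not $8N$ --- which only helps you --- and you should add a word about bad vertices lying inside $U$, since the lemma quantifies over all of $V$ and your witness pair $(U,S)$ is assumed disjoint).

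In part~(ii), however, one step is stated backwards. You say you will ``choose $c'$ small enough so that $cc'N$ beats the union bound.'' For the entropy term $c'N\log(e/c')$ this is the wrong direction: the inequality $cc'N> c'N\log(e/c')$ is equivalent to $c>1+\log(1/c')$, i.e.\ to $c'>e^{1-c}$, so shrinking $c'$ makes this constraint \emph{harder}, and for fixed $c$ you cannot take $c'$ arbitrarily small (nor is this an artifact of the union bound: already a fixed $X$ of size $c/p$ typically has about $Ne^{-c}$ common non-neighbours). The correct logic is that $c'$ is bounded \emph{below} by roughly $e^{-c}$, and it is this lower bound tending to $0$ as $c\to\infty$ that yields the ``moreover'' clause. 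Separately, the term $\binom{N}{c/p}$ contributes $\tfrac{c}{p}\log(eNp/c)=N\cdot\tfrac{\log(ex)}{x}$ with $x=pN/c$, which is dominated by $cc'N$ only when $pN$ is sufficiently large; your remark that this holds in the regime of application is correct, but it should be recorded as a hypothesis rather than an aside, since the statement is false for $p$ too close to $c/N$. With the constraint directions fixed, the computation goes through.
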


\begin{lemma}\label{p3} For every $\gamma>0$ there exists $C'>0$ such that if $p\geq C'N^{-2/(r+2)}$, then $G=G(N,p)$ with high probability has the following property. For every $v\in V(G)$ and any $r$ disjoint sets $W_1,\dots,W_r\subseteq N(v)$, with $|W_i|\geq \gamma pN$ for each $i\in[r]$, there exists a copy of $K_{r+1}$ containing $v$ and one vertex in each $W_i$, for $i\in[r]$.
\end{lemma}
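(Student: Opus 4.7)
The strategy is a standard union bound combined with Janson's inequality. Since any $r$-tuple $(w_1,\dots,w_r)\in W_1\times\cdots\times W_r$ spanning a copy of $K_r$ extends to a copy of $K_{r+1}$ by adding $v$ (because $W_i\subseteq N(v)$), it suffices to show that, with high probability, for every vertex $v$ and any disjoint $W_1,\dots,W_r\subseteq N(v)$ with $|W_i|\ge\gamma pN$, there is a canonical copy of $K_r$ in $(W_1,\dots,W_r)$, i.e.\ a $K_r$ with exactly one vertex in each $W_i$. By passing to subsets, I may assume $|W_i|=m:=\lceil\gamma pN\rceil$.

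Now fix $v$ and disjoint $W_1,\dots,W_r\subseteq V(G)\setminus\{v\}$ with $|W_i|=m$, and let $E_{v,W}$ denote the event that $W_i\subseteq N(v)$ for every $i$ while $(W_1,\dots,W_r)$ contains no canonical $K_r$. The edges joining $v$ to $\bigcup_i W_i$ are independent of the edges inside $\bigcup_i W_i$, so
\[
\Pr[E_{v,W}]\le p^{rm}\cdot\Pr[\text{no canonical }K_r].
\]
The expected number of canonical $K_r$'s is $\mu=m^rp^{\binom{r}{2}}=\gamma^r p^{\binom{r+1}{2}}N^r$, and a routine check that the dependency parameter $\Delta$ from Janson's inequality satisfies $\mu^2/\Delta=\Omega(\mu)$ under $p\ge C' N^{-2/(r+2)}$ (going overlap size by overlap size, $s\in\{2,\dots,r\}$) yields $\Pr[\text{no canonical }K_r]\le\exp(-c\mu)$ for some $c=c(r)>0$.

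Taking a union bound over $v$ and $(W_1,\dots,W_r)$, and using $\binom{N}{m}^r p^{rm}\le (eN/m)^{rm}p^{rm}=(e/\gamma)^{rm}=\exp(O(pN))$, I obtain
\[
\Pr\bigl[\exists(v,W):E_{v,W}\bigr]\le N\cdot\exp\bigl(O(pN)-c\gamma^r p^{\binom{r+1}{2}}N^r\bigr).
\]
A direct computation gives $p^{\binom{r+1}{2}}N^r=(C')^{\binom{r+1}{2}-1}\cdot pN$ for $p=C'N^{-2/(r+2)}$, so the exponent becomes $\log N+O(pN)-c\gamma^r(C')^{\binom{r+1}{2}-1}pN$. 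Choosing $C'$ sufficiently large (as a function of $\gamma$ and $r$) makes the negative term dominate the $O(pN)$ term, and since $pN=\Theta(N^{r/(r+2)})=\omega(\log N)$ for $r\ge 2$, the $\log N$ contribution is negligible, so the probability tends to $0$. The only delicate point is the Janson bound $\exp(-\Omega(\mu))$, which is standard but requires comparing $\Delta$ against $\mu$ uniformly over the overlap sizes; everything else is a clean union-bound calculation.
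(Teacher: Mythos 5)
The paper omits the proof of this lemma entirely (it just says it ``follows by Janson's inequality''), and your architecture --- union bound over $v$ and over all disjoint $r$-tuples of $m$-sets with $m=\lceil\gamma pN\rceil$, folding the factor $p^{rm}=\Pr[W_i\subseteq N(v)\ \forall i]$ into the union bound so that $\binom{N}{m}^rp^{rm}\le(e/\gamma)^{rm}=\exp(O(pN))$ --- is exactly the intended argument and is the right way to handle the quantification over subsets of $N(v)$.

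There is, however, one incorrect intermediate claim: that $\mu^2/\Delta=\Omega(\mu)$, equivalently $\Delta=O(\mu)$, for all $p\ge C'N^{-2/(r+2)}$. Going overlap size by overlap size, $\Delta_s/\mu\asymp m^{r-s}p^{\binom{r}{2}-\binom{s}{2}}=\gamma^{r-s}N^{r-s}p^{\binom{r+1}{2}-\binom{s+1}{2}}$, and since $\binom{r+1}{2}-\binom{s+1}{2}=\tfrac{(r-s)(r+s+1)}{2}$, the condition $\Delta_s=O(\mu)$ amounts to $p=O\bigl(N^{-2/(r+s+1)}\bigr)$ --- an \emph{upper} bound on $p$ that holds near the threshold but fails badly for larger $p$ (e.g.\ constant $p$, which the lemma must cover: there $\mu\asymp N^r$ while $\mu^2/\overline{\Delta}\asymp N^2$). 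So the bound $\exp(-c\mu)$ is not what Janson gives throughout the range. The repair is to work directly with the quantity Janson actually controls, $\mu^2/\overline{\Delta}\asymp\min_{2\le s\le r}m^sp^{\binom{s}{2}}=\min_{2\le s\le r}\gamma^sN^sp^{\binom{s+1}{2}}$, and compare it to $pN$ rather than to $\mu$: since $\binom{s+1}{2}-1=\tfrac{(s-1)(s+2)}{2}$, one has $N^{s-1}p^{\binom{s+1}{2}-1}\ge(C')^{\binom{s+1}{2}-1}$ whenever $p\ge C'N^{-2/(s+2)}$, and all of these are implied by $p\ge C'N^{-2/(r+2)}$ (the $s=r$ term is the bottleneck and is where the exponent $-2/(r+2)$ comes from). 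Each term is increasing in $p$, so the Janson exponent is at least $\gamma^r(C')^2\cdot pN$ throughout the range, which dominates the $O(pN)$ from the union bound once $C'$ is large; the rest of your calculation then goes through unchanged. As written, the ``routine check'' you defer to would fail if carried out literally above the threshold, so this substitution is needed.
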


\section{Global Resilience of Large Trees}\label{sec:resilience}
This section is devoted to the global resilience of trees of linear size and bounded maximum degree in $G(N,p)$. We will prove the following result, which is a strengthening of Theorem 1.3.

\begin{theorem}\label{Resilience}Let $\delta,\varrho\in(0,1)$ and $D\ge 2$. There are positive constants $n_0,\eta_0$ and $C $ such that for all $0<\eta\le\eta_0$ and $n\ge n_0$ the following holds. Let $G$ be a $(\eta,p)$-uniform graph on $n$ vertices and let $p\in[0,1]$ with $pn\geq C$. Then every subgraph $G'\subseteq G$
with $e(G')\ge \left(\varrho+\delta \right)e(G)$ is $\mathcal{T}(\varrho n,D)$-universal.
\end{theorem}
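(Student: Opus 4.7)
The plan is to follow the outline in Section~\ref{sec:outline}, combining the sparse regularity lemma with a careful subtree-by-subtree embedding. First, I would apply Theorem~\ref{reg} to $G'$ with small parameters $\eps, d \ll \delta$ to obtain an $(\eps,p)$-regular partition of $V(G')$ with $k$ parts. Since $G$ is $(\eta,p)$-uniform, and thus $(\eta,p)$-upper-uniform (and so is $G'$), Proposition~\ref{reg:edges} combined with the hypothesis $e(G') \geq (\varrho+\delta)e(G)$ yields $e(R) \geq (\varrho+\delta/2)\binom{k}{2}$ in the $(\eps,p,d)$-reduced graph $R$. By iteratively removing clusters of degree less than $(\varrho+\delta/2)k/2$ one obtains an induced subgraph $R' \subseteq R$ with average degree at least $(\varrho+\delta/2)k$ and minimum degree at least $(\varrho+\delta/2)k/2$, since deleting too many clusters would contradict the lower bound on $e(R)$.

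Next, I would extract the structural decomposition described in the outline. Pick a cluster $X\in V(R')$ with $|N_{R'}(X)|\ge(\varrho+\delta/2)k$, let $\mathcal{M}$ be a maximum matching in $R'[N_{R'}(X)]$, and set $\mathcal{Y}=N_{R'}(X)\setminus V(\mathcal{M})$, which is independent in $R'$. Set $\mathcal{Z}=N_{R'}(\mathcal{Y})\setminus(X\cup N_{R'}(X))$ and $\mathcal{H}=R'[\mathcal{Y},\mathcal{Z}]$; the minimum degree of $R'$ guarantees that every $Y\in\mathcal{Y}$ has $\Omega(k)$ neighbours in $\mathcal{Z}$. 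Given a target tree $T\in\mathcal{T}(\varrho n, D)$, I would decompose $T$ into $O(n/B)$ small rooted subtrees $T_1,\dots,T_\ell$ of bounded size $B=B(D,\delta)$ whose roots lie at pairwise even distance in $T$ and whose root-contraction is a BFS-traversable tree. The $T_i$ are embedded sequentially in this order: each root is placed in $X$ and the remainder of $T_i$ is embedded into a regular pair coming from either an $\mathcal{M}$-edge or an $\mathcal{H}$-edge, with Lemma~\ref{lem:regularpairs} and the smallness of $B$ relative to the cluster sizes allowing us to always extend the root into the pair via a common neighbour.

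The main obstacle is balancing the usage of the two cluster-classes within each regular pair. For subtrees assigned to an $\mathcal{M}$-edge this is straightforward, because $X$ is adjacent to both endpoints of the edge, so I would put the heavier bipartition class of $T_i$ into whichever cluster currently has more room; this allows each $\mathcal{M}$-edge to be filled to within $o(n/k)$ of capacity. When the $\mathcal{M}$-edges are nearly exhausted and subtrees must be routed through $\mathcal{H}$, the adjacency constraints force the vertices of $T_i$ at even distance from the root into $\mathcal{Y}$, potentially unbalancing $\mathcal{Y}$ versus $\mathcal{Z}$. I would handle this by routing $\mathcal{H}$-subtrees so that the cumulative $\mathcal{Y}$-load weakly dominates the $\mathcal{Z}$-load throughout the process; then, if some $Y\in\mathcal{Y}$ ran out of $\mathcal{Z}$-neighbours with spare room, a double-counting argument would force at least $2|N_{\mathcal{H}}(Y)|$ clusters of $\mathcal{H}$ to already be saturated, contradicting either the minimum degree bound on $R'$ or the bound $|V(T)|\le\varrho n+1$. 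Iterating this scheme embeds all of $T$ and yields the desired universality.
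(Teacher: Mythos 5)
Your proposal reproduces the paper's outline accurately at the top level, but there is a genuine gap in the structural decomposition of $N_{R'}(X)$, and it is exactly the point on which the final double counting hinges. You take $\mathcal{M}$ to be a maximum matching in $R'[N_{R'}(X)]$ and let $\mathcal{Y}$ be the uncovered (hence independent) vertices, and you then assert that the minimum degree of $R'$ gives every $Y\in\mathcal{Y}$ $\Omega(k)$ neighbours in $\mathcal{Z}$. This is false in general: $\delta(R')$ is only about $(\varrho+\delta/2)k/2$, while $|V(\mathcal{M})|$ can be as large as $|N_{R'}(X)|$, so all of $Y$'s neighbours may lie inside $V(\mathcal{M})$ and $d_{\mathcal H}(Y)$ may be $0$. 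More quantitatively, your saturation argument needs roughly $2d_{\mathcal H}(Y)\,m+|V(\mathcal{M})|\,m>\varrho k m$, i.e.\ $d_{\mathcal H}(Y)\ge(\varrho k-|V(\mathcal{M})|)/2$ up to $\delta$-slack; with your decomposition you only get $d_{\mathcal H}(Y)\ge \delta(R')-|V(\mathcal{M})|\approx \varrho k/2-|V(\mathcal{M})|$, which falls short by $|V(\mathcal{M})|/2$. In the regime $|V(\mathcal{M})|\approx\varrho k/2$ (matching too small to host $T$ alone, but large enough to absorb $Y$'s entire degree surplus) the argument does not close.

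The paper repairs this with Lemma~\ref{lem:mat_trian}: $N(X)$ is partitioned into an independent set $I$, a matching with sides $M_1,M_2$, and a family of triangles, with the crucial extra property $N(I)\subseteq M_1$, so each $Y\in\mathcal{Y}$ wastes at most $|V(\mathcal{M})|/2$ of its degree inside $N(X)$; a $2$-blowup of the regular partition then converts the edges and triangles into a perfect matching while preserving this halving (this is Proposition~\ref{matching}\eqref{R:iii}). You would need this (or an equivalent device) for the counting to work. Separately, your embedding step is too optimistic as stated: regular pairs are not expanders (they may contain isolated vertices), so one cannot simply "extend the root via a common neighbour"; the paper first cleans each assigned pair to a bipartite expander (Lemma~\ref{pikachu}), embeds each $T_i$ with Lemma~\ref{magmar}, and maintains sets $W_i\subseteq X$ of vertices with many neighbours in the relevant cleaned clusters so that roots can always be placed. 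These are fixable technicalities, but the matching decomposition is the substantive missing idea.
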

It turns out that Theorem~\ref{resilience} easily follows from Theorem~\ref{Resilience}. Indeed, recall from Lemma~\ref{etap} that, with high probability, $G=G(N,p)$ is $(\eta_0,p)$-uniform for $p\ge C/N$ and therefore, by Theorem~\ref{Resilience}, any subgraph $G'\subseteq G$ with $e(G')\ge (\varrho +\delta)e(G(N,p))$ is $\mathcal T(\varrho N,D)$-universal. 

\subsection{Cutting up a tree} Now we show how to cut a given tree $T$ into a constant number of small rooted subtrees, such that the root of each of these subtrees is at even distance from the root of $T$. This partition will be a straightforward modification of the following result proved by Balogh, Csaba and Samotij~\cite[Lemma 15]{Samotij}.
\begin{lemma}
	\label{cutS}
	Let $D\ge 2$ and let $(T,r)$ be a rooted tree with maximum degree at most $D$. If $\beta\ge 1/|V(T)|$, then there exists a family of $t\le 4/\beta$ disjoint rooted subtrees $(T_i,r_i)_{i\in [t]}$ such that $V(T)=V(T_1)\cup \dots\cup V(T_t)$ and for each $i\in [t]$ we have 
	\begin{enumerate}
	    \item\label{lem:cutS1}$|V(T_i)|\le D^2\beta |V(T)|$,
	    \item\label{lem:cutS2} $T_i$ is connected (by an edge) to at most $D^3$ other subtrees, and 
	    \item \label{lem:cutS3}$T_i$ is rooted at $r_i$ and all the children of $r_i$ belong to $T_i$.
	    \end{enumerate}
\end{lemma}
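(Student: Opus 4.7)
The plan is to adapt the bottom-up greedy cut of Balogh--Csaba--Samotij~\cite{Samotij}, modifying it so that each root $r_i$ lies at \emph{even distance} from $r$ in $T$; this single change is what enforces property~\ref{lem:cutS3}. Root $T$ at $r$, set $s = \beta|V(T)|$ (which is at least $1$ by assumption), and initialize the set of unassigned vertices $U := V(T)$. For each $v$, write $S_v$ for the subtree of $T$ rooted at $v$. The algorithm iterates the following cut: find an unassigned $v$ at even distance from $r$ such that $|S_v \cap U| \ge s$ and every even-distance strict descendant $w$ of $v$ still in $U$ satisfies $|S_w \cap U| < s$. Declare $T_i := S_v \cap U$ and $r_i := v$, then remove $T_i$ from $U$. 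When no such $v$ exists, gather the leftover vertices into a final subtree $T_t$ rooted at $r$.

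The minimality condition on $v$ gives property~\ref{lem:cutS1}: $v$ has at most $D$ children in $T$ (all at odd distance), each with at most $D-1$ children (the grandchildren of $v$, at even distance). A grandchild $w$ still in $U$ contributes at most $s-1$ vertices to $|T_i|$, while one removed in a previous cut contributes nothing. Summing,
\[|T_i| \le 1 + D\bigl(1 + (D-1)(s-1)\bigr) \le D^2 s = D^2\beta|V(T)|.\]
The bound $t \le 4/\beta$ then follows because every $T_i$ with $i<t$ contains at least $s$ vertices, so $t \le 1 + |V(T)|/s = 1 + 1/\beta \le 4/\beta$.

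To verify~\ref{lem:cutS3}, note that the children of $r_i$ in $T$ are all at odd distance from $r$, and so can never themselves be cut roots. Moreover, no such child $c$ could have been absorbed as an internal vertex of an earlier $T_j$: if $c \in T_j$ then $r_j$ must be an even-distance ancestor of $c$ in $T$, forcing either $r_j = r_i$ (impossible, since $r_i$ has not yet been cut) or $r_j$ a strict even-distance ancestor of $r_i$ (impossible, since in that case $r_i$ would lie in $S_{r_j}\cap U$ at the step $r_j$ was cut, and hence be absorbed into $T_j$ rather than cut separately later). Therefore $c$ is still unassigned when $T_i$ is formed, and $c \in T_i$.

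The delicate step, and the main obstacle, is the connectivity bound~\ref{lem:cutS2}. The boundary of $T_i$ in $T$ consists of at most one ``up'' edge from $r_i$ to $\mathrm{parent}(r_i)$ together with ``down'' edges from odd-distance vertices of $T_i$ to cut roots $r_j$ that are descendants of $r_i$ whose parent in $T$ still lies in $T_i$. Because the algorithm always cuts the deepest eligible even-distance vertex first, any such descendant $r_j$ must sit immediately below $T_i$, in the sense that no intervening cut separates $r_j$ from $r_i$ along the $T$-path from $r_i$ to $r_j$. A case analysis on the parities of depths, analogous to the one in~\cite{Samotij}, then bounds the number of such $r_j$ by $D^3$, yielding~\ref{lem:cutS2}. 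This structural count is the technical heart of the modification.
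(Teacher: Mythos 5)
First, a framing point: the paper does not actually prove Lemma~\ref{cutS} --- it is quoted from \cite[Lemma~15]{Samotij} --- and the even-distance condition you impose on the roots is not part of this lemma. That condition is the content of Proposition~\ref{cutP}, which the paper derives from Lemma~\ref{cutS} by a separate post-processing step. Property~\ref{lem:cutS3} only asks that the children of $r_i$ lie in $T_i$, and your upward-closedness argument gives this without any parity constraint; the parity requirement therefore only adds complication here. Your treatment of the size bound, the count $t\le 4/\beta$, and property~\ref{lem:cutS3} is otherwise fine.

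The genuine gap is property~\ref{lem:cutS2}, which you defer to ``a case analysis analogous to the one in \cite{Samotij}''. This is precisely the step that fails: the greedy cut you describe does \emph{not} satisfy the $D^3$ bound. What your argument actually shows is that every subtree $T_j$ adjacent to $T_i$ from below has its root $r_j$ equal to a child of a vertex of $T_i$; but $T_i$ has up to $(D-1)|T_i|$ such children, and nothing in the minimality of $r_i$ prevents most of them from being roots of earlier cuts. Concretely, take $D=3$ and $s=\beta|V(T)|=100$, and let $T$ be the path $p_0p_1\cdots p_{200}$ rooted at $p_0$, with a pendant path $B_k$ of exactly $s$ vertices attached at each odd vertex $p_{2k+1}$ for $0\le k\le 99$ (so each root $b_k$ of $B_k$ sits at even depth). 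No vertex strictly inside a $B_k$ is ever eligible, each $b_k$ is eligible, and every spine vertex $p_{2j}$ with a surviving pendant below it is ineligible because $b_k$ is an even-depth descendant with $|S_{b_k}\cap U|=s$. Hence your algorithm must first cut off all $100$ pendants; at that point the unique eligible vertex is $p_{100}$, and the resulting piece $\{p_{100},\dots,p_{200}\}$ is adjacent to the $50$ pendants rooted at $b_{50},\dots,b_{99}$, far exceeding $D^3=27$. To obtain the $D^3$ bound one must build each piece so that all edges leaving it downward emanate from vertices within distance two of its root, which caps the number of neighbours at $1+D+D(D-1)+D(D-1)^2\le D^3$; a ``deepest eligible vertex'' greedy does not arrange this, and this is where the construction in \cite{Samotij} necessarily differs from yours.
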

Given a tree $T$, let $(T_i,r_i)_{i\in[t]}$ be the family given by Lemma~\ref{cutS}. We may define an auxiliary graph $T_\Pi$, called \textit{cluster tree}, with vertex set $V(T_\Pi)=[t]$ and edge set $E(T_\Pi)=\{ij\mid \text{ $T_i$ and $T_j$ are adjacent in $T$}\}$.
\begin{figure}[h!]
    \centering
    \includegraphics[scale=.6]{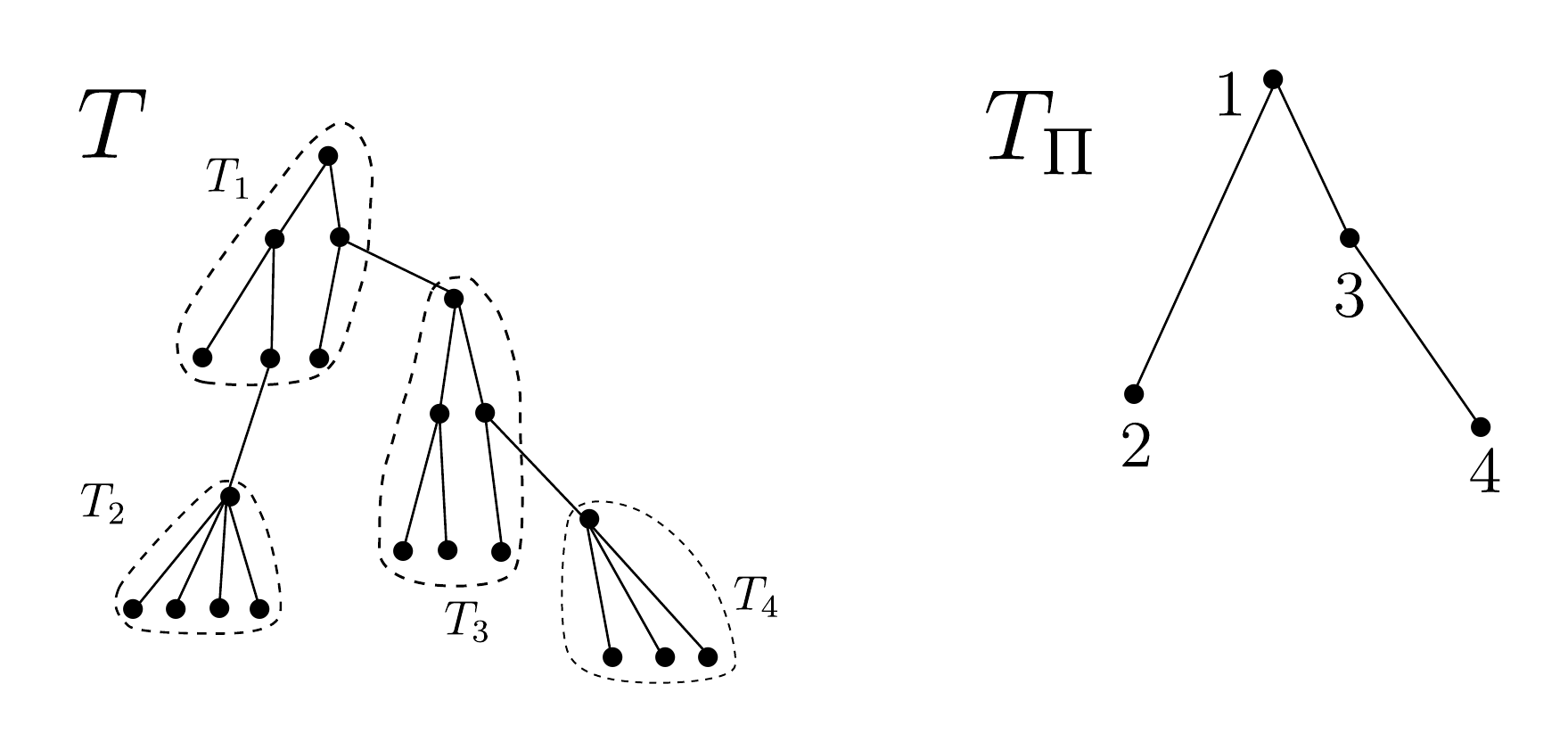}
    \caption{Cluster tree}
\end{figure}

Note that property~\textit{(\ref{lem:cutS1})} of Lemma~\ref{cutS} implies that $|V(T_\Pi)|\leq 4/\beta$, and property~\textit{(\ref{lem:cutS2})} implies that $\Delta(T_\Pi)\leq D^3$, which plays a crucial role in the embedding strategy. We only need to refine the partition given by Lemma~\ref{cutS} in order to impose that the root of each subtree is at even distance from the root of $T$, which is a stronger property than \textit{(\ref{lem:cutS3})}.
\begin{prop}
	\label{cutP}
	Let $D\ge 2$ and let $(T,r)$ be a rooted tree with maximum degree at most $D$. If $\beta\ge 1/|V(T)|$, then there exists a family of $t\le 4D/\beta$ disjoint rooted subtrees $(T_i,r_i)_{i\in [t]}$ such that $V(T)=V(T_1)\cup \dots\cup V(T_t)$ and for each $i\in[t]$ we have
	\begin{enumerate}
	    \item $|V(T_i)|\le D^4\beta |V(T)|$,
	    \item $T_i$ is rooted at $r_i$ and the distance from $r_i$ to $r$ is even,
	    \item all the children of $r_i$ belong to $T_i$, and
	    \item the corresponding cluster tree has maximum degree at most $D^4$.
	\end{enumerate}
\end{prop}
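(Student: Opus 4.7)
The plan is to refine the partition produced by Lemma~\ref{cutS} so that every root ends up at even distance from $r$, without losing control over the sizes or cluster-tree degrees. I will apply Lemma~\ref{cutS} with the same parameter $\beta$ to obtain a family $(T_i,r_i)_{i\in[t]}$ with $t\le 4/\beta$, each $|V(T_i)|\le D^2\beta|V(T)|$, and cluster tree of maximum degree at most $D^3$. Call an index $i$ \emph{odd} if $r_i$ lies at odd distance from $r$, and \emph{even} otherwise.

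For each odd index $i$, I will perform the following local modification simultaneously: delete $r_i$ from $T_i$, which splits $T_i$ into at most $\deg_T(r_i)\le D$ sub-subtrees, each rooted at a child of $r_i$ (all such children belong to $T_i$ by property~(3) of Lemma~\ref{cutS}); then reattach the vertex $r_i$ to whichever member of the refined partition contains the parent of $r_i$ in $T$. This is well-defined: if $r_i$ is at odd distance, then its parent is at even distance, and any even-distance vertex that is a root of some old subtree is itself an even root (not an odd root), so no circular dependency arises.

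Next I verify the four required properties. Property~(2) is immediate, since each new root is either an old even root or a child of an old odd root, and in both cases sits at even distance from $r$. Property~(3) is the main subtlety: for a new root $c$ that was a child of some old odd $r_i$, any child $c'$ of $c$ in $T$ either lay in $T_i$ originally (hence in the sub-subtree rooted at $c$), or was the root $r_{j'}$ of a different old subtree; in the second case $r_{j'}$ is a child of the even-distance vertex $c$, so $r_{j'}$ is itself odd, and the simultaneous reattachment rule places $r_{j'}$ into the new subtree of $c$ --- exactly where it is needed. For an old even root $r_j$, the children of $r_j$ already lay in $T_j$ and, being non-root vertices of the old partition, are never moved.

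For property~(1), each sub-subtree $T_i^{(c)}$ has size at most $|V(T_i)|\le D^2\beta|V(T)|$, and each subtree absorbs at most $D^3$ extra vertices (one per old cluster-tree neighbour whose root is odd and whose reattachment falls into that subtree), contributing at most $D^3\le D^3\beta|V(T)|$ since $\beta\ge 1/|V(T)|$; the total is bounded by $D^4\beta|V(T)|$. The bound $t'\le 4D/\beta$ follows because each old subtree produces at most $D$ new pieces, and the degree bound $D^4$ on the new cluster tree follows because each of a subtree's at most $D^3$ old neighbours contributes at most $D$ new neighbours upon splitting. I expect property~(3) to be the conceptual obstacle: the naive strategy of splitting each odd subtree at its root might leave a new root with a missing child in a foreign subtree, and it is precisely the simultaneous reattachment of \emph{all} odd roots --- together with the parity observation that such missing children are themselves odd roots --- that closes the picture up without any recursive definition.
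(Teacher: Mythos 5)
Your proof is correct and takes essentially the same approach as the paper's: both refine the partition of Lemma~\ref{cutS} by splitting each odd-rooted subtree at its root and absorbing that root into the piece containing its parent, with the same bookkeeping giving the bounds $D^4\beta|V(T)|$, $4D/\beta$ and $D^4$. The only difference is that you perform all splittings simultaneously (justified by the parity observation that the parent of an odd root is never itself an odd root), whereas the paper carries them out sequentially along a BFS; this is purely presentational.
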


\begin{proof}
	Starting with the partition given by Lemma~\ref{cutS}, we will refine this partition as we run a breadth first search (BFS) on $(T,r)$. Suppose that in this search we have reached a vertex $v$, which is the root of a subtree in the current partition, such that $v$ and every root before $v$ in the search are at even distance from each other in the current partition.
	
	If there is a root $u$ of some subtree in the current partition, which is at odd distance from $v$ and such that the subtree rooted at $v$ is adjacent to $u$, then we may update the partition by splitting the tree rooted at $u$ (each neighbour of $u$ is now the root of a subtree) and adding $u$ to the subtree rooted at $v$. We repeat this process for every such $u$.
	Note that after these splittings, the root of each tree that is adjacent to the tree rooted at $v$ is at even distance from all the previous roots. Moreover, a subtree of the original partition can only be split by this process when the BFS reaches its parent. Since each subtree has only one parent, they are split at most once into $D$ new subtrees and therefore the final partition has at most $4D/\beta$ new subtrees. For the same reason, the maximum degree of the cluster tree cannot go higher than $D^4$, since the original $T_\Pi$ had maximum degree at most $D^3$.

Finally, the size of each subtree grows by at most $D^3$ if the roots of its children are added. Since the update only moves forward in the BFS order, at the end of the process each subtree has size at most $D^2\beta|V(T)| + D^3 \leq D^4\beta |V(T)|$.
\end{proof}
\subsection{Structure in the reduced graph} In this subsection, we will follow a strategy inspired in the approach of Besomi, Stein and the third author~\cite{BPS3} to the Erd\H os--S\'os Conjecture for bounded degree trees and dense host graphs. We will prove that if $H$ is an $(\eta,p)$-upper-uniform graph with  $e(H)\ge(\varrho+\delta/2)pn^2/2$, then $H$ has an $(\eps,p,d)$-reduced graph $R$ with a useful substructure. That is, $R$ contains a cluster $X$ of large degree such that the neighbourhood of $X$ can be partitioned as $N(X)=V(\mathcal M)\cup \mathcal Y$, where $\mathcal M$ is a matching and $\mathcal Y$ is an independent set. Moreover, denoting by $\mathcal H$ the bipartite graph induced by $\mathcal Y$ and $\mathcal Z=N(\mathcal Y)\setminus (X\cup N(X))$, either $\mathcal M$ is large enough or every cluster in $\mathcal Y$ has large degree in $\mathcal H$. In order to find such a structure, we need the following lemma (see~\cite[Lemma~3.5]{BPS1} for a proof).
\begin{lemma}\label{lem:mat_trian}Given a graph $F$, there exists an independent set $I$, a matching $M$ and a family of triangles $\Gamma$, such that $V(F)=I\cup V(M)\cup V(\Gamma)$. Moreover, we may write $V(M)=M_1\cup M_2$, where each edge $e\in M$ is of the form $e=v_1v_2$ with $v_i\in M_i$ for $i\in\{1,2\}$, so that $N(I)\subseteq M_1$. 
	\end{lemma}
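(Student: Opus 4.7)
The plan is to prove the lemma by a simple extremal-principle argument. Among all partitions $V(F)=I\cup V(M)\cup V(\Gamma)$ satisfying the structural requirements (so $I$ is an independent set, $M$ a matching, and $\Gamma$ a family of vertex-disjoint triangles in $F$), I would fix one which minimises $|I|$. Such a partition exists: starting from any maximal matching $M$ of $F$, taking $\Gamma=\emptyset$ and $I=V(F)\setminus V(M)$ works, since maximality of $M$ forces $I$ to be independent. Finiteness of $F$ ensures the minimum is attained.

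From this extremal choice, the three pieces of required structure follow from local modification claims. First, if some $uv\in M$ had a common neighbour $i\in I$, then $\{i,u,v\}$ forms a triangle; removing $uv$ from $M$, removing $i$ from $I$, and adding $\{i,u,v\}$ to $\Gamma$ would give a valid partition with $|I|$ reduced by one, contradicting minimality. Second, if some $uv\in M$ had neighbours $i_1,i_2\in I$ with $i_1\neq i_2$, $i_1\sim u$ and $i_2\sim v$, then swapping $uv$ for the two matching edges $ui_1,vi_2$ and removing $i_1,i_2$ from $I$ would give a valid partition with $|I|$ smaller by two. Third, if some $i\in I$ is adjacent to a vertex $v$ lying in a triangle $\{v,a,b\}\in\Gamma$, then deleting this triangle from $\Gamma$, removing $i$ from $I$, and adding the edges $iv$ and $ab$ to $M$ would give a valid partition with $|I|$ smaller by one.

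Each of the three modifications contradicts minimality of $|I|$, so each configuration is absent in the extremal partition. The first two claims together say that for every edge $uv\in M$ at most one of the endpoints has a neighbour in $I$; I then partition $V(M)=M_1\cup M_2$ by placing this distinguished endpoint of each matching edge in $M_1$ (breaking ties arbitrarily when neither endpoint has an $I$-neighbour) and the other in $M_2$. Combined with the third claim, every $F$-neighbour of a vertex of $I$ lies in $M_1$, which is exactly the assertion $N(I)\subseteq M_1$.

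The argument is entirely local and I do not expect a serious obstacle. The only delicate point is to check that each of the three modifications simultaneously preserves all structural conditions; this reduces to observing that the vertices being reshuffled (the $i_j$'s lying in $I$, and $i$ together with $\{v,a,b\}$) are not reused elsewhere in the partition, so the new $I$ remains independent, the new $M$ remains a matching, and $\Gamma$ remains a vertex-disjoint family of triangles.
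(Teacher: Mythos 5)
Your proof is correct. The paper does not actually prove this lemma --- it only cites \cite[Lemma~3.5]{BPS1} --- so there is no in-paper argument to compare against, but your extremal argument (minimise $|I|$ over valid partitions, then rule out the three local configurations by swaps that strictly decrease $|I|$) is sound and is essentially the standard proof: the base partition via a maximal matching exists, each of the three modifications visibly preserves disjointness and the structural constraints, and the first two claims together give that each matching edge has at most one endpoint with a neighbour in $I$, which justifies the definition of $M_1$ and yields $N(I)\subseteq M_1$.
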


\begin{prop}
  \label{matching} Let $\eps,\delta,\varrho\in(0,1)$ and let $d=\delta/100$. There exist $n_0,k, K_0\in\mathbb N$ and $n_0>0$ such that for all $0<\eta\le\eta_0$, $p\in (0,1)$ and $n\ge n_0$ the following holds. Every $(\eta,p)$-upper uniform $n$-vertex graph $H$, with $2e(H)\ge(\varrho+\delta/2)pn^2$, admits an $(\eps,p)$-regular partition with $1/\eps\le k\le K_0$ parts such that its $(\eps,p,d)$-reduced graph $R$ satisfies the following. There exist $X\in V(R)$, a matching $\mathcal{M}$, and a bipartite induced subgraph $\mathcal H=R[\mathcal Y, \mathcal Z]$ such that
  \begin{enumerate}[$(a)$]
    \item\label{R:i} $N(X)=V(\mathcal{M})\cup \mathcal {Y}$ and $V(\mathcal M)\cap\mathcal Y=\emptyset$;
    \item\label{R:ii} $|V(\mathcal{M})|+ |\mathcal{Y}|\geq \left( \varrho + {\delta}/{3}\right)k$; and
    \item\label{R:iii} for all $Y\in \mathcal{Y}$ we have
\[	|N_\mathcal{H}(Y)|\geq \left( \varrho + \frac{\delta}{4}\right)\dfrac{k}{2} - \dfrac{|V(\mathcal{M})|}{2}.\]\end{enumerate}
\end{prop}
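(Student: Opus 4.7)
The plan is to apply the sparse regularity lemma to $H$ and then find the required structure inside the reduced graph. Pick $\eta,\eps>0$ sufficiently small with respect to $\delta$ and $k_0$ sufficiently large, and apply Theorem~\ref{reg} to $H$, producing an $(\eps,p)$-regular partition into $k\in[k_0,K_0]$ parts whose $(\eps,p,d)$-reduced graph (with $d=\delta/100$) I denote by $R$. Combining Proposition~\ref{reg:edges} with $2e(H)\ge(\varrho+\delta/2)pn^2$ and choosing $\eps,\eta$ small enough to absorb the error term $O(\eta+\eps+d)k^2$ into $\delta$ yields $2e(R)\ge(\varrho+2\delta/5)k^2$. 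Iteratively deleting clusters of $R$ of degree less than $(\varrho+\delta/3)k/2$ (moving them into the exceptional set $V_0$) preserves this edge bound, since such a degree is below half the current average; the resulting subgraph, which I still denote by $R$, has $\delta(R)\ge(\varrho+\delta/3)k/2$, and an averaging argument yields a cluster $X\in V(R)$ with $|N_R(X)|\ge(\varrho+2\delta/5)k$.

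Set $\mathcal{Z}:=V(R)\setminus(\{X\}\cup N_R(X))$, take $\mathcal{M}$ to be a maximum matching in $R[N_R(X)]$, and put $\mathcal{Y}:=N_R(X)\setminus V(\mathcal{M})$. Maximality of $\mathcal{M}$ makes $\mathcal{Y}$ independent in $R$, so $(a)$ holds by construction and $(b)$ is immediate from $|N_R(X)|\ge(\varrho+2\delta/5)k\ge(\varrho+\delta/3)k$. Define $\mathcal{H}:=R[\mathcal{Y},\mathcal{Z}]$.

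The main technical step is verifying $(c)$. If $|V(\mathcal{M})|\ge(\varrho+\delta/4)k$ then the right-hand side of $(c)$ is non-positive and the claim is trivial, so I may assume $|V(\mathcal{M})|<(\varrho+\delta/4)k$, which forces $|\mathcal{Y}|>3\delta k/20$. For each $Y\in\mathcal{Y}$, let $a_Y$ denote the number of matched edges $v_1v_2\in\mathcal{M}$ with $\{v_1,v_2\}\subseteq N_R(Y)$. Berge's theorem (absence of augmenting paths in a maximum matching) forces any such edge to have $Y$ as its \emph{unique} common $\mathcal{Y}$-neighbour, so $\sum_{Y\in\mathcal{Y}}a_Y\le|\mathcal{M}|$. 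Writing $|N_R(Y)\cap V(\mathcal{M})|=2a_Y+b_Y\le a_Y+|\mathcal{M}|$, where $b_Y$ counts matched edges with exactly one endpoint in $N_R(Y)$, and using $|N_R(Y)|\ge(\varrho+\delta/3)k/2$ together with $N_R(Y)\cap\mathcal{Y}=\emptyset$ (independence), the inequality $(c)$ reduces to the pointwise bound $a_Y\le\delta k/24-1$ for every $Y\in\mathcal{Y}$.

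The hard part is establishing this pointwise bound, since a single $Y^*$ could \emph{a priori} ``own'' many matched edges. My plan is to refine the choice of $\mathcal{M}$: if some $Y^*$ has $a_{Y^*}$ too large, swap a matched edge $v_1v_2$ with both endpoints in $N_R(Y^*)$ for the edge $Y^*v_1$, producing another maximum matching in which $Y^*$ is matched and $v_2$ is the new unmatched vertex. Choosing $\mathcal{M}$ to minimise the potential $\sum_{Y\in\mathcal{Y}}a_Y^2$ among all maximum matchings should rule out the existence of a violating $Y^*$, because the swap strictly decreases this potential---here one uses that $|\mathcal{Y}|\ge 3\delta k/20$ is large enough to spread the ``badness'' across many unmatched clusters rather than letting it reconcentrate on $v_2$. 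The delicate bookkeeping of this swap-and-potential argument is the main obstacle in the proof.
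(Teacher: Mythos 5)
Your setup (sparse regularity, Proposition~\ref{reg:edges}, deleting low-degree clusters, extracting a high-degree cluster $X$) matches the paper, and your arithmetic reducing $(c)$ to the pointwise bound $a_Y\le\delta k/24-1$ is correct. The gap is that this pointwise bound is simply false for your choice of $\mathcal M$ and $\mathcal Y$, and no swap-and-potential argument can rescue it. Suppose $R[N_R(X)]$ contains a clique $K$ on $2m+1$ vertices, disjoint from the rest of $N_R(X)$, with $2m+2$ roughly equal to the minimum degree $(\varrho+\delta/3)k/2$ (nothing in the hypotheses forbids this). Every maximum matching of $R[N_R(X)]$ leaves exactly one vertex $w\in K$ unmatched, and $w$ is adjacent to both endpoints of each of the $m$ matching edges inside $K$, so $a_w\ge m\gg\delta k$. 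Your swap replaces $v_1v_2$ by $wv_1$ and makes $v_2$ the unmatched clique vertex, but $v_2$ is again adjacent to both endpoints of every matching edge inside $K$, so the potential does not decrease: the badness is intrinsic to the odd clique, not to the choice of maximum matching, and it cannot be spread over $\mathcal Y$. If in addition $\deg_R(w)$ is close to the minimum degree, then $w$ has essentially no neighbours in $\mathcal Z$ while the right-hand side of $(c)$ is about $(\varrho+\delta/4)k/2-m>0$, so $(c)$ genuinely fails for this construction.

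The paper circumvents exactly this obstruction with two ingredients you are missing. First, instead of a maximum matching it invokes Lemma~\ref{lem:mat_trian}, which partitions $N(X)$ into an independent set $I$, a matching with sides $M_1\cup M_2$, and a family of \emph{triangles}, with the one-sidedness property $N(I)\subseteq M_1$: each $Y\in I$ meets at most half of $V(\mathcal M)$, which is precisely what the term $|V(\mathcal M)|/2$ in $(c)$ encodes (your $a_Y$ measures the failure of this one-sidedness). Second, the odd structures --- the triangles, i.e.\ exactly the configurations underlying the counterexample above --- are disposed of by passing to a $2$-blowup of the regular partition: since $K_{2,2,2}$ and $K_{2,2}$ have perfect matchings, all triangle and matching clusters are absorbed into $V(\mathcal M)$ and only the blown-up independent set survives as $\mathcal Y$. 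Without a device of this kind your argument cannot be completed.
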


\begin{proof} Given $\eps'=\min\{\eps/5,\delta/1000\}$ and $k_0=1/\eps'$, let $\eta_0,n'_0$ and $K'_0$ be the outputs of the regularity lemma (Theorem~\ref{reg}) with parameters $\eps'$ and $k_0$. Setting $n_0=n'_0$ and $\eta_0=\min\{\eta'_0,\delta/1000\}$, let $H$ be an $(\eta,p)$-upper uniform graph on $n\ge n_0$ vertices and $0<\eta\le \eta_0$. Then $H$ admits an $(\eps',p)$-regular partition $V(H)=V_0'\cup V_1'\cup\dots\cup V'_\ell$, with $1/\eps'\le \ell\le K_0$, and let us denote by $R'$ the $(\eps',p,2d)$-reduced graph of $H$ with respect to this regular partition. By Proposition~\ref{reg:edges} and the bound on $e(H)$ we have
	\begin{equation}\label{R:avg}e(R')\ge (1+\eta)^{-1}\Big(\varrho+\frac\delta 2\Big)\frac{\ell^2}2-(6\eps'+2d)\ell^2\ge\Big(\varrho+\frac{\delta}{3}\Big)\frac{\ell^2}2.
	\end{equation}
Note that \eqref{R:avg} implies that the average degree of $R'$ is at least $(\varrho+\delta/3)\ell$. Thus, by successively removing vertices of low degree, we may find a subgraph $R_0\subseteq R'$ such that
\[d(R_0)\ge \Big(\varrho+\frac{\delta}{3}\Big){\ell}\hspace{.5cm}\text{and}\hspace{.5cm}\delta(R_0)\ge \Big(\varrho+\frac{\delta}{3}\Big)\frac{\ell}2.\]
In particular, this implies that there exists a cluster $X'\in V(R_0)$ with degree at least $(\varrho + \delta/3)\ell$ in $R_0$. Applying Lemma~\ref{lem:mat_trian} to $N_{R_0}(X')$, we find an independent set $I$, a matching $\mathcal{M'}$ and a collection of triangles $\Gamma$ that partition $N_{R_0}(X')=I\cup V(\mathcal M')\cup V(\Gamma)$, and moreover, by writing $V(\mathcal M')=M_1\cup M_2$ we have that $N_{R_0}(I)\subseteq M_1$. Note that the minimum degree on $R_0$ implies that for all $Y\in I$ we have
  \begin{equation}\label{Y:deg}|N_{R_0}(Y)\setminus (X'\cup N_{R_0}(X))|\ge \left(\varrho + \dfrac{\delta}{3}\right)\dfrac{\ell}{2} - 1 - \dfrac{|V(\mathcal{M})|}{2} \geq \left(\varrho + \dfrac{\delta}{4}\right)\dfrac{\ell}{2} - \dfrac{|V(\mathcal{M})|}{2}.\end{equation}
We aim to simplify this structure by considering a blow-up of $R$. We then consider, for each $i\in [\ell]$, an arbitrary partition $V_i=V_{i,0}\cup V_{i,1}\cup V_{i,2}$ so that $|V_{0,i}|\le 1$ and $|V_{i,1}|=|V_{i,2}|$. 
Note that for every $i\in[\ell]$ we have that $|V_{i,1}|=|V_{i,2}|\geq |V_i|/3$. Therefore, Lemma~\ref{lem:regularpairs} implies that, for every $i\in [\ell]$ with $V_iV_j\in E(R')$ and $a,b\in\{1,2\}$, the pair $(V_{i,a},V_{j,b})$ is $(\eps,p)$-regular with density at least $d$. Moreover, by setting $V_0=V_0'\cup V_{1,0}\cup\dots\cup V_{\ell,0}$ we conclude that $V(H)=V_0\cup V_{1,2}\cup V_{2,2}\cup\dots \cup V_{\ell,1}\cup V_{\ell,2}$ is an $(\eps,p)$-regular partition with $2\ell+1$ parts. Let $R$ be the $(\eps,p,d)$-reduced graph of $H$ with respect to this partition, and let $k=2\ell$ be the number of vertices of $R$ (note that $R$ contains a $2$-blowup of $R'$). 

Let $X$ be one of the clusters coming from $X'$, and $\mathcal Y$ be the set of all  the $V_{i,a}$ such that $V'_i\in I$ and $a\in\{1,2\}$. Note that $K_{2,2,2}$ and $K_{2,2}$, the $2$-blowups of a triangle and of an edge respectively, each contains a perfect matching. Therefore the set $\{V_{i,a}: V_i\in \mathcal{M}'\cup \Gamma\text{ , } a\in\{1,2\}\}$ contains a perfect matching in $R$, which we denote by $\mathcal{M}$. Let $\mathcal Z=N_R(\mathcal Y)\setminus (X\cup N_R(X))$ and let $\mathcal H$ as the bipartite graph induced by $\mathcal Y$ and $\mathcal Z$. It is straightforward to check that $X$, $\mathcal M$ and $\mathcal H$ satisfy \eqref{R:i} and \eqref{R:ii} and that \eqref{R:iii} follows from~\eqref{Y:deg}.

\end{proof}

  \subsection{Proof of Theorem \ref{Resilience}}
  
   As we mentioned in the sketch of the proof, the idea is to use the structure given by Proposition~\ref{matching}, that is, the cluster $X$, the matching $\mathcal M$ and the bipartite graph $\mathcal H$. To do so, we first need to cut the tree into a family $(T_i,r_i)_{i\in [t]}$ of tiny subtrees such that the root of all the subtrees are in the same colour class (see Proposition~\ref{cutP}). The main challenge in the proof is the assignment of each $T_i$ to some edge of $\mathcal M\cup \mathcal H$ into which it will be embedded. After this, we remove some bad vertices from each cluster so that each subtree $T_i$ is assigned to a pair $(Y_{i,1},Y_{i,2})$ which induces a bipartite expander graph and that connects well with a large subset of $X$ (see Claim~\ref{res:claim}). Finally, by using an embedding tool due to Balogh, Csaba and Samotij~\cite[Corollary~12]{Samotij}, we embed each subtree into the pair that was assigned to that tree. We state this result below.
  \begin{lemma}
  	\label{magmar}
  	Let $D\geq 2$ and let $H$ be a bipartite graph with bipartition classes $V_1$ and $V_2$, where $|V_1|\leq |V_2|$. Suppose that $H$ is a bipartite $(m,D+1)$-expander with $0<m<|V_1|/(2D+1)$. Then $H$ contains all trees $T$ with maximum degree at most $D$ and bipartition classes $A_1$ and $A_2$ such that $|A_1|\leq|V_1|-(2D+1)m$ and $|A_2|\leq |V_2|-(2D+1)m$. Furthermore, for every $i\in\{1,2\}$, $u\in A_i$ and $v\in V_i$ there exists an embedding $\varphi:V(T)\rightarrow H$ such that $\varphi(u)=v$.
  \end{lemma}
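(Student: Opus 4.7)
The plan is to prove this lemma by a leaf-by-leaf extension argument analogous to Haxell's proof of Theorem \ref{haxxel}, using Lemma \ref{goodhaxxel} as the main engine. I would fix an enumeration of $V(T)$ starting at $u$ such that each vertex (other than $u$) is a leaf of the subtree induced by the vertices preceding it; any BFS order from $u$ works. The strategy is then to build the embedding $\varphi$ one vertex at a time in this order, initially mapping $u \mapsto v$, and maintaining the invariant that the current partial embedding of a subtree $S \subseteq T$ is $m$-good in $H$ and satisfies $\varphi(A_i \cap V(S)) \subseteq V_i$ for $i \in \{1,2\}$.

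For the base case, the single-vertex embedding $\varphi(u) = v$ is $m$-good: for any $X \subseteq V_i$ with $1 \le |X| \le m$, property \eqref{bipexp:1} of the bipartite $(m,D+1)$-expansion gives $|N_H(X)| \ge (D+1)|X|$, so $|N_H(X) \setminus \{v\}| \ge (D+1)|X| - 1 \ge D|X|$, which comfortably matches the right-hand side of the definition of $m$-good. For the inductive step, given a partial $m$-good embedding of a subtree $S$ and a new vertex $w$ to be attached as a leaf to some $s \in V(S)$, I would invoke Lemma \ref{goodhaxxel}. The hypothesis $d_S(s) < D$ is automatic since the final tree has maximum degree $D$ and $s$ still has at least one unembedded neighbour (namely $w$). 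The hypothesis \eqref{mew} reduces to showing that for every $X \subseteq V_i$ with $m \le |X| \le 2m$, there are at least $2Dm + 2$ unused neighbours of $X$ in $V_{3-i}$. Property \eqref{bipexp:2} forces the non-neighbourhood of $X$ in $V_{3-i}$ to have size at most $m-1$, and the slack $|V_{3-i}| - |A_{3-i}| \ge (2D+1)m$ leaves a buffer of at least $2Dm + 1$ unused neighbours; the extra unit comes from the fact that at least one vertex of $V(T)$ is still unembedded during the inductive step, so $|\varphi(V(S)) \cap V_{3-i}|$ is strictly less than $|A_{3-i}|$ in the relevant cases. Since $H$ is bipartite, a leaf added at $\varphi(s) \in V_i$ lands in $V_{3-i}$, which matches the fact that $w$ lies on the side of $T$ opposite to $s$; hence our side-assignment invariant is preserved automatically.

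The main obstacle is the bookkeeping for hypothesis \eqref{mew} at every step up to the penultimate one; this is an elementary but finicky calculation that must delicately balance the use of property \eqref{bipexp:2}, the size constraint $|A_i| \le |V_i| - (2D+1)m$, and the observation that the partial embedding never saturates both sides at once. Once \eqref{mew} is verified throughout, iterating the extension $|V(T)| - 1$ times produces the required embedding $\varphi \colon V(T) \to V(H)$ with $\varphi(u) = v$; the symmetric case $i = 2$ follows by swapping the roles of the two sides. Note that the roles of $u$ and $v$ can be fixed freely at the start, which is precisely the ``furthermore'' conclusion of the lemma.
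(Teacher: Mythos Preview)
Your approach is exactly the one the paper has in mind: the paper does not prove Lemma~\ref{magmar} itself but cites it from Balogh--Csaba--Samotij, and it explicitly says that Lemma~\ref{goodhaxxel} is extracted as the induction step of that proof. So building $\varphi$ leaf by leaf from the single-vertex embedding $u\mapsto v$, maintaining $m$-goodness, and invoking Lemma~\ref{goodhaxxel} at each step is the intended route.

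There is, however, a genuine bookkeeping issue in your verification of \eqref{mew} that the phrase ``in the relevant cases'' does not resolve. Suppose you are about to attach a leaf $w$ at $s$ with $\varphi(s)\in V_j$, so $w\in A_{3-j}$. For $X\subseteq V_j$ your argument is fine: since $w\in A_{3-j}$ is not yet embedded, $|\varphi(V(S))\cap V_{3-j}|\le |A_{3-j}|-1$, and together with the non-neighbourhood bound $m-1$ you get $|N_H(X)\setminus\varphi(V(S))|\ge |V_{3-j}|-(m-1)-(|A_{3-j}|-1)\ge 2Dm+2$. But Lemma~\ref{goodhaxxel}, as stated, also demands \eqref{mew} for $X\subseteq V_{3-j}$, and here you only get
\[
|N_H(X)\setminus\varphi(V(S))|\ \ge\ |V_j|-(m-1)-|A_j|\ \ge\ (2D+1)m-m+1\ =\ 2Dm+1,
\]
since it may well happen that all of $A_j$ is already embedded (e.g.\ at the last step). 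This is one short of the required $2Dm+2$, and your ``extra unit'' argument does not apply on this side.

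The fix is not to change the strategy but to look inside the proof of Lemma~\ref{goodhaxxel}: when the new leaf lands in $V_{3-j}$, no vertex of $V_j$ is consumed, so for $X\subseteq V_{3-j}$ the $m$-goodness inequality on the $V_j$ side is inherited directly from the previous step (indeed the right-hand side can only decrease, since the newly embedded $w$ contributes $D-1$ instead of $D$ if $\varphi(w)\in X$). In other words, condition \eqref{mew} is only genuinely needed on the side of $\varphi(s)$, which is exactly the side your argument handles. You should either state and use this one-sided refinement of Lemma~\ref{goodhaxxel}, or point explicitly to the relevant step in Samotij's proof; as written, a black-box application of Lemma~\ref{goodhaxxel} does not close.
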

  Although it is not true that $(\eps,p)$-regular pairs are bipartite expanders (for example they can have isolated vertices), any large subgraph of an $(\eps,p)$-regular pairs contains an almost spanning subgraph which is a bipartite expander. The following lemma was proved in \cite[Lemma~19]{Samotij}, and its proof is similar to that of Proposition~\ref{indhaxxel}.
  
  \begin{lemma}
	\label{pikachu}
	Let $(A,B)$ be an $(\varepsilon,p)$-regular pair such that $d_p(A,B)>\varepsilon$. Suppose that $|A|=|B|=m$ and let $A'\subseteq A$ and $B'\subseteq B$ be sets of size at least $(4D+6)\varepsilon m$. Then there are subsets $A''\subseteq A'$ and $B''\subseteq B'$ such that
	\begin{enumerate}[(a)]
		\item $|A'\setminus A''|\leq \varepsilon m$ and $|B'\setminus B''|\leq \varepsilon m$, and
		\item the subgraph induced by $(A'',B'')$ is a bipartite $(\varepsilon m,2D+2)$-expander.
	\end{enumerate}
\end{lemma}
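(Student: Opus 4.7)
The approach mirrors the stripping argument from Proposition~\ref{indhaxxel}, adapted to the bipartite setting. First observe that condition~(b) of the bipartite $(\varepsilon m, 2D+2)$-expander definition is automatic from regularity: for any $X_1\subseteq A''\subseteq A$ and $X_2\subseteq B''\subseteq B$ with $|X_1|,|X_2|\ge\varepsilon m=\varepsilon|A|=\varepsilon|B|$, Lemma~\ref{lem:regularpairs} gives $d_p(X_1,X_2)\ge d_p(A,B)-\varepsilon>0$, so $e(X_1,X_2)>0$. Hence the task reduces to arranging the neighbourhood-expansion condition~(a) while removing at most $\varepsilon m$ vertices from each side.

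The plan is to iteratively strip bad sets. Starting with $A''=A'$ and $B''=B'$, while there is some $X\subseteq A''$ with $1\le|X|\le\varepsilon m$ and $|N(X)\cap B''|<(2D+2)|X|$, replace $A''$ by $A''\setminus X$; perform the symmetric step on the $B$-side. Since $|A''|+|B''|$ decreases at each stage the process terminates, and when no bad set remains, condition~(a) holds by construction. Let $X_1,X_2,\dots$ be the sets successively removed from $A'$, and set $Z_A=A'\setminus A''$ and $Z_B=B'\setminus B''$ at termination. The key observation is that when $X_j$ was removed, the current $B$-side was a superset of the final $B''$, so $|N(X_j)\cap B''|\le|N(X_j)\cap B''_j|<(2D+2)|X_j|$. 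Taking the union over $j$,
\[|N(Z_A)\cap B''|\le\sum_j|N(X_j)\cap B''|<(2D+2)|Z_A|,\]
and the analogous bound holds for $Z_B$.

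To bound $|Z_A|\le\varepsilon m$, suppose for contradiction that $|Z_A|$ exceeds $\varepsilon m$ at some point, and look at the first stage when this happens. Since the last added set had size at most $\varepsilon m$, at this moment $\varepsilon m<|Z_A|\le 2\varepsilon m$, while $|Z_B|\le\varepsilon m$ still. Letting $B_0=(B'\setminus Z_B)\setminus N(Z_A)$, the running union bound yields
\[|B_0|\ge|B'|-|Z_B|-(2D+2)|Z_A|\ge(4D+6)\varepsilon m-\varepsilon m-(2D+2)\cdot 2\varepsilon m=\varepsilon m.\]
Thus $Z_A\subseteq A$ and $B_0\subseteq B$ both have size at least $\varepsilon m=\varepsilon|A|=\varepsilon|B|$ yet span no edge, contradicting $(\varepsilon,p)$-regularity of $(A,B)$ with $d_p(A,B)>\varepsilon$. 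The symmetric argument bounds $|Z_B|\le\varepsilon m$, completing the proof.

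The main obstacle is simply the interleaved bookkeeping: one must justify that the union bound on $|N(Z_A)\cap B''|$ may be applied with the \emph{final} $B''$ rather than the intermediate $B''_j$, which is the reason for removing sets greedily and using that neighbourhoods can only shrink as $B''$ shrinks. Everything else follows from the slack built into the hypothesis $|A'|,|B'|\ge(4D+6)\varepsilon m$, chosen precisely so that the arithmetic in the last display yields exactly $\varepsilon m$.
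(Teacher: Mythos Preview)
Your argument is correct and is precisely the bipartite stripping variant of Proposition~\ref{indhaxxel} that the paper points to (the lemma is quoted from \cite{Samotij} without proof, with the remark that it is proved just as Proposition~\ref{indhaxxel}). One small logical wrinkle: the assertion ``while $|Z_B|\le\varepsilon m$ still'' is not justified as written, since nothing prevents $|Z_B|$ from having crossed $\varepsilon m$ before $|Z_A|$ does; the clean fix is to stop at the first stage when \emph{either} of $|Z_A|,|Z_B|$ exceeds $\varepsilon m$ and argue, by symmetry, for whichever side crossed first, so that the other side is automatically at most $\varepsilon m$ at that moment. With that adjustment the contradiction and the arithmetic $(4D+6)\varepsilon m-\varepsilon m-(2D+2)\cdot 2\varepsilon m=\varepsilon m$ go through exactly as you wrote.
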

  
 Now we are ready to prove Theorem~\ref{Resilience}.
  
\begin{proof}[Proof of Theorem \ref{Resilience}]
Let $n'_0, k, K_0$ and $\eta_0$ be the outputs of Proposition~\ref{matching} with inputs $\delta,\varrho$ and $\varepsilon={\delta^4}/{(2^{28}D^6)}$. We set
\begin{equation}\label{chikorita}\beta=\dfrac{\delta^2}{2^{12}kD^4}\hspace{.5cm}\text{and}\hspace{.5cm}C_0=\frac{2^{17}10^2D^5K_0^2}{\delta^3},  
\end{equation}
and let $n_0=\max\{n_0',\beta^{-1}\}$ and $n\ge n_0$. Given $p\ge C_0/n$ and $0<\eta\le\eta_0$, let $G$ be an $(\eta,p)$-uniform graph on $n$ vertices and let $G'\subseteq G$ be a subgraph with 
\begin{equation*}2e(G')\ge (\varrho+\delta)2e(G)\ge (1-\eta)(\varrho+\delta)pn^2\ge \left(\varrho+\frac\delta 2\right)pn^2.
\end{equation*}
Since $G'$ is $(\eta,p)$-upper uniform, by  Proposition~\ref{matching} we may find an $(\eps,p)$-regular partition $V(G')=V_0\cup V_1\cup\dots\cup V_k$, with $1/\eps\le k\le K_0$, such that the $(\eps,p,\delta/100)$-reduced graph $R$, with respect to this partition, contains a cluster $X$, a matching $\mathcal{M}$ and a bipartite subgraph $\mathcal H$, with vertex set $V(\mathcal H)=\mathcal Y\cup \mathcal Z$, satisfying the conclusions of Proposition~\ref{matching}.

Let $T\in \mathcal{T}(\varrho n,D)$ be given. We consider the bipartition of $T$ that assigns colour $1$ to the smaller partition class of $T$ and colour $2$ to the larger one, and then we  choose an arbitrary vertex $r$ in colour $1$ as the root of $T$. We apply Proposition~\ref{cutP} to $(T,r)$, with parameter $\beta$,  obtaining a family $(T_i,r_i)_{i\in [t]}$ of $t\leq 4D/\beta$ rooted trees, each of size at most $D^4\beta\varrho n$. Furthermore, each root $r_i$ is at even distance from $r$ and therefore every root has colour $1$. For $i\in[t]$, let us write $T_{i,j}$ for the set of vertices of $T_i$ having colour $j\in\{1,2\}$. 

Let $m$ denote the size of the clusters and observe that $m\ge (1-\eps)n/k$. The heart of the proof is embodied by the following claim.  
 \begin{claim}\label{res:claim}
  For each $i\in [t]$, there are sets $(Y_{i,1},Y_{i,2})$ and $W_i\subseteq X$ such that the following holds.
 \begin{enumerate}[$(1)$]
 	\item\label{claim:res 2} The sets $\{Y_{i,j}: (i,j)\in [\ell]\times \{1,2\} \}$ are pairwise disjoint and disjoint from $X$
 	\item\label{claim:res 3}  $|Y_{i,j}| \ge  |T_{i,j}| + 13D\varepsilon m$, for each $j\in\{1,2\}$.
  \item\label{claim:res 4} $G'[Y_{i,1},Y_{i,2}]$ is a bipartite $(\varepsilon m, 2D+2)$-expander.
 \item\label{claim:res 5} Every vertex of $Y_{i,2}$ has at least ${\delta}pm/(200)$ neighbours in $W_i$.
 \item\label{claim:res 6} If $T_\ell$ is a child of $T_i$ in the cluster tree, then every vertex of $W_i$ has at least $D+1$ neighbours in $Y_{\ell,2}$.
 \end{enumerate}
 \end{claim}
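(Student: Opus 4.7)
The plan is to assign each subtree $T_i$ to an edge $e_i\in\mathcal{M}\cup\mathcal{H}$ of the reduced graph $R$, and then to take $Y_{i,1},Y_{i,2}$ as carefully chosen subsets of the two clusters of $e_i$, with $Y_{i,2}$ placed on the side that is adjacent to $X$ in $R$. The set $W_i\subseteq X$ will be obtained from $X$ by discarding the vertices having too few neighbours in the clusters hosting $Y_{\ell,2}$, for each child $T_\ell$ of $T_i$ in the cluster tree. Multiple subtrees will in general share a single edge, which is not a problem because each cluster has size $m\ge(1-\eps)n/k$ whereas each $T_i$ has at most $D^4\beta\varrho n\ll m$ vertices by the choice of $\beta$ in~\eqref{chikorita}. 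Note that every root $r_i$ has colour $1$, while colour $1$ was chosen to be the smaller global bipartition class of $T$; this global imbalance is what will make the packing step feasible.

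The assignment would be produced greedily. I would first pack subtrees into edges of $\mathcal{M}$; since both endpoints of a matching edge lie in $N_R(X)$, either side can host $Y_{i,2}$, and we split $(T_{i,1},T_{i,2})$ so as to balance the loads of the two clusters. Once $\mathcal{M}$ is saturated, we continue with $\mathcal{H}$-edges $YZ$, $Y\in\mathcal{Y}$, placing $T_{i,2}$ in $Y$ (the only side adjacent to $X$) and $T_{i,1}$ in $Z$. Since $|T_{*,1}|\le|T_{*,2}|$ globally, the cumulative load on the $\mathcal{Y}$-side always dominates that on the $\mathcal{Z}$-side. Consequently, if some $Y\in\mathcal{Y}$ were to become a bottleneck---meaning every $Z\in N_{\mathcal{H}}(Y)$ is already saturated---then at least $|N_{\mathcal{H}}(Y)|$ clusters of $\mathcal{Z}$, and at least the same number of clusters in $\mathcal{Y}$, must be full; together with $V(\mathcal{M})$ this forces $|V(\mathcal{M})|+2|N_{\mathcal{H}}(Y)|\ge(\varrho+\delta/4)k$ saturated clusters by property~\eqref{R:iii} of Proposition~\ref{matching}. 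But the actual demand is only $\sum_i|T_i|+O(t\eps m)=\varrho n+O(\eps m/\beta)<(\varrho+\delta/8)km$ by the choice of $\beta$ and $\eps$, giving a contradiction.

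Once the assignment is fixed, let $A_i,B_i$ denote the two clusters of $e_i$ with $B_i$ adjacent to $X$, and allocate disjoint subsets $A_i^\star\subseteq A_i$, $B_i^\star\subseteq B_i$ of sizes $|T_{i,1}|+(13D+3)\eps m$ and $|T_{i,2}|+(13D+3)\eps m$ to each subtree sharing the edge. Applying Lemma~\ref{pikachu} to $(A_i^\star,B_i^\star)$ inside the $(\eps,p)$-regular pair $(A_i,B_i)$ yields $Y_{i,1}\subseteq A_i^\star$ and $Y_{i,2}\subseteq B_i^\star$, each losing at most $\eps m$ vertices, such that $G'[Y_{i,1},Y_{i,2}]$ is a bipartite $(\eps m,2D+2)$-expander, giving properties~\eqref{claim:res 2}--\eqref{claim:res 4}. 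For $W_i$, I would start with $X$ and, for each of the at most $D^4$ children $T_\ell$ of $T_i$ in the cluster tree, discard the at most $\eps|X|$ vertices of $X$ with fewer than $(\delta/100-\eps)pm\ge D+1$ neighbours in $Y_{\ell,2}$, which is justified by Lemma~\ref{lem:regularpairs}(2) applied to the regular pair $(X,B_\ell)\supseteq(X,Y_{\ell,2})$. The resulting $W_i$ has size at least $(1-D^4\eps)|X|\ge|X|/2$, delivering~\eqref{claim:res 6}. Finally, since $(X,Y_{i,2})$ is a regular pair of density $\ge\delta/100-\eps$ and $|W_i|\ge|X|/2$, the pair $(W_i,Y_{i,2})$ remains regular with only mildly degraded parameters; a last application of Lemma~\ref{lem:regularpairs}(2) isolates at most $\eps m$ vertices in $Y_{i,2}$ with fewer than $\delta pm/200$ neighbours in $W_i$, which we discard from $Y_{i,2}$---safely absorbed by the remaining slack---yielding~\eqref{claim:res 5}.

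The hard part will be the greedy assignment step, because the rigid requirement that $T_{i,2}$ land on the $\mathcal{Y}$-side of every $\mathcal{H}$-edge could, a priori, saturate $\mathcal{Y}$ while $\mathcal{Z}$ remains underused. It is precisely the combination of colour $1$ being the smaller global class of $T$ with the quantitative degree guarantee of property~\eqref{R:iii} of Proposition~\ref{matching} that rules out this failure mode and turns the packing into a feasible greedy procedure with enough slack to absorb the subsequent regularity losses.
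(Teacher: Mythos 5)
Your two-stage architecture (assign each $T_i$ to an edge of $\mathcal M\cup E(\mathcal H)$ with padded slots, then clean via Lemma~\ref{pikachu} and Lemma~\ref{lem:regularpairs} to get the expanders, the sets $W_i$, and the degree condition into $W_i$) is the same as the paper's, and your Stage-2 details are essentially correct. The gap is in the assignment step, precisely where you flag the difficulty. Your argument needs that, at the moment a cluster $Y\in\mathcal Y$ becomes a bottleneck, the cumulative load placed on the $\mathcal Y$-side of $\mathcal H$ is at least the cumulative load placed on the $\mathcal Z$-side; only then does ``all neighbours of $Y$ in $\mathcal Z$ are full'' (which gives $\mathcal Z$-load $\ge d_{\mathcal H}(Y)m\ge Qm/2$) upgrade to total load $\ge Qm$, which is what contradicts $|T|=\varrho n$. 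You justify this domination by the \emph{global} inequality $\sum_i|T_{i,1}|\le\sum_i|T_{i,2}|$, but that inequality does not pass to the sub-collection of subtrees that happen to be left over after $\mathcal M$ is saturated, nor to an arbitrary prefix of that sub-collection. Concretely, if the subtrees consumed by the $\mathcal M$-phase are exactly the colour-$2$-heavy ones, the remainder can satisfy $|T_{i,1}|>|T_{i,2}|$ for every $i$, the $\mathcal Z$-side then fills faster than the $\mathcal Y$-side, and at the bottleneck you only get total load roughly $Qm/2$; with $\mathcal M$ small (even empty) this is about $\varrho km/2$, far short of a contradiction. Likewise your parenthetical ``at least the same number of clusters in $\mathcal Y$ must be full'' is exactly the unproved assertion, not a consequence of anything you have established.

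The paper closes this hole by reversing your order and imposing a specific ordering on the subtrees: it fixes a permutation $\sigma$ with $|T_{\sigma(i),2}|-|T_{\sigma(i),1}|$ non-increasing, assigns subtrees to $\mathcal H$ \emph{first} following $\sigma$, and only then fills $\mathcal M$. Since colour $2$ is the larger class of $T$, every prefix satisfies $\sum_{i\le\ell}(|T_{\sigma(i),2}|-|T_{\sigma(i),1}|)\ge 0$ (inequality~\eqref{butterfree}), which is exactly the prefix-wise domination your counting requires at the moment of a bottleneck. Your proposal could be repaired along the same lines (e.g.\ route the most colour-$2$-heavy subtrees to $\mathcal H$ and process them in decreasing order of imbalance), but as written the key inequality is unsupported and the packing can fail.
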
	
Before proving Claim~\ref{res:claim}, let us show how to derive Theorem~\ref{Resilience} from it. Assume that we have ordered $[t]$ so that if $T_\ell$ is below $T_i$, with respect to the root of $T$, then $i\le \ell$. Starting with the subtree containing $r$, we will embed $(T_i)_{i\in[t]}$ following this ordering. Let us denote by $\varphi$ the partial embedding of $T$. For every embedded subtree $(T_i,r_i)$ we will ensure that
\begin{enumerate}[$(a)$]
	\item\label{res:emb1} $\varphi(r_i)\in W_s$ for some $s\le i$, and
	\item\label{res:emb2} $\varphi(T_{i,j}\setminus \{r_i\})\subseteq Y_{i,j}$ for $j\in \{1,2\}$. 
\end{enumerate}
Suppose we are about to embed a subtree $T_\ell$ which is a child of some subtree $T_i$ that was already embedded satisfying~\eqref{res:emb1} and ~\eqref{res:emb2}. Let $v_i\in V(T_i)$ be the parent of $r_\ell$ and note that $v_i$ is embedded into some vertex $\varphi(v_i)\in Y_{i,2}$ (since $v_i$ is adjacent to $r_\ell$ and every root has colour $1$).
\begin{figure}[ht]
	\centering
	\includegraphics[scale=.7]{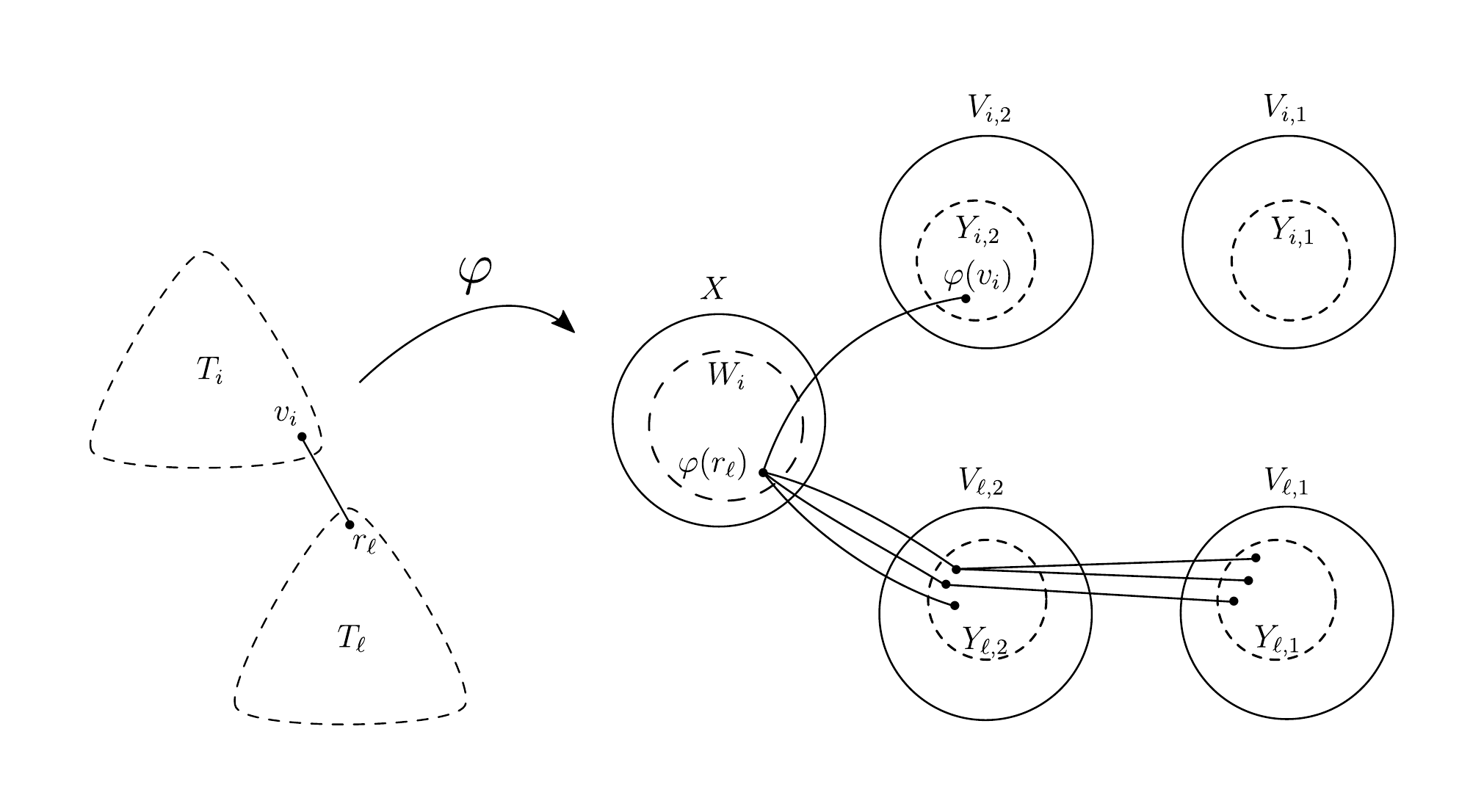}
	\caption{Embedding of $T_\ell$}
\end{figure}
Then, because of Claim~\ref{res:claim}~(\ref{claim:res 5}) 
\[| W_i\cap N_{G'}(\varphi(v_i))|\ge \dfrac{\delta}{200}pm\ge(1-\eps)\frac{\delta C_0}{200k}\ge \dfrac{8D}{\beta}\ge 2t\]
and therefore  at least one neighbour of $\varphi(v_i)$ has not been used during the embedding. We choose any unused vertex $w_\ell\in W_i\cap N_{G'}(\varphi(v_i))$ and set $\varphi(r_\ell)=w_\ell$ (when we embed $T_1$, we choose any vertex vetex $w_1\in W_1$ as the image of $r_1=r$). By Claim~\ref{res:claim}~(\ref{claim:res 4}) we know that $G'[Y_{i,1},Y_{i,2}]$ is a bipartite $(\eps m, 2D+2)$-expander, we will prove now that $G'[Y_{\ell,1}\cup \{w_\ell\}, Y_{\ell,2}]$ is a bipartite $(\varepsilon m +1,D+1)$-expander.

Indeed, since $G'[Y_{i,1},Y_{i,2}]$ is a bipartite $(\eps m, 2D+2)$-expander is easy to see that the expansion conditions hold for every subset $X$ of $Y_{\ell,1}$ or of $ Y_{\ell,2}$. Let $X'\subseteq Y_{\ell,1}$ be non-empty and let us consider $X=X'\cup\{w_\ell\}$. If $|X'|\le \eps m$, then we have 
\[|N_{G'}(X)\cap Y_{\ell,2}|\ge (2D+2)|X'|\ge (D+1)|X|,\]
where the first inequality follows because $G'[Y_{\ell,1},Y_{\ell,2}]$ is bipartite $(\eps m,2D+2)$-expander. Similarly, if $|X'|\ge \eps m$ then we have
\[|N_{G'}(X)\cap Y_{\ell,2}|\ge |N_{G'}(X')\cap Y_{\ell,2}|\ge |Y_{\ell,2}|-(\eps m+1).\]
Finally, if $X=\{w_\ell\}$ then by Claim~\ref{res:claim}~(\ref{claim:res 6}) we know that $|N_{G'}(w_\ell)\cap Y_{\ell,2}|\ge D+1$, and therefore $G'[Y_{\ell,1}\cup \{w_\ell\}, Y_{\ell,2}]$ is a bipartite $(\varepsilon m +1,D+1)$-expander.

Lemma~\ref{magmar} allows us to embed trees with bipartition classes of size $|Y_{\ell,j}|- (2D+1)(\varepsilon +1)$. We combine this information with property~(\ref{claim:res 3}) of Claim~\ref{res:claim} to get that
\begin{equation*}
	|Y_{\ell,j}| - (2D+1)(\varepsilon m +1)\geq |T_{\ell,j}| + 13D\varepsilon m - 6D\varepsilon m \geq|T_{\ell,j}|
\end{equation*}
for each $j\in\{1,2\}$. Therefore, Lemma~\ref{magmar} yields an embedding of $T_\ell$ into $(Y_{\ell,1}\cup\{w_\ell\}, Y_{\ell,2})$ so that $\varphi(T_{\ell,j}\setminus \{r_\ell\})\subseteq Y_{\ell,j}$ for $j\in\{1,2\}$ and $r_\ell$ is mapped to $w_\ell$. We finish by remarking that Claim~\ref{res:claim}~(\ref{claim:res 2}) ensures that this embedding $T_\ell$ does not intersect the previously embedded subtrees.\end{proof}
\begin{proof}[Proof of Claim~\ref{res:claim}]
 Let $\sigma$ be a permutation on $[t]$ such that for all $1\le i<j\le t$ we have
\begin{equation*}|T_{\sigma(i),2}|-|T_{\sigma(i),1}|\ge |T_{\sigma(j),2}|-|T_{\sigma(j),1}|.
\end{equation*}
We chose colour $2$ to be the larger class of $V(T)$ so that for every $\ell\in [t]$ we have
\begin{equation}
\label{butterfree}
	\sum_{i=1}^{\ell} (|T_{\sigma(i),2}|-|T_{\sigma(i),1}|) \geq 0.
\end{equation}
The proof of Claim~\ref{res:claim} will be done in two stages. In the first stage, for each $i\in[t]$ the subtree $T_i$ will be assigned to a pair of sets $(X_{i,1},X_{i,2})$, contained in some edge from $\mathcal M\cup E(\mathcal H)$, such that  $|X_{i,j}|=|T_{i,j}|+16D\eps m$ for $j\in\{1,2\}$. In the second stage, we will remove some vertices from each set in order to find the sets $W_i\subseteq X$ and $Y_{i,j}\subseteq X_{i,j}$ satisfying the properties $(1)-(5)$ from Claim~\ref{res:claim}.\\

\noindent\textbf{Stage 1 (Assignation):} In this stage we will prove that for each $i\in [t]$, there exists an edge $V_{i,1}V_{i,2}\in \mathcal{M} \cup E(\mathcal H)$ and sets $X_{i,j} \subseteq V_{i,j}$, for $j\in\{1,2\}$,  such that 
	\begin{enumerate}[$(A)$]
		\item\label{assig:1} $X_{i,j}\cap X_{\ell,j'}=\emptyset$ if $\{i,j\}\not=\{\ell,j'\}$;
		\item\label{assig:2} $|X_{i,j}| = |T_{i,j}| + 16D\varepsilon m$; and
		\item\label{assig:3} if $(V_{i,1},V_{i,2})\in E(\mathcal H)$ then $V_{i,2} \in \mathcal{Y}$.
	\end{enumerate}
The assignment will be done in two steps following the order given by $\sigma$. At step 1 we assign trees to edges from $\mathcal H$ until we use a large proportion of $\mathcal Y\cup\mathcal Z$, and at step 2 we will use  edges from $\mathcal M$ ensuring that the clusters from each edge of $\mathcal M$ are used in a balanced way. \\

 \noindent\textbf{Step 1:} We will assume that $|\mathcal M|\le (\varrho+\delta/16)k$, as otherwise we just skip this step. Let us set $Q=(\varrho+\delta/4)k-|V(\mathcal M)|$ and note that we have 
 \begin{equation*}|\mathcal Y|\ge Q\ge \frac \delta{16}k\hspace{.5cm}\text{and}\hspace{.5cm}d_\mathcal H(Y)\ge Q/2\text{ for all $Y\in\mathcal Y$.}\end{equation*}
We will choose sets in $\mathcal{Y}\cup \mathcal{Z}$ until we have assigned at least $(1-\delta/16)Qm$ vertices to $\mathcal Y\cup\mathcal Z$. Following the order of $\sigma$, assume that we have made the assignation up to some $0\le\ell\le t-1$ and we are about to assign the tree $T_{\sigma(\ell+1)}$. Suppose that there are $Y\in \mathcal{Y}$ such that
\begin{equation}\label{squirtle}\sum_{X_{\sigma(i),2}\subseteq Y} |X_{\sigma(i),2}| \leq m - (D^4\beta n + 16D\varepsilon m), \end{equation}

\noindent and $Z\in N_\mathcal H(Y)$ with
\begin{equation}\label{charmander}\sum_{X_{\sigma(i),1}\subseteq Z} |X_{\sigma(i),1}| \leq m - (D^4\beta n + 16D\varepsilon m).\end{equation}

\noindent Since $|T_{\sigma(\ell+1)}|\le D^4\beta\varrho n$, we can select sets  $X_{\sigma(\ell+1),1}\subseteq Z$ and $X_{\sigma(\ell+1),2}\subseteq Y$, disjoints from the previously chosen sets, such that $|X_{\sigma(\ell+1),j}| = |T_{\sigma(\ell+1),j}| + 16D\varepsilon m$ for $j\in\{1,2\}$. So, if there is no $Y\in\mathcal Y$ satisfying~\eqref{squirtle}, then we have
\begin{equation*}
	\begin{split}
		\sum_{i=1}^\ell |T_{\sigma(i)}| \geq \sum_{i=1}^\ell |T_{\sigma(i),2}| &= \sum_{i=1}^\ell\big(|X_{\sigma(i),2}| - 16D\varepsilon m\big)\\
					       & \geq |\mathcal{Y}|m  - t\cdot 16D\varepsilon m - k\cdot (D^4\beta n + 16D\varepsilon m)\\
					       & \ge|\mathcal Y|m-\frac{\delta^2}{16^2}km\\
					    &\ge \Big(1-\frac\delta{16}\Big)Qm.
\end{split}
\end{equation*}
\noindent This means that we have already used enough vertices from $\mathcal Y\cup\mathcal Z$. On the other hand, if every $Y$ satisfying~\eqref{squirtle} has no neighbours satisfying~\eqref{charmander}, we may use \eqref{butterfree} to deduce 
\begin{equation*}
\label{beedrill}
	\begin{split}
		\sum_{i=1}^\ell |T_{\sigma(i)}| \geq 2\sum_{i=1}^\ell |T_{\sigma(i),1}| &= 2\sum_{i=1}^\ell\big(|X_{\sigma(i),1}| - 16D\varepsilon m\big)\\
		&\geq 2d_\mathcal H(Y)m - t\cdot 32D\varepsilon m - k\cdot 2(D^4\beta n + 16D\varepsilon m)\\
											    &\geq Qm  - \frac{\delta^2}{16^2}km\\
						&\ge \Big(1-\frac\delta{16}\Big) Qm.
	\end{split}
\end{equation*}
This means that if at step $\ell+1\in[t]$ we could not find a pair $(Y,Z)$ satisfying~\eqref{squirtle} and~\eqref{charmander}, then we have used vertices at least $(1-\delta/{16})Qm$ vertices from $\mathcal{Y}\cup\mathcal Z$ at step $\ell$. \\

\noindent\textbf{Step 2:} Let $0\le \ell_0\le t$ be such that $T_{\sigma(1)},\dots, T_{\sigma(\ell_0)}$ have been assigned to $\mathcal Y\cup\mathcal Z$, satisfying~\eqref{assig:1},\eqref{assig:2} and \eqref{assig:3}, and
\begin{equation}\label{charizard}\Big(1-\frac\delta{16}\Big)Qm\le\sum_{i=1}^{\ell_0} |T_{\sigma(i)}|\le \Big(1-\frac\delta{16}\Big)Qm+D^4\beta\varrho n.
\end{equation}
Assume that $\ell_0<t$, otherwise we are done. For $\ell_0+1\le i\le t$ we will assign each $T_{\sigma(i)}$ to some edge $AB\in \mathcal M$. At each step we will ensure that for every edge $AB\in\mathcal M$ we have
\begin{equation}\label{balance}
 \left|\sum_{X_{\sigma(i),j}\subseteq A} |X_{\sigma(i),j}| -\sum_{X_{\sigma(i),j}\subseteq B} |X_{\sigma(i),j}|\right| \leq D^4\beta\varrho n.   
\end{equation}Suppose we are about to assign a subtree $T_{\sigma(\ell)}$, for some $\ell\ge\ell_0+1$, and that~\eqref{balance} holds at step $i=\ell-1$ (note that~\eqref{balance} holds trivially at step $\ell_0$). Suppose that  there is an edge $AB\in\mathcal M$ such that
\begin{equation}\label{ratata}\max\Big\{\sum_{X_{\sigma(i),j}\subseteq A} |X_{\sigma(i),j}| , \sum_{X_{\sigma(i),j}\subseteq B} |X_{\sigma(i),j}|\Big\} \leq m - (D^4\beta\varrho n + 16D\varepsilon m). \end{equation}
We assume that the maximum is attained by the second term, that is to say that we have used more vertices in $B$ than in $A$. Let $j^\star=\argmax \limits_{j\in\{1,2\}}|T_{\sigma(\ell),j}|$ and then we may take sets
\begin{itemize}
 \item $X_{\sigma(\ell),j^\star}\subseteq A$ with  $|X_{\sigma(\ell),j^\star}| = |T_{\sigma(\ell),j^\star}| + 16D\varepsilon m$, and
 \item $X_{\sigma(\ell),3-j^\star}\subseteq B$ with  $|X_{\sigma(\ell),3-j^\star}|=|T_{\sigma(\ell),3-j^\star}| + 16D\varepsilon m.$
\end{itemize}
\noindent disjoints from the previously chosen sets. Note that we have assigned the larger colour class of $T_{\sigma(\ell)}$ to the less occupied cluster in $\{A,B\}$. Furthermore, since~\eqref{balance} holds at step $\ell-1$ and as $|T_{\sigma(\ell)}|\le D^4\beta \varrho n$, the assignment of $T_{\sigma(\ell)}$ implies that~\eqref{balance} holds at step $\ell$. So suppose that~\eqref{ratata} does not hold at step $\ell-1$ for any $AB\in\mathcal M$. Then we have 
\begin{equation*}\sum_{i=\ell_0+1}^{\ell-1}|T_{\sigma(i)}| \ge |V(\mathcal{M})|m - t\cdot 32D\varepsilon m - k\cdot (3D^4\beta \varrho n +32D\varepsilon m)\ge |V(\mathcal M)|m-\frac\delta{16}km\end{equation*}

\noindent that together with \eqref{charizard} yields
\begin{eqnarray*}
		\sum_{i=1} ^{\ell-1}|T_{\sigma(i)}|  \geq  \Big(1-\frac\delta{16}\Big)Qm +|V(\mathcal M)|m-\frac{\delta}{16}km
						& \geq & \Big(1-\frac\delta{16}\Big)\Big(\varrho+\frac{\delta}4\Big)km-\frac{\delta}{16}km\\	&\ge&\Big(\varrho+\frac\delta8\Big)km\\
						&\ge& \Big(\varrho+\frac\delta{16}\Big)n,
	\end{eqnarray*}

\noindent which is impossible since $|T|=\varrho n$. This implies that we can make the assignation for each $\ell\in[t]$.\\

\noindent\textbf{Stage 2 (Cleaning):} Assume that the cluster tree is ordered according to a BFS starting from the subtree which contains the root of $T$. Starting with a leaf of the cluster tree, suppose that we have found the sets $Y_{i,j}$ satisfying properties $(1)-(5)$ for all subtrees $T_i$ below $T_\ell$ in the order of the cluster tree. Let
\[W_\ell:=\{v\in X: d(v,Y_{i,2})\geq D+1\text{ for all } i \text{ such that $T_i$ is a child of $T_\ell$}\}.\]

\noindent We want to prove that $W_\ell$ has a reasonable size. Given a child $T_i$ of $T_\ell$ in the cluster tree, we have that
\[|Y_{i,2}|\geq |T_{i,j}|+13D\varepsilon m\geq (D+1)\varepsilon m\]

\noindent and therefore, since $(X,V_{i,2})$ is $(\varepsilon,p)$-regular, by Lemma~\ref{lem:regularpairs} there are at most $(D+1)\varepsilon m$ vertices in $X$ with less than $D+1$ neighbours in $Y_{i,2}$. Since the auxiliary tree has maximum degree $D^4$, then $W_\ell$ has at least
\[ |X| - (D+1)D^4\varepsilon |X| \geq \dfrac{m}{2}\]

\noindent vertices. Now, since $(X,V_{\ell,2})$ is $(\varepsilon,p)$-regular, then by Lemma~\ref{lem:regularpairs} the pair $(W_\ell,V_{\ell,2})$ is $(2\varepsilon,p)$-regular with $p$-density at least $\delta/(100)- \varepsilon$. By Lemma~\ref{lem:regularpairs} there are at most $2\varepsilon m$ vertices of $V_{ \ell,2}$ with less than
\[\left(\dfrac{\delta}{100} -3\varepsilon \right)p|W_\ell| \geq \dfrac{\delta}{200}pm\]

\noindent neighbours in $W_\ell$. We remove these vertices from $X_{\ell,2}$ to obtain a subset $X'_{\ell,2}\subset X_{\ell,2}$ such that every vertex in $X'_{\ell,2}$ has at least $\delta pm/200$ neighbours in $W_\ell$. Now, we need to find an expander subgraph of $(X_{\ell,1},X'_{\ell,2})$. 
Since $(V_{\ell,1},V_{\ell,2})$ is $(\eps,p)$-regular with $d_p(V_{\ell,1},V_{\ell,2})\ge \delta/100$ and
\[|X_{\ell,1}|,|X'_{\ell,2}|\geq 16D\varepsilon m - 2\varepsilon m \geq (4D+6)\varepsilon m,\]
we use Lemma~\ref{pikachu} to  obtain a pair $(Y_{\ell,1},Y_{\ell,2})$, with $Y_{\ell,1}\subseteq X_{\ell,1}$ and $Y_{\ell,2}\subseteq X_{\ell,2}'$, such that $G'[Y_{\ell,1},Y_{\ell,2}]$ is bipartite $(\varepsilon m,2D+2)$-expander and  satisfies $|Y_{\ell,j}|\geq |X_{\ell,j}| - 3\varepsilon m\ge |T_{\ell,j}|+13D\eps m$ for $j\in\{1,2\}$.
\end{proof}
\section{Proof of Theorem \ref{stab}}\label{sec:stab}

The proof of Theorem \ref{stab} follows from the following stability result.

\begin{theorem}
\label{finalstab}
For every $r,D\ge 2$ there exist $\delta,C,C'>0$ such that if $N\geq (1-\delta)rn$ and $p\geq C'N^{-2/(r+2)}$, then $G=G(N,p)$ with high probability has the following property. For every blue-red colouring of $E(G)$, at least one of the following holds:
\begin{enumerate}[a)]
    \item\label{stab:blue clique} $G$ contains a blue copy of $K_{r+1}$.
    \item\label{stab:red trees} $G$ contains a red copy of every $T\in \mathcal{T}(n,D)$.
    \item\label{stab:partite} There exists a partition $V(G)=V_0\cup V_1\cup\dots\cup V_r$, with $|V_0|\leq C/p$ and $|V_i|\leq n+C/p$ for each $i\in [r]$. Moreover,  all edges of $G[V_i,V_j]$ are coloured in blue for each $1\leq i<j\leq r$.
\end{enumerate}
\end{theorem}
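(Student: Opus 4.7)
The plan is to assume that options (a) and (b) of Theorem~\ref{finalstab} fail and deduce the partition in (c). Write $G=G_B\cup G_R$. Since $G_R$ is not $\mathcal T(n,D)$-universal and $n/N=1/r+O(\delta)$, Theorem~\ref{resilience} (with $\varrho=n/N$) forces $e(G_R)\le (1/r+\delta')e(G)$, whence $e(G_B)\ge ((r-1)/r-2\delta')e(G)$. As $G_B$ is $K_{r+1}$-free, Theorem~\ref{erdsim} produces a partition $V(G)=W_1\cup\dots\cup W_r$ with $\sum_i e_B(W_i)\le\eps pN^2$, so most edges inside each $W_i$ are red. Using the weak expansion of $G(N,p)$ provided by the second part of Lemma~\ref{p2} and a removal argument in the spirit of Proposition~\ref{indhaxxel}, I would peel from each $W_i$ an exceptional set of size $O(1/p)$ to obtain $V_i\subseteq W_i$ with $G_R[V_i]$ a $(c/p,c'N,D)$-expander. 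Theorem~\ref{haxxel} then forces $|V_i|\le(1+\alpha)n$, for otherwise $G_R[V_i]$ would already be $\mathcal T(n,D)$-universal, contradicting~(b); combined with $\sum_i |V_i|\ge (1-o(1))N$ this yields $|V_i|=(1\pm\alpha)n$ for every $i$, and $|V_0|\le\alpha N$ where $V_0=V\setminus\bigcup_i V_i$.

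In the second phase I would refine this partition by removing to $V_0$ all vertices of some $V_i$ with more than $\sqrt{\eps}\,pN$ blue neighbours inside $V_i$, or with fewer than a constant times $pN$ neighbours in some $V_j$. Call the new partition $V'_0\cup V'_1\cup\dots\cup V'_r$; the first part of Lemma~\ref{p2} caps the number of vertices removed for the second reason at $O(1/p)$, and pseudo-randomness controls the first. If a red edge $v_1v_2$ with $v_1\in V'_i$ and $v_2\in V'_j$ were to remain for $i\ne j$, then adjoining $v_2$ to the red expander on $V'_i$ would yield an expander on $|V'_i|+1>n$ vertices in which Theorem~\ref{haxxel} produces every $T\in\mathcal T(n,D)$, contradicting~(b); hence $e_R(V'_i,V'_j)=0$ for $i\ne j$. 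Moreover, any $v\in V'_0$ with $\Omega(pN)$ blue neighbours in every $V'_j$ would, by Lemma~\ref{p3}, span a blue $K_{r+1}$, contradicting~(a); so every $v\in V'_0$ must be deficient with respect to at least one part.

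A counting argument based on the first part of Lemma~\ref{p2} then shows that for all but $O(1/p)$ vertices $v\in V'_0$ there is a \emph{unique} $i=i(v)\in[r]$ with $d_B(v,V'_i)=o(pN)$ and $d_R(v,V'_i)=\Omega(pN)$. Moving each such $v$ to $V'_{i(v)}$ and iterating (returning any residual red cross-edges back to $V'_0$ in later rounds) yields a partition $V=U_0\cup U_1\cup\dots\cup U_r$ with $|U_0|=O(1/p)$, $e_R(U_i,U_j)=0$ for $i\ne j$, $\Delta(G_B[U_i])=o(pN)$ and $\delta(G_R[U_i])=\Omega(pN)$. It only remains to check $|U_i|\le n+C/p$: if some $|U_i|>n+C/p$, the red expander $G_R[U_i]$ would, by Theorem~\ref{mont} (for trees with at most $n/\log^4 n$ leaves) and Theorem~\ref{manyleaves} (for trees with more leaves), be $\mathcal T(n,D)$-universal, once again contradicting~(b).

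The main obstacle is precisely this last step for trees with many leaves. The expander $G_R[U_i]$ has size only $n+\Omega(1/p)$, so after embedding $T$ minus its leaves the image of the parents could have $\omega(1/p)$ non-neighbours among the leftover vertices, while only $O(1/p)$ spare vertices remain, obstructing any direct Hall step. Theorem~\ref{manyleaves} circumvents this by fixing in advance a random reservoir $R\subseteq U_i$ onto which the parents of the leaves are forced to land, so that no large subset of $R$ has too many non-neighbours in $G_R[U_i]$. The delicate part of the final step is verifying that $G_R[U_i]$ inherits the dual weak $(m_1,n/32D)$- and $(m_2,m_2)$-expansion hypotheses of Theorem~\ref{manyleaves} for $m_1=\Theta(1/p)$ and $m_2=\Theta(n/\log^4 n)$; here the lower bound $p\ge C'N^{-2/(r+2)}$ and the full pseudo-randomness of $G(N,p)$ (Lemmas~\ref{etap},~\ref{p2}, and~\ref{p3}) are used most sharply.
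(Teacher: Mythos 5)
Your overall route is the paper's route: rule out (a) and (b), use Theorem~\ref{resilience} plus Theorem~\ref{erdsim} to get an almost $r$-partite blue graph, extract red expanders of size $(1\pm\alpha)n$ inside each part, refine by discarding vertices with large blue degree inside their part or small degree to some part, relocate all but $O(1/p)$ vertices of the exceptional set, and finish by showing the parts cannot exceed $n+C/p$ via Theorem~\ref{mont} and Theorem~\ref{manyleaves}. This matches Proposition~\ref{jolteon}, the notion of $\varepsilon$-good partition, Proposition~\ref{umbreon}, and the maximality argument in the paper.

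However, your mechanism for proving $e_R(V'_i,V'_j)=0$ does not work as stated. You propose to adjoin the endpoint $v_2\in V'_j$ of a red cross-edge to the red expander on $V'_i$ and invoke Theorem~\ref{haxxel} on ``an expander on $|V'_i|+1>n$ vertices.'' Two things fail. First, at this stage $|V'_i|$ is only guaranteed to be about $(1-1/2D)N/r$, which can be well below $n$, so $V'_i\cup\{v_2\}$ need not even have the $n+1$ vertices required to host a tree with $n$ edges; the inequality $|V'_i|+1>n$ is unjustified. Second, adding a single vertex to a Haxell expander does not yield a Haxell expander: the refinement only controls the \emph{total} degree of $v_2$ into $V_i$, so $v_2$ may have exactly one red neighbour in $V'_i$ (namely $v_1$), and then the singleton $\{v_2\}$ violates condition (1) of Theorem~\ref{haxxel}. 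The paper avoids both problems by splitting $T$ at a cut edge $u_1u_2$ into subtrees $T_1,T_2$ each with between $n/D$ and $(1-1/D)n$ vertices, embedding $T_\ell$ into $G_R[V'_\ell]$ with $u_\ell\mapsto v_\ell$ (each part is large enough for a tree on $(1-1/D)n$ vertices), and joining the two embeddings across the red edge $v_1v_2$. Relatedly, in the last step you attribute the weak $(m_2,m_2)$-expansion of $G_R[U_i]$ needed for Theorem~\ref{manyleaves} to ``pseudo-randomness of $G(N,p)$,'' but pseudo-randomness only guarantees edges between two large sets, not \emph{red} edges; the actual argument is that two disjoint subsets of $U_i$ of size $\Theta(N/\log^4N)$ with no red edge between them, together with the other $r-1$ parts, would span a canonical blue $K_{r+1}$ by Janson's inequality (Lemma~\ref{Janson}), contradicting (a). You flag this verification as delicate but do not supply this idea, nor the codegree argument (Lemma~\ref{incexc}) needed to upgrade the minimum red degree $\delta(G_R[U_i])=\Omega(pN)$ to the expansion of small sets.
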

Note that Theorem~\ref{finalstab} implies Theorem~\ref{stab}, as \eqref{stab:partite} cannot occur if $N> rn + (r+1)C/p$. Before proving Theorem~\ref{finalstab}, we will provide a rough structure of the colourings of a typical outcome of $G(n,p)$ by combining Theorems~\ref{resilience} and~\ref{erdsim}. 
\begin{prop}
\label{jolteon}
For every $\alpha,\eps>0$ and integers $r,D\ge 2$, there exist $C',\delta>0$ such that if $N\geq (1-\delta)rn$ and $p\geq C'N^{-2/(r+2)}$, then  $G=G(N,p)$ has, with high probability, the following property. For every blue-red colouring of $E(G)$, at least one of the following holds:
\begin{enumerate}[a)]
    \item $G$ contains a blue copy of $K_{r+1}$.
    \item $G$ contains a red copy of any $T\in \mathcal{T}(n,D)$.
    \item There exists a partition $V(G)=V_0\cup V_1\cup\dots\cup V_r$ such that $|V_0|\leq \alpha n$ and for each $i\in [r]$ we have $||V_i|- n|\leq \alpha n$ and $e_B(V_i)\leq \varepsilon pN^2$.
\end{enumerate}
\end{prop}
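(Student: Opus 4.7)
The plan is to follow the outline sketched in Section~\ref{sec:outline}: combine Theorems~\ref{resilience} and~\ref{erdsim} to extract an almost $r$-partite partition of the blue graph, and then apply Theorem~\ref{Resilience} \emph{locally} inside each part to control its size. By Lemma~\ref{etap}, I may assume that $G = G(N,p)$ is $(\eta,p)$-uniform for $\eta$ as small as I need. Fix a colouring in which (a) and (b) fail, so $G_B$ is $K_{r+1}$-free while $G_R$ avoids some $T \in \mathcal{T}(n,D)$, and let $\alpha' = \alpha/(2r)$, $\eps_0 \ll \eps$ and $\delta \ll \alpha/r$ be auxiliary constants to be fixed at the end.

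Set $\varrho = 1/(r(1-\delta))$, so that $\varrho N \geq n$ by the hypothesis $N \geq (1-\delta)rn$. Every $T \in \mathcal{T}(n,D)$ can be extended to a tree in $\mathcal{T}(\varrho N, D)$ (attach paths at a vertex of degree less than $D$, which exists since $D \geq 2$), so any $\mathcal{T}(\varrho N, D)$-universal graph is also $\mathcal{T}(n,D)$-universal. Hence Theorem~\ref{resilience} applied to $G_R \subseteq G$ with slack $\delta/10$ gives $e(G_R) < (\varrho + \delta/10)e(G)$, and combined with $e(G) \geq (1-\eta)p\binom{N}{2}$ this yields $e(G_B) \geq (1 - 1/r - \delta_1)p\binom{N}{2}$ for some $\delta_1 = O(\delta + \eta)$. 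Since $G_B$ is $K_{r+1}$-free, Theorem~\ref{erdsim} (with input tolerance $\eps_0$, whose output $\delta$-parameter exceeds $\delta_1$) produces an $r$-partition $V(G) = W_1 \cup \cdots \cup W_r$ with $\sum_i e_B(W_i) \leq \eps_0 p N^2$; in particular $e_B(W_i) \leq \eps p N^2$ for each $i$.

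To control the sizes, I first show $|W_i| \leq (1+\alpha')n$ for every $i$. Suppose $|W_i| \geq (1+\alpha')n$ for some $i$. The $(\eta,p)$-uniformity of $G$ transfers to $G[W_i]$ (with a mildly worse constant, since $|W_i| \geq n$ is a linear fraction of $N$), giving $e(G[W_i]) \geq (1-\eta)p\binom{|W_i|}{2} \gtrsim p n^2$. As $e_B(W_i) \leq \eps_0 p N^2 = O(\eps_0)\, e(G[W_i])$, the red subgraph satisfies $e(G_R[W_i]) \geq (1 - O(\eps_0))\, e(G[W_i])$. Applying Theorem~\ref{Resilience} to the host $G[W_i]$ with parameter $\varrho_i := n/|W_i| \leq 1/(1+\alpha')$ and slack $\alpha'/4$ (valid once $\eps_0$ is small enough) shows that $G_R[W_i]$ is $\mathcal{T}(n,D)$-universal, contradicting (b). The matching lower bound then follows from $\sum_j |W_j| = N \geq (1-\delta)rn$:
\[|W_i| \geq N - (r-1)(1+\alpha')n \geq \big(1 - \delta r - (r-1)\alpha'\big)n \geq (1-\alpha)n,\]
by the choice of $\alpha'$ and $\delta$. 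Setting $V_i := W_i$ and $V_0 := \emptyset$ gives the partition required in (c).

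The main delicacy is this final balance of parameters. The lower bound on $|W_i|$ is obtained by subtracting $r-1$ upper bounds from $N \geq (1-\delta)rn$, so the upper bound coming from Theorem~\ref{Resilience} has to be tightened to $(1+\alpha/(2r))n$ rather than merely $(1+\alpha)n$, and $\delta$ must be driven down to order $\alpha/r$ to absorb the deficit $N - rn$. Simultaneously, $\eps_0$ must be chosen small enough that the residual blue edges inside each $W_i$ do not consume the resilience margin of Theorem~\ref{Resilience}.
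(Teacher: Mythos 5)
Your overall architecture matches the paper's first two steps exactly (global resilience to force $e(G_B)\ge(1-1/r-\delta_1)p\binom N2$, then the sparse Erd\H{o}s--Simonovits theorem to get the near-$r$-partition), but your size-control step is genuinely different. The paper does \emph{not} reapply the resilience theorem inside each part: it uses the $(\eta,p)$-uniformity and the bound $e_B(W_i)\le\eps pN^2$ to extract, via Proposition~\ref{indhaxxel}, an almost-spanning subset $V_i\subseteq W_i$ on which $G_R[V_i]$ is an $(\eta N,\eta N,D)$-expander, and then invokes Haxell's theorem (Theorem~\ref{haxxel}) to conclude that the \emph{largest} part cannot exceed $n+3D\eta N$; the discarded vertices go into $V_0$. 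Your route instead applies the deterministic Theorem~\ref{Resilience} locally to the induced host $G[W_i]$, which lets you take $V_i=W_i$ and $V_0=\emptyset$. This is a legitimate and arguably cleaner alternative (it avoids Haxell's theorem entirely and reuses machinery you already need), at the cost of having to verify that uniformity and the edge-count margin transfer to $G[W_i]$.

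Two points in that transfer need tightening. First, you assert that $|W_i|\ge n$ is ``a linear fraction of $N$'', but the hypotheses only give $n\le N/((1-\delta)r)$, not a lower bound on $n/N$; the fix is to run your contradiction argument first on the largest part, which automatically satisfies $|W_1|\ge N/r$, deduce $n\ge |W_1|/(1+\alpha')\ge N/(r(1+\alpha'))$, and only then treat the remaining parts (this is essentially how the paper's proof extracts $n=\Omega(N)$ via $|V_1|\ge N/r-\eta N$). Second, you invoke Theorem~\ref{Resilience} with the $n$-dependent parameter $\varrho_i=n/|W_i|$, but its constants $n_0,\eta_0,C$ depend on $\varrho$, so the quantifiers are in the wrong order; instead fix a single $\varrho^*=1/(1+\alpha'/2)$, note that $\varrho^*|W_i|\ge n$ whenever $|W_i|\ge(1+\alpha')n$, and check $e(G_R[W_i])\ge(\varrho^*+\alpha'/8)e(G[W_i])$ using $e(G[W_i])=\Omega(pN^2/r^2)$ and $\eps_0\ll\alpha'/r^2$. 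With these repairs your argument goes through.
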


\begin{proof}
Without loss of generality, we may ask that $\varepsilon$ is small enough for calculations. Let $C'$ and $\delta'$ be the numerical outputs from Theorem~\ref{erdsim} with inputs $\eps$ and $r$. Let $\delta = \alpha/(2r^2)$, $\varrho = 1/r+2\delta$, $N\geq (1-\delta)rn$ and $p\geq C'N^{-2/(r+2)}$. Since $p\gg 1/N$, Theorem~\ref{resilience} implies that, with high probability, if $e(G_R)\geq (\varrho + \delta')e(G)$ then $G_R$ contains all trees with maximum degree $D$ and $\varrho N\geq n$ edges, and thus we may assume that
\[e(G_B)\geq\left(1-\dfrac{1}{r}-\delta'\right)e(G).\]
Theorem~\ref{erdsim} implies that, with high probability, all $K_{r+1}$-free subgraphs of $G$ with this many edges are $\varepsilon pN^2$-close to being $r$-partite. Therefore, we may assume that there exists a partition $V(G)=W_1\cup\dots\cup W_r$ such that $e_B(W_i)\leq \varepsilon pN^2$ for each $i\in [r]$. Since $p\gg 1/N$, we may also rule out the event in which $G$ is not $(\eta,p)$-uniform for some $0<\eta \ll \alpha$.
\begin{claim}\label{claim:stab:expander}
 In the events considered above, for each $i\in [r]$ the following holds. If $|W_i|\geq N/2r$, then there exists $V_i\subseteq W_i$, with $|W_i\setminus V_i|\leq \eta N$, such that $G_R[V_i]$ is a $(\eta N,\eta N,D)$-expander.
\end{claim}
\begin{proof}[Proof of Claim~\ref{claim:stab:expander}]
We prove first that $G_R[W_i]$ is a weak $(\eta N,\eta N)$-expander. Since $G$ is $(\eta ,p)$-uniform, then for every pair of disjoint sets $X,Y\subseteq V(G)$, with $|X|,|Y|\geq \eta N$, we have
\begin{equation*}
    e_R(X,Y)=e(X,Y)-e_B(X,Y) \geq \dfrac{p}{2}|X||Y| - \varepsilon pN^2 >0,
\end{equation*}
as long as $2\varepsilon < \eta^2 $. Since $|W_i|\geq (D+3)\eta N$, provided $\eta$ is small enough, we may apply Proposition~\ref{goodhaxxel} to find a set $V_i\subseteq W_i$, with $|W_i\setminus V_i|\leq \eta N$, such that $G_R[V_i]$ is an $(\eta N,\eta N,D)$-expander.
\end{proof}
For each $i\in[r]$ such that $|W_i|\geq N/2r$, by Claim~\ref{claim:stab:expander} we know that $G_R[V_i]$ is an $(\eta N,\eta N,D)$~-expander and then for all $X\subseteq V_i$, with $\eta N\leq |X|\leq 2\eta N$, we have
 \begin{equation*}
       |N_R(X)\cap V_i| \geq |V_i|-\eta N-|X|+1\geq (|V_i|-3D\eta N)+D|X|+1.
 \end{equation*}
Suppose that $V_1$ is the largest of the $V_i$'s and note that $|W_1|\geq |V_1|\geq N/r-\eta N\geq N/2r$. Therefore, if $G_R[V_1]$ is not $\mathcal{T}(n,D)$-universal, then Theorem~\ref{haxxel} implies that $|V_i|\leq |V_1|\leq n + 3D\eta N$ for all $i\in[r]$. Set $V_0=V(G)\setminus(V_1\cup \dots\cup V_r)$ and choose $\eta$ small enough so that
\[|V_0|\leq \dfrac{\alpha n}{2r} \quad \text{and} \quad |V_i|\leq \left(1+\dfrac{\alpha}{r}\right)n\]
for each $i\in[r]$. To finish the proof we only need to show that $|V_i|\geq (1-\alpha) n$ for each $i\in [r]$. We suppose without loss of generality that $|V_r|<(1-\alpha)n$. Then there exists $j\in[r-1]$ such that
\[|V_j|\geq \dfrac{N-|V_r|-|V_0|}{r-1}> \dfrac{1}{r-1}\left((1-\delta)rn - (1-\alpha)n-\dfrac{\alpha n}{2r}\right)\geq \left(1+\dfrac{\alpha}{r}\right)n,\]
which is a contradiction and thus $||V_i|-n|\leq \alpha n$ for all $i\in[r]$.
\end{proof}
Now we push the stability even further. It will be convenient to relate expansion properties of the red graphs on each part based solely on the red and blue degrees inside this part. We prove that if a set induces a graph with high minimum red degree and roughly the expected codegree, then it satisfies property~\eqref{strong} of expansion.
\begin{lemma}
\label{incexc}
  For every $C,\gamma>0$  there exists $\gamma'>0$ such that the following holds for $p\gg \log N/N$. Let $G$ be an $N$-vertex graph such that for all $u,v\in V(G)$ we have $d(u)\geq \gamma pN$ and $|N(u)\cap N(v)|\leq 2p^2N\log N$. Then for every $X\subseteq V(G)$, with $1\le |X|\leq C/p$, we have $|N(X)|\geq {\gamma' pN}|X|/\log N$.
\end{lemma}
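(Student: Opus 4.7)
The plan is to bound $|\Gamma(X)|$ from below via Cauchy--Schwarz, using the minimum-degree hypothesis for the first moment and the codegree hypothesis for the second moment. Let $X\subseteq V(G)$ with $k := |X| \le C/p$, set $Y = \Gamma(X) = \bigcup_{x\in X} N(x)$, and for each $y\in V(G)$ write $d_X(y) := |N(y)\cap X|$; note that $d_X(y)=0$ whenever $y\notin Y$.

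The first and second moments of $d_X$ are controlled directly from the hypotheses:
\[
\sum_{y\in Y} d_X(y) \;=\; \sum_{x\in X} d(x) \;\ge\; k\gamma pN,
\]
and
\[
\sum_{y\in Y}\binom{d_X(y)}{2} \;=\; \sum_{\{x,x'\}\subseteq X}|N(x)\cap N(x')| \;\le\; \binom{k}{2}\cdot 2p^2 N\log N,
\]
so that $\sum_y d_X(y)^2 \le 2k^2 p^2 N\log N + \sum_y d_X(y)$. Cauchy--Schwarz gives $|Y|\ge \bigl(\sum_y d_X(y)\bigr)^2/\sum_y d_X(y)^2$, and I would then split into two cases according to which term dominates the denominator. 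If $\sum_y d_X(y) \ge 2k^2 p^2 N\log N$, the bound becomes $|Y|\ge \tfrac12 k\gamma pN$, which is far stronger than needed. Otherwise, $|Y|\ge (k\gamma pN)^2/(4k^2 p^2 N\log N) = \gamma^2 N/(4\log N)$; and since $kp\le C$, this rearranges to $|Y|\ge (\gamma^2/(4C))\cdot kpN/\log N$. Finally $|N(X)| \ge |Y|-k$, and the assumption $p\gg \log N/N$ makes $k = o(kpN/\log N)$, so the correction is absorbed into a slightly smaller constant, for instance $\gamma' = \gamma^2/(8C)$.

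The key point worth emphasising is that when $k$ is close to $C/p$, the naive inclusion--exclusion bound $|\Gamma(X)| \ge \sum_{x\in X} d(x) - \sum_{\{x,x'\}} |N(x)\cap N(x')|$ becomes trivial (or even negative), since the two sums are of the same order of magnitude. Cauchy--Schwarz circumvents this by effectively dividing by the average value of $d_X(y)$ on $Y$ rather than subtracting; the codegree hypothesis forces this average to be $O(\log N)$, and that is precisely where the $1/\log N$ factor in the conclusion comes from.
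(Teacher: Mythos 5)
Your proof is correct. The moment computations are right: $\sum_y d_X(y)=\sum_{x\in X}d(x)\ge \gamma pN|X|$, $\sum_y\binom{d_X(y)}{2}=\sum_{\{x,x'\}\subseteq X}|N(x)\cap N(x')|\le \binom{|X|}{2}\cdot 2p^2N\log N$, the case split after Cauchy--Schwarz is handled properly, and the passage from $|\Gamma(X)|$ to $|N(X)|$ correctly uses $p\gg \log N/N$ to absorb the $-|X|$ term. The paper reaches the same conclusion by a closely related but technically different device: instead of applying Cauchy--Schwarz to all of $X$, it passes to a subset $X'\subseteq X$ of size at most $\gamma/(4p\log N)$, chosen exactly so that the second-order term $|X'|^2\cdot 2p^2N\log N$ in the truncated inclusion--exclusion bound is at most half of the first-order term $\gamma pN|X'|$, and then uses $|N(X)|\ge |N(X')|$ together with $p|X|\le C$ to convert the resulting bound $\Omega(\gamma^2 N/\log N)$ into the stated form. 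Both arguments extract the same quantitative content from the same two hypotheses and yield constants of the same order ($\gamma'\sim \gamma^2/C$); the subset trick avoids any case analysis at the cost of choosing the truncation threshold by hand, while your Cauchy--Schwarz version is parameter-free and makes transparent, as you note, that the $1/\log N$ loss is exactly the average multiplicity $d_X(y)$ forced by the codegree bound.
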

\begin{proof}
For $X\subseteq V(G)$ with $1\le |X|\leq C/p$, take a subset $X'\subseteq X$ with $1\le |X'|\leq \gamma /(4p\log N)$. By inclusion-exclusion, $|N(X)|$ is at least
\begin{equation*}
\begin{split}
 \sum_{u\in X'}|N(u)| -
  \sum_{v\not=w}|N(v)\cap N(w)| - |X| 
  &\geq \gamma pN|X'| -|X'|^ 2\cdot(2p^2N\log N) - |X| \\
  &\geq  \gamma pN|X'| -\dfrac{\gamma pN}{2}|X'| - |X|\\
  &\geq  \Omega \left(\dfrac{pN}{\log N}\right)|X|,
\end{split}
\end{equation*}
 where in the last inequality we used that $pN\gg \log N$.
\end{proof}

\begin{definition}
Let $\eps>0$ and let $r,D\geq 2$ be integers. For a blue-red colored $N$-vertex graph $G$, we say that a partition $V(G)=V_0\cup V_1\cup\dots\cup V_r$ is $\varepsilon$-good if for every $i\in [r]$
\begin{enumerate}[a)]
  \item\label{good:a} $|V_i|\geq (1-1/2D)N/r$,
  \item\label{good:b} $d_R(v,V_i)\geq pN/32r$ for every $v\in V_i$, and 
  \item\label{good:c} $d_B(v,V_i)\leq \varepsilon pN$  for every $v\in V_i$.
\end{enumerate}
\end{definition}
We will prove now that for any $\varepsilon$-good partition of $V(G(N,p))$ we have that $e_R(V_i,V_j)=0$ for all $1\leq i<j\leq r$.  First, we prove that $G_R[V_i]$ is an expander for each $i\in [r]$. Thus, by Haxell's theorem (Theorem~\ref{haxxel}), we can embed any tree of size $(1-o(1))n$ into any of the $V_i$'s. Suppose there is a red edge between $V_i$ and $V_j$.  We may split any given tree $T\in \mathcal{T}(n,D)$ in two trees $T_1$ and $T_2$, connected by an edge and both having at most $(1-1/D)n$ vertices. Then, we can embed $T_1$ into $V_i$ and $T_2$ into $V_j$, and complete the embedding of $T$ using the red edge between $V_i$ and $V_j$. 

Using this fact we can prove that $G[V_i]$ has even stronger expansion properties. That is, for each $i\in [r]$ we may show that every pair of large disjoint subsets of $V_i$ always have at least one red edge in between. Indeed, if  for some $i\in [r]$ there exist a pair of disjoint sets $X,Y\subseteq V_i$ each of size $\Theta(N/\log^4 N)$ and no red edges in between, then, with high probability, $X$ and $Y$ and the remaining $V_j$'s would span a canonical blue-copy of $K_{r+1}$. Combining  this information with results of Section~\ref{sec:expanders}, we show that $G_R[V_i]$ is $\mathcal{T}(|V_i|-C/p,D)$-universal for every $i\in [r]$.

\begin{prop}
  \label{umbreon}
  For integers $r,D\geq 2$ there exist $C,C',\delta,\varepsilon>0$ such that if $N\geq (1-\delta)rn$ and $p\geq C'N^{-2/r+2}$, then $G=G(N,p)$ has, with high probability, the following property. For every blue-red colouring of $E(G)$ that admits an $\varepsilon$-good partition $V(G)=V_0 \cup V_1\cup \dots \cup V_r$, at least one of the following holds:
  \begin{enumerate}[a)]
      \item\label{umbreon:1} $G$ contains a blue copy of $K_{r+1}$.
      \item\label{umbreon:2} $G$ contains a red copy $T\in \mathcal{T}(n,D)$.
      \item\label{umbreon:3} For every $1\leq i<j\leq r$ we have $e_R(V_i,V_j)=0$. Moreover, for each $i\in[r]$ the graph $G_R[V_i]$ is $\mathcal{T}(|V_i|-C/p,D)$-universal.
  \end{enumerate}
\end{prop}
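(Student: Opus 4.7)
The plan is to establish the trichotomy in three conceptual steps: set up weak expansion inside each $G_R[V_i]$, use it to rule out red cross-edges, and then strengthen the expansion enough to feed the embedding tools of Section~\ref{sec:expanders}. Throughout, I would condition on the high-probability events that $G=G(N,p)$ is $(\eta,p)$-uniform (Lemma~\ref{etap}) for some $\eta\ll\varepsilon$, has all codegrees bounded by $2p^2N\log N$, is a weak $(c/p,c'N)$-expander (Lemma~\ref{p2}), and satisfies the property of Lemma~\ref{p3}; each of these holds with high probability once $p\ge C'N^{-2/(r+2)}$. Setting $m_1:=C_1/p$, the codegree bound together with the minimum red degree condition of the $\varepsilon$-good partition feeds into Lemma~\ref{incexc} and yields $|N_R(X)\cap V_i|\ge \gamma' p N|X|/\log N$ for every $X\subseteq V_i$ with $|X|\le m_1$. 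Moreover, upper-uniformity together with the small blue-degree condition gives that any disjoint $X,Y\subseteq V_i$ with $|X|=m_1$ and $|Y|=|V_i|/(32D)$ have $\Omega(N/p)$ edges in $G$ and at most $\varepsilon C_1 N$ blue ones, so for $\varepsilon$ small $G_R[V_i]$ is a weak $(m_1,|V_i|/(32D))$-expander.

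Suppose now that a red edge $uv$ has $u\in V_i$, $v\in V_j$ for some $i\neq j$. Applying Proposition~\ref{indhaxxel} to $G_R[V_i]$ and $G_R[V_j]$ produces subsets $V_i'',V_j''$ missing at most $m_1$ vertices on which the induced red graphs are $(m_1,|V_k|/(32D),D)$-expanders, so Theorem~\ref{haxxel} embeds any tree on up to $|V_k|(1-1/(32D))-O(1/p)$ vertices with any prescribed root. A centroid argument applied to the given $T\in\mathcal T(n,D)$ (using $\Delta(T)\le D$, so the centroid has at most $D$ subtrees of sizes summing to $n$) yields an edge whose removal splits $T$ into two subtrees, each of order at most $(D-1)(n+1)/D+1$, both of which fit into $V_i$ and $V_j$ provided $\delta$ is chosen small enough in terms of $D$. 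Embedding one half into $G_R[V_i'']$ with root mapped to $u$, the other into $G_R[V_j'']$ with root mapped to $v$, and gluing along $uv$, yields a red copy of $T$ and conclusion~\ref{umbreon:2}.

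Assuming from now on that no red edges lie between different parts, the key step is to strengthen the expansion of each $G_R[V_i]$ to a weak $(m_2,m_2)$-expander with $m_2=\Theta(N/\log^4 N)$, where the constant is chosen so that the leaf-threshold of Theorem~\ref{manyleaves} (which is $24Dm_2$) falls below the leaf-threshold of Theorem~\ref{mont} ($|V_i|/d$ with $d=D\log^4|V_i|/20$). Suppose otherwise: for some $i$ there exist disjoint $X,Y\subseteq V_i$ with $|X|=|Y|=m_2$ such that every edge between them is blue. Since every edge between $V_i$ and $V_j$ with $j\ne i$ is also blue, any copy of $K_{r+1}$ in $G$ using one vertex of $X$, one of $Y$, and one from each other $V_j$ is automatically blue. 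A Janson estimate analogous to Lemma~\ref{Janson} but for sets of differing sizes shows that the probability that any specific such tuple of sets fails to contain a canonical $K_{r+1}$ is at most $\exp\bigl(-\Omega(p^{\binom{r+1}{2}}m_2^2\prod_{j\ne i}|V_j|)\bigr)=\exp\bigl(-\Omega(N^{2(r+1)/(r+2)}/\mathrm{polylog}(N))\bigr)$; union-bounding over the $\binom{|V_i|}{m_2}^2\le\exp(O(N/\log^3 N))$ choices of $(X,Y)$ still remains $o(1)$ provided $p\ge C'N^{-2/(r+2)}$ with $C'$ large enough, contradicting the absence of a blue $K_{r+1}$.

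Finally, with $C=C_1$, fix $T\in\mathcal T(|V_i|-C/p,D)$. If $T$ has at least $24Dm_2$ leaves, Theorem~\ref{manyleaves} applied to $H=G_R[V_i]$ with parameters $(m_1,m_2)$ delivers the embedding, the auxiliary bounds $6m_1\log|V_i|<m_2$ and $16Dm_2\le |V_i|$ being routine consequences of $pN\gg \mathrm{polylog}(N)$. Otherwise I remove $C/p-1$ arbitrary vertices of $V_i$ to form $V_i'$ of size $|T|$; small-set expansion ($|X|\le m_1$) comes from Lemma~\ref{incexc}, while for $|X|$ with $m_1\le |X|\le |V_i'|/(2d)$ any size-$m_1$ subset of $X$ already has at most $|V_i|/(32D)$ red non-neighbors in $V_i$, so $|N_R(X)\cap V_i'|\ge |V_i|(1-1/(32D))-O(1/p)\ge d|X|$, and Theorem~\ref{mont} gives a spanning copy of $T$ in $G_R[V_i']$. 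The main obstacle is the strengthening in the third paragraph: pushing $m_2$ all the way down to $\Theta(N/\log^4 N)$ is exactly what forces the density threshold $p\ge C'N^{-2/(r+2)}$ appearing in the statement, and it is the smallest value of $m_2$ for which the leaf-split between Theorem~\ref{mont} and Theorem~\ref{manyleaves} covers all of $\mathcal T(|V_i|-C/p,D)$.
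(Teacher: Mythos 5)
Your overall architecture coincides with the paper's: small-set red expansion inside each $V_i$ from the codegree bound via Lemma~\ref{incexc}, weak expansion at scale $1/p$ from Lemma~\ref{p2}, a cut-edge splitting of $T$ plus Haxell's theorem to rule out red cross-edges, a Janson estimate at scale $N/\log^4N$ to upgrade $G_R[V_i]$ to a weak $(m_2,m_2)$-expander, and the dichotomy between Theorem~\ref{mont} and Theorem~\ref{manyleaves} to finish. Two of your justifications, however, do not hold as written. First, you derive the weak $(m_1,|V_i|/32D)$-expansion of $G_R[V_i]$ by claiming that uniformity gives $\Omega(N/p)$ edges of $G$ between a set $X$ of size $m_1=C_1/p$ and a linear-sized $Y$. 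Uniformity (Lemma~\ref{etap}) only controls pairs of sets of size at least $\eta N$, and $m_1\ll\eta N$ in the entire range of $p$ considered; moreover the count would be $p|X||Y|=\Theta(N)$, not $\Theta(N/p)$. The fact you need is exactly Lemma~\ref{p2}\eqref{lem:Gnp2}, which you conditioned on but then did not invoke here: $X$ has at most $c'N$ non-neighbours in $G$, and at most $\varepsilon pN|X|=\varepsilon C_1N$ vertices of $V_i$ can be joined to $X$ only in blue, so for suitable constants a red edge into $Y$ survives. This is how the paper's display~\eqref{alakazam} is obtained.

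Second, in the few-leaves case you delete $C/p-1$ arbitrary vertices of $V_i$ and assert that small-set expansion of the remainder still ``comes from Lemma~\ref{incexc}''. The guarantee of Lemma~\ref{incexc} is $|N_R(X)\cap V_i|\ge\gamma'pN|X|/\log N$, and after deleting an arbitrary set of size $C/p$ it drops by $C/p$; for $r=2$ one may have $p^2N=\Theta(1)$, in which case $C/p$ exceeds $\gamma'pN|X|/\log N$ for all $|X|=O(\log N)$ and the surviving lower bound is vacuous. The standard repair, which the paper tacitly uses, is to delete nothing: extend $T$ to a spanning tree of $V_i$ by hanging a bare path of length $C/p-1$ off a leaf (preserving $\Delta\le D$ and changing the leaf count by $O(1)$) and apply Theorem~\ref{mont} to all of $G_R[V_i]$. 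A smaller point: routing the cross-edge step through Proposition~\ref{indhaxxel} could a priori delete the endpoints $u,v$ you wish to prescribe; since Lemma~\ref{incexc} already shows the deletion set is empty, it is cleaner to verify Haxell's hypotheses on $G_R[V_i]$ directly, as the paper does. On the other hand, your observation that $m_2$ must be taken a factor $\Theta(D)$ below $|V_i|/(2d)$ so that the leaf-thresholds of Theorems~\ref{mont} and~\ref{manyleaves} overlap is correct and is more careful than the paper's own bookkeeping.
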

\begin{proof}Assume that neither \eqref{umbreon:1} nor \eqref{umbreon:2} hold.  For $\alpha=1/32D$, we take $C$ from Lemma~\ref{p2} so that, with high probability, $G$ is a weak $(C/p, \alpha N/4r)$-expander, and set $\varepsilon= \alpha/(6CD)$. Moreover, there exists a constant $C'$ such that if $p\geq C'N^{-1/2}$, then, with high probability, every pair of vertices in $G$ has at most $2p^2N \log N$ common neighbours. Finally, because of the first property of the $\varepsilon$-good partition, we deduce that $N\leq 2r|V_i|$. Our first goal is to prove that each $V_i$ satisfies the hypothesis of Theorem~\ref{haxxel} in order to show that $G_R[V_i]$ is $\mathcal T((1-1/D)n,D)$-universal. For $i\in[r]$, we apply Lemma~\ref{incexc} to $G_R[V_i]$, with parameters $\gamma=1/32r$ and $C$, so that for every $X\subseteq V_i$, with $1\le |X|\leq C/p$, we have 
  \begin{equation}
    \label{jynx}|N_R(X)\cap V_i|=\Omega\left(\dfrac{pN}{\log N}\right)|X|\geq D|X|+1.\end{equation}
For $X\subseteq V_i$, with $C/p\leq |X|\leq 2C/p$, since $G$ is a weak $(C/p,\alpha N/4r)$-expander we have
\begin{equation}
  \label{alakazam}
    |N_R(X)\cap V_i| \geq |V_i|-\dfrac{\alpha N}{4r} -\eps pN|X| - |X|
                    \geq \left(1-\alpha\right)|V_i|+D|X|+1.
    \end{equation}
Since $\alpha \leq 1/D$, then $(1-\alpha)|V_i|\geq (1-1/D)n$,
and thus we may use Theorem~\ref{haxxel} on each $G_R[V_i]$ in order to find trees of size $(1-1/D)n$ and maximum degree at most $D$.

Given a tree $T\in \mathcal{T}(n,D)$, there exists a cut edge $u_1u_2\in E(T)$ which splits $T$ into two trees $T_1$ and $T_2$, both with at least $n/D$ vertices and, consequently, at most $(1-1/D)n$ vertices (see~\cite[Lemma~2.5]{BPS3}). Suppose that exists a red edge $v_1v_2$ between two different parts, say $v_1\in V_1$ and $v_2\in V_2$.  By Theorem~\ref{haxxel}, we may find an embedding of $T_i$ in $G_R[V_i]$ that maps $u_i$ to $v_i$, for $i\in\{1,2\}$, and thus, together with the red edge $v_1v_2$, yield an embedding of $T$. Therefore, there are no red edges between different parts. Now we move to prove the second part of~\eqref{umbreon:3}.

Set $d=D\log^4n/20$. We will show now that $G_R[V_i]$ is an $(|V_i|/2d,|V_i|/2d,d)$-expander for each $i\in[r]$. Indeed, given $X\subseteq V_i$, with $1\le |X|\leq C/p$, by~\eqref{jynx} we get  $|N_R(X)\cap V_i|\geq d|X|+1$. For $C/p\leq |X| \leq |V_i|/2d$, by \eqref{alakazam} we have that
\[|N_R(X)\cap V_i|\geq (1-\alpha)|V_i|-|X|\geq d|X|+1,\]
 since $\alpha <1/2$. To show the second expansion property, suppose that there exists a pair of disjoint sets $X,Y\subseteq V_i$, with $|X|=|Y|=|V_i|/2d$, such that $e_R(X,Y)=0$. By Lemma~\ref{Janson}, with high probability there is a copy of $K_{r+1}$ with one vertex in each of the sets $X,Y$ and the $V_j$'s with $j\neq i$ (we can apply Janson's inequality since $|V_i|/2d = \Omega (N/\log^4N)$). This is a contradiction and therefore $G_R[V_i]$ is an $(|V_i|/2d,|V_i|2d,d)$-expander. Now, Theorem~\ref{mont} implies that $G_R[V_i]$ contains all spanning trees with maximum degree bounded by $D$ and at most $|V_i|/d$ leaves.

For trees with at least $|V_i|/d$ leaves, we know that $G_R[V_i]$ is a weak $(|V_i|/2d,|V_i|/2d)$-expander, and so we only need to show that it is also a weak $(C/p,|V_i|/32D)$-expander. But this is already guaranteed by~\eqref{alakazam} since $\alpha\leq 1/32D$. Now, Theorem~\ref{manyleaves} implies that  $G_R[V_i]$ is $\mathcal{T}(|V_i|-C/p,D)$-universal.
\end{proof}

Now we are ready to prove Theorem \ref{finalstab}.
\begin{proof}[Proof of Theorem~\ref{finalstab}]
We apply Proposition~\ref{umbreon}, with parameters $r$ and $D$, to get $\delta_1, \varepsilon, C, C'_1$, and let $\alpha\leq 1/6D$ be sufficiently small. Without loss of generalisation, we assume that $0<\eps\le \alpha/r$ and apply Proposition~\ref{jolteon}, with parameters $\varepsilon^2/4$ and $\alpha$, to get $C'_2$ and $\delta_2$. Let $C'_3$ be given by Lemma~\ref{p3} and set $C'_4= 10^5r^2$. Finally, we set $\delta = \min \{\delta_1,\delta_2\}$ and $C'=\max\{C'_1,C'_2,C'_3,C'_4\}$, and consider $N\geq(1-\delta)rn$ and $p\geq C'N^{-2/(r+2)}$.

By Proposition~\ref{jolteon}, with high probability, if  $K_{r+1}\nsubseteq G_B$ and if $G_R$ is not $\mathcal{T}(n,D)$-universal, then there exists a partition $V(G)=V_0\cup V_1\cup\dots \cup V_r$ such that $|V_0|\leq \alpha n$, and for each $i\in [r]$ we have $||V_i|-n|\leq \alpha n$ and $e_B(V_i)\leq \varepsilon^2pN^2/4$ . We want to define a new partition by removing from each $V_i$ a set of ``bad" vertices. First,  for $i\in [r]$ let $B_i$ be the set of those vertices $v\in V_i$ having at least $\eps pN$ blue neighbours in $V_i$ and set $B=B_1\cup\dots\cup B_r$. Secondly, let $B'$ be the set of those vertices $v\in V(G)$ such that $d(v,V_i\setminus B)\le pN/16r$ for some $i\in [r$].

Let $V(G)=W_0\cup W_1\dots\cup W_r$ be the partition defined by $W_i=V_i\setminus (B\cup B')$ for $i\in [r]$ and $W_0=V(G)\setminus( W_1\cup\dots \cup W_r)$. We will show that this partition is $\varepsilon$-good. Since $e_B(V_i)\leq \varepsilon^2pN^2/4$, a double counting argument shows that $|B\cap V_i|\leq \varepsilon N/2$ and thus $|V_i\setminus B|\ge |V_i|-\eps N/2\ge (1-2\alpha)N/r$ as $\eps\le \alpha /r$.
By Lemma~\ref{p2}, there are at most $128r/p$ vertices of $G$ with less than $pN/16r$ neighbours in $V_i\setminus B$. Then we have 
\[|W_i| \geq (1-2\alpha)\dfrac{N}{r} - \dfrac{128r^2}{p} 
\geq (1-3\alpha)\dfrac{N}{r}\geq \left(1-\dfrac{1}{2D}\right)\dfrac{N}{r}.\]
By definition of $W_i$, each vertex $v\in W_i$ satisfies $d_B(v,W_i)\le \eps pN$. On the other hand, for $v\in W_i$ we have
\[d_R(u,W_i)\geq \dfrac{pN}{16r}-{\varepsilon pN} - \dfrac{128r^2}{p}\geq \dfrac{pN}{32r},\]
where we used that $\varepsilon\leq 1/20r$ and  $pN\geq C_4/p$. To finish the proof, take an  $\varepsilon$-good partition $V(G)=U_0\cup U_1\cup\dots\cup U_r$ such that $W_i\subseteq U_i$ for $i\in[r]$ and that minimizes $|U_0|$. We will prove that if $U_0\nsubseteq B'$, then this partition would not be maximal. By contradiction, suppose there exists $u\in U_0\setminus B'$. If $d_B(u,U_i)\geq \varepsilon pN$ for all $i\in [r]$, then by Lemma~\ref{p3} we can find a blue copy of $K_{r+1}$ containing $u$, which is not possible. Then there must exist some $i\in [r]$ such that $d_R(u,U_i)\geq pN/32r$, in which case we update $U_i:=U_i\cup \{u\}$. We claim that $V(G)=U_0\cup U_1\cup\dots \cup U_r$ is still $\varepsilon$-good. Since the blue degree of each vertex in $U_i\setminus\{u\}$ grows in at most 1, it follows that the new partition is $2\eps$-good. This fact and Proposition~\ref{umbreon} imply that $e_R(U_i,U_j)=0$ for every $1\leq i<j\leq r$. Finally, we may use Lemma~\ref{p3} as before to show that the maximum blue degree inside each part is at most $\varepsilon pN$, which makes this partition $\varepsilon$-good. This contradicts the maximality of the initial partition and thus $U_0\subseteq B'$. In particular, we have $|U_0|\le |B'|\le 128r/p$. Note that if $|U_i|>(n+C/p)$ for some $i\in [r]$, then, by Proposition~\ref{umbreon}, $G_R[U_i]$ contains all trees with maximum degree at most $D$ and                                                                              $|U_i|-C/p\ge n$ edges, which is a contradiction. This finishes the proof.

\end{proof}

\bibliographystyle{acm}
\bibliography{bib}

\begin{thebibliography}{10}

\bibitem{Samotij}
{\sc Balogh, J., Csaba, B., and Samotij, W.}
\newblock Local resilience of almost spanning trees in random graphs.
\newblock {\em Random Structures \& Algorithms 38}, 1-2 (2011), 121--139.

\bibitem{BPS1}
{\sc {Besomi}, G., {Pavez-Sign{\'e}}, M., and {Stein}, M.}
\newblock {Degree conditions for embedding trees}.
\newblock {\em SIAM J. Discrete Math. 33}, 3 (2019), 1521--1555.

\bibitem{BPS3}
{\sc {Besomi}, G., {Pavez-Sign{\'e}}, M., and {Stein}, M.}
\newblock {On the Erd\H{o}s-S\'os conjecture for trees with bounded degree}.
\newblock {\em To appear in Combin. Probab. Comput.\/} (2021).

\bibitem{botsurvey}
{\sc B{\"o}ttcher, J.}
\newblock {\em Large-scale structures in random graphs}.
\newblock Surveys in {C}ombinatorics 2017 (London Mathematical Society Lecture
  Note Series). Cambridge University Press, 2017, pp.~87--140.

\bibitem{burr1981ramsey}
{\sc Burr, S.}
\newblock Ramsey numbers involving graphs with long suspended paths.
\newblock {\em J. Lond. Math. Soc. 2}, 3 (1981), 405--413.

\bibitem{burrerdos}
{\sc Burr, S.~A., and Erd\H{o}s, P.}
\newblock Generalizations of a {R}amsey-theoretic result of {C}hv{\'a}tal.
\newblock {\em J. Graph Theory 7}, 1 (1983), 39--51.

\bibitem{chvatal1977tree}
{\sc Chv{\'a}tal, V.}
\newblock Tree-complete graph {R}amsey numbers.
\newblock {\em J. Graph Theory 1}, 1 (1977), 93--93.

\bibitem{conlongowers}
{\sc Conlon, D., and Gowers, W.~T.}
\newblock Combinatorial theorems in sparse random sets.
\newblock {\em Ann. of Math.\/} (2016), 367--454.

\bibitem{Erdos64}
{\sc Erd\H{o}s, P.}
\newblock Extremal problems in graph theory.
\newblock In {\em Theory of graphs and its applications, Proc. Sympos.
  Smolenice\/} (1964), pp.~29--36.

\bibitem{erdHos1960evolution}
{\sc Erd{\H{o}}s, P., and R{\'e}nyi, A.}
\newblock On the evolution of random graphs.
\newblock {\em Publ. Math. Inst. Hung. Acad. Sci 5}, 1 (1960), 17--60.

\bibitem{franklrodl}
{\sc Frankl, P., and R{\"o}dl, V.}
\newblock Large triangle-free subgraphs in graphs without ${K}_4$.
\newblock {\em Graphs Combin. 2}, 1 (1986), 135--144.

\bibitem{friedgut2010ramsey}
{\sc Friedgut, E., R{\"o}dl, V., and Schacht, M.}
\newblock Ramsey properties of random discrete structures.
\newblock {\em Random Structures \& Algorithms 37}, 4 (2010), 407--436.

\bibitem{pip}
{\sc Friedman, J., and Pippenger, N.}
\newblock Expanding graphs contain all small trees.
\newblock {\em Combinatorica 7}, 1 (1987), 71--76.

\bibitem{gerke_steger_2005}
{\sc Gerke, S., and Steger, A.}
\newblock {\em The sparse regularity lemma and its applications}.
\newblock Surveys in {C}ombinatorics 2005 (London Mathematical Society Lecture
  Note Series). Cambridge University Press, 2005, p.~227–258.

\bibitem{haxell}
{\sc Haxell, P.~E.}
\newblock Tree embeddings.
\newblock {\em J. Graph Theory 36}, 3 (2001), 121--130.

\bibitem{rucinskibook}
{\sc Janson, S., {\L}uczak, T., and Ruci\'nski, A.}
\newblock {\em Random graphs}, vol.~45.
\newblock John Wiley \& Sons, 2011.

\bibitem{reglemma}
{\sc Kohayakawa, Y.}
\newblock Szemer{\'e}di’s regularity lemma for sparse graphs.
\newblock In {\em Found. Comput. Math.} Springer, 1997, pp.~216--230.

\bibitem{yoshi}
{\sc Kohayakawa, Y., and Kreuter, B.}
\newblock Threshold functions for asymmetric {R}amsey properties involving
  cycles.
\newblock {\em Random Structures \& Algorithms 11}, 3 (1997), 245--276.

\bibitem{kohayakawa2019monochromatic}
{\sc Kohayakawa, Y., Mota, G.~O., and Schacht, M.}
\newblock Monochromatic trees in random graphs.
\newblock {\em Math. Proc. Philos. Soc. 166}, 1 (2019), 191--208.

\bibitem{rodlregularity}
{\sc Kohayakawa, Y., and R{\"o}dl, V.}
\newblock Szemer{\'e}di’s regularity lemma and quasi-randomness.
\newblock In {\em Recent advances in algorithms and combinatorics}. Springer,
  2003, pp.~289--351.

\bibitem{KSS1995}
{\sc Komlós, J., Sárközy, G.~N., and Szemerédi, E.}
\newblock Proof of a packing conjecture of {B}ollobás.
\newblock {\em Combin. Probab. Comput. 4}, 3 (1995), 241–255.

\bibitem{letzter2016path}
{\sc Letzter, S.}
\newblock Path {R}amsey number for random graphs.
\newblock {\em Combin. Probab. Comput. 25}, 4 (2016), 612--622.

\bibitem{voigt}
{\sc {\L}uczak, T., Ruci{\'n}ski, A., and Voigt, B.}
\newblock Ramsey properties of random graphs.
\newblock {\em J. Combin. Theory Ser. B 56}, 1 (1992), 55--68.

\bibitem{montgomery}
{\sc Montgomery, R.}
\newblock Embedding bounded degree spanning trees in random graphs.
\newblock {\em arXiv preprint arXiv:1405.6559\/} (2014).

\bibitem{luiz}
{\sc Moreira, L.}
\newblock Ramsey goodness of paths in random graphs.
\newblock {\em arXiv preprint\/} (2019).

\bibitem{mousset2018towards}
{\sc Mousset, F., Nenadov, R., and Samotij, W.}
\newblock Towards the {K}ohayakawa--{K}reuter conjecture on asymmetric {R}amsey
  properties.
\newblock {\em Combin. Probab. Comput. 29}, 6 (2020), 943–955.

\bibitem{RR1}
{\sc R{\"o}dl, V., and Ruci\'nski, A.}
\newblock Lower bounds on probability thresholds for {R}amsey properties.
\newblock {\em Combinatorics, Paul Erd\H os is eighty 1\/} (1993), 317--346.

\bibitem{RR2}
{\sc R{\"o}dl, V., and Ruci{\'n}ski, A.}
\newblock Threshold functions for {R}amsey properties.
\newblock {\em J. Amer. Math. Soc. 8}, 4 (1995), 917--942.

\bibitem{schacht}
{\sc Schacht, M.}
\newblock Extremal results for random discrete structures.
\newblock {\em Ann. of Math.\/} (2016), 333--365.

\end{thebibliography}

\end{document}